\newtheorem{theorem}{Theorem}[section]
\newtheorem{lemma}[theorem]{Lemma}
\newtheorem{corollary}[theorem]{Corollary}
\theoremstyle{definition}
\newtheorem{remark}[theorem]{Remark}
\numberwithin{equation}{section}
\numberwithin{table}{section}
\numberwithin{figure}{section}
\title{Nodal auxiliary a posteriori error estimates}
\author{Yuwen Li}
\address{School of Mathematical Sciences, Zhejiang University, Hangzhou, Zhejiang 310058, China.}
\email{liyuwen@zju.edu.cn}
\author{Ludmil T. Zikatanov}
\address{Department of Mathematics, The Pennsylvania State University, University Park, PA 16802, USA.}
\email{ludmil.math@gmail.com}
\begin{document}

\begin{abstract}
  We introduce and explain key relations between a posteriori error
  estimates and subspace correction methods viewed as preconditioners
  for problems in infinite dimensional Hilbert spaces. We set the
  stage using the Finite Element Exterior Calculus and Nodal Auxiliary
  Space Preconditioning. This framework provides a systematic way to derive explicit residual estimators and estimators based on local
  problems which are upper and lower bounds of the true error. We show the applications to discretizations of curl-curl, grad-div, Hodge Laplacian problems, and linear elasticity. For singularly perturbed $H({\rm curl})$ and $H({\rm div})$ problems, we
  also obtain novel parameter-independent error estimators. The only ingredients needed are:
  well-posedness of the problem and the existence of regular
  decomposition on continuous level.
% \PACS{PACS code1 \and PACS code2 \and more}
%\subclass{MSC 65N30 \and MSC 65N12}
\end{abstract}

\maketitle

\section{Introduction}
Adaptive mesh refinement is an indispensable component in the design
of competitive numerical models, as it can resolve singularities and sharp gradients in
the solution and often can provide optimal discretizations with
respect to computational complexity and accuracy. Adaptivity is a broad and sophisticated research area that involves many technical ingredients. In this paper we focus on a posteriori error estimates, the building block of adaptive mesh refinement and error control.  

Our objective is on deriving a posteriori error estimators by relating
the error control in adaptive methods to operator preconditioning
techniques (cf.~\cite{GriebelOswald1995,ToselliWidlund2005,Xu1992,XuZikatanov2002}) for iterative methods. This leads to simple and transparent analysis and to new a
posteriori error estimation techniques. We choose to work within the Finite Element Exterior
Calculus (FEEC)~\cite{ArnoldFalkWinther2006,ArnoldFalkWinther2010,Hiptmair1999MCOM}, because it is a general
framework which allows for clear and concise presentation of the
results.

For stationary problems, adaptive finite element methods (AFEMs) is a 
quite mature field, and we refer to the classical texts in
adaptivity~\cite{AinsworthOden2000,Bank1996,BabuskaStrouboulis2001,NochettoSiebertVeeser2009,Repin2008,Verfurth2013} for
basic techniques and references. A posteriori error analysis of
$H(\operatorname{curl})$ and $H(\operatorname{div})$, and in general
$H({\rm d})$ problems, where ${\rm d}$ is an exterior derivative, has been a topic
of intensive research in the last two decades. Related to our work is
\cite{BHHW2000}, where a posteriori error estimate for an eddy current
curl-curl problem is proposed. The main tool there is a Helmholtz
decomposition and the resulting estimator hinges on material
parameters and convexity of the physical domain.  An improved analysis
of this estimator is presented in~\cite{Schoberl2008}, which shows
parameter-independent reliability of that estimator on Lipschitz
domains with the help
of local regular decomposition and regularized interpolation commuting
with the gradient, curl and divergence. These works were further generalized 
on de Rham complexes \cite{DemlowHirani2014} in FEEC, where new estimators controlling the error of discrete Hodge Laplacian are derived based on global regular
decomposition and commuting interpolation. Later several  error estimators for decoupled errors of the discrete Hodge Laplacian could be found in \cite{ChenWu2017,Li2019SINUM}.

To motivate the study of the relations between a posteriori error
estimates and preconditioning, we make the following simple
observation: both methods aim at developing ``computable'' approximations to the action of solution operators on some residuals living in an infinite- or finite-dimensional space
(see~\cite{LiZikatanov2021CAMWA} for an example of such techniques in the symmetric
and positive definite case). It is therefore natural to find
connections between these numerical techniques and explore such interaction to construct estimators. We shall show that  in this way two-sided and robust a posteriori estimators can be derived for a wide
class of partial differential equations posed on de Rham complexes.

To confirm the reliability and efficiency of our a posteriori error estimators, we require two main ingredients: (1) solvability
of the continuous problem (continuous inf-sup condition); and (2) existence of an $H^1$ regular
decomposition (cf.~\cite{BS1987,DHL1999,Hiptmair2002,PasciakZhao2002}) of vector  fields (again, only on the continuous level). With these tools
we are able to derive classical and new error estimators for any well-posed problems on
de Rham complexes, including the positive definite curl-curl and grad-div problems, as well as indefinite Hodge Laplacian and linear elasticity. Moreover,
we construct a regular decomposition designed for singularly
perturbed $H({\rm d})$ norms. This decomposition suggests a new and robust two-sided error estimator for $H({\rm d})$ problems w.r.t.~material parameters. As far as we know, such robust estimators even for $H({\rm curl})$ and $H({\rm div})$ problems could not be found in the literature. 

A key idea borrowed from preconditioning theory is the methodology of Nodal Auxiliary
Space preconditioning \cite{HiptmairXu2007}, which reduces solution process of complex discrete $H({\rm curl})$ and $H({\rm div})$ problems to  solving several Poisson-type problems discretized by nodal elements; see also \cite{Li2024FoCM} for its generalization on surfaces. Meanwhile we note that a posteriori error estimation is equivalent to preconditioning the Riesz representation $\mathcal{B}_\mathcal{V}$ of the underlying dual Hilbert space $\mathcal{V}^\prime$. With the help of $H^1$ regular decomposition and fictitious space lemma \cite{Nepomnyaschikh1992}, $\mathcal{B}_\mathcal{V}$ is shown to be spectrally equivalent to an operator formed by Riesz representations of several $H^{-1}$ spaces. Therefore, estimating residuals in $\mathcal{V}^\prime$ reduces to a posteriori estimates of $H^{-1}$ residuals, which has been well established in the literature. The same approach works for $H({\rm d})$ singularly perturbed problems by reducing it to a posteriori error estimates of the classical  singularly perturbed $H^1$ reaction diffusion equation. Our framework directly generalizes to indefinite systems because the Riesz representation of underlying infinite-dimensional space in that case is simply the Cartesian product of representations of a few well-studied spaces.

Compared with $H(\operatorname{curl})$, a posteriori error analysis of $H(\operatorname{div})$ was initiated earlier because of its application in mixed methods of standard elliptic problems. Some early works in that direction could be found in~\cite{BV1996,Carstensen1997} where mesh dependent norms, quasi-uniform meshes, and Helmholtz
decomposition are used. Error estimators for controlling the $L^2$ flux error are presented in \cite{Alonso1996,Carstensen2005,HolstLiMihalikSzypowski2020,HuangXu2012}. It is noted that a fully general error estimator and a convergent adaptive algorithm are derived in \cite{CNS2007} for the positive definite $H({\rm div})$ problem. Finally we remark that residual estimators is not the whole story of a posteriori error estimation. Readers are referred to  \cite{AinsworthOden2000,Bank1996,BankXu2003b,BraessSchoberl2008,ErnVohralik2015,Repin2008} and references therein for hierarchical basis, equilibrated, superconvergent recovery, and functional error estimators. 

The rest of the paper is organized as follows. In
Section~\ref{secab} we give some standard notation and preliminaries
and link preconditioning with error estimation. Section~\ref{secderham}
introduces the basic FEEC notation and the minimum needed to set up finite element discretizations in the language of differential forms.  Section~\ref{secHgrad}
is devoted to the simplest case of error estimation for $H^{-1}$ residuals, the building block of our framework. Next, in Section~\ref{secHd} we give the
construction of estimators for the abstract $H({\rm d})$ problems, including singularly
perturbed, curl-curl, grad-div problems. We extend such results to product of Hilbert spaces in~Section \ref{secsaddlepoint} to present a
posteriori estimation for mixed  Hodge Laplacian and the linear elasticity with weakly
imposed symmetry.

%%%%%%%%%%%%%%%%END INTRO
\section{Abstract framework}\label{secab}

\subsection{Preliminaries}
For an arbitrary Hilbert space $\mathcal{V}$, let $(\cdot,\cdot)_\mathcal{V}$ denote its inner product, $\|\cdot\|_\mathcal{V}$ the $\mathcal{V}$-norm, $\mathcal{V}^\prime$ the dual space of $\mathcal{V},$ $\text{id}_\mathcal{V}$ the identity mapping on $\mathcal{V}.$ Let $\mathcal{B}_\mathcal{V}: \mathcal{V}^\prime\rightarrow\mathcal{V}$ be the Riesz representation of $\mathcal{V}^\prime$, namely,
$$(v,\mathcal{B}_{\mathcal{V}}r)_\mathcal{V}=\langle r,v\rangle,\quad\forall r\in \mathcal{V}^\prime,~\forall v\in \mathcal{V}.$$
Here $\langle\cdot,\cdot\rangle$ is the duality pairing between $\mathcal{V}'$ and $\mathcal{V}$. 
For a linear operator $g: \mathcal{V}_1\rightarrow\mathcal{V}_2$, its adjoint is denoted as $g^*: \mathcal{V}_2^\prime\rightarrow\mathcal{V}_1^\prime.$ We use $[\mathcal{V}]^i$ (resp.~$[\mathcal{V}]^{i\times j}$) to denote the Hilbert space of vectors (resp.~matrices with $i$ rows and $j$ columns) whose entries are contained in $\mathcal{V}.$

Given $f\in \mathcal{V}'$, consider the variational problem: $u\in\mathcal{V}$ such that \begin{equation}\label{var}
a(u,v)=\langle f,v\rangle,\quad\forall v\in\mathcal{V}.
\end{equation}
We assume that the bilinear form $a(\cdot,\cdot)$ is symmetric, bounded as follows
\begin{equation}\label{bdd}
\sup_{ 0\neq v \in \mathcal{V}}\sup_{0\neq w\in\mathcal{V}} \frac{a(v,w)}{\|v\|_\mathcal{V}\|w\|_\mathcal{V}}:=\alpha<\infty,
\end{equation}
and satisfies the following inf-sup condition
\begin{equation}\label{infsup}
\inf_{0\neq v\in \mathcal{V}}\sup_{0\neq w\in\mathcal{V}} \frac{a(v,w)}{\|v\|_\mathcal{V}\|w\|_\mathcal{V}}:=\beta>0.
\end{equation}
The bilinear form $a$ induces a bounded isomorphism $\mathcal{A}: \mathcal{V}\rightarrow \mathcal{V}'$ by
\begin{equation*}
\langle\mathcal{A}v, w  \rangle:= a(v,w),\quad\forall v,w\in \mathcal{V}.
\end{equation*}
Hence \eqref{var} is equivalent to the operator equation
\begin{equation}\label{operator}
    \mathcal{A}u=f.
\end{equation}
Obviously 
\eqref{bdd} and \eqref{infsup} are equivalent to that the operator norms satisfy
\begin{equation}\label{AAinvbd}
    \|\mathcal{A}\|_{\mathcal{V}\to\mathcal{V}^\prime}=\alpha, \quad \|\mathcal{A}^{-1}\|_{\mathcal{V}^{\prime}\to\mathcal{V}}=\beta^{-1}.
\end{equation}

%\subsection{Abstract approximation from a subspace}
Let us consider the approximation to~\eqref{var} by restricting it to a subspace $\mathcal{V}_h\subset\mathcal{V}$, namely: Find $u_h\in\mathcal{V}_h$ such that
\begin{equation}\label{disvar}
a(u_h,v)=\langle f,v\rangle,\quad\forall v\in\mathcal{V}_h.
\end{equation}
We assume that the discrete analogue of \eqref{infsup} is satisfied on $\mathcal{V}_h$, i.e.,
\begin{equation*}
\inf_{0\neq v\in \mathcal{V}_h}\sup_{0\neq w\in\mathcal{V}_h} \frac{a(v,w)}{\|v\|_\mathcal{V}\|w\|_\mathcal{V}}:= \beta_h>0.
\end{equation*}
It then follows from the Babu\v{s}ka--Brezzi theory that \eqref{disvar} admits a unique solution. 
In practice, $\mathcal{V}_h$ could be a finite element space based on a triangulation of a space domain.

\subsection{A posteriori error estimates by preconditioning}
We are interested in a posteriori error estimates of the form
\begin{align*}
    C_1\eta_h\leq\|u-u_h\|_\mathcal{V}\leq C_2\eta_h,
\end{align*}
where $\eta_h$ is  computable and $C_1, C_2$ are absolute constants. In finite element methods, $\eta_h$ can be used for adaptive mesh refinement. Let
\begin{align*}
    e&:=u-u_h\in\mathcal{V},\\
    \mathcal{R}&:=f-\mathcal{A}u_h\in\mathcal{V}^\prime.
\end{align*}
The variational formulation \eqref{disvar} implies 
\begin{equation*}
a(e,v)=\langle\mathcal{R},v\rangle,\quad\forall v\in\mathcal{V}.
\end{equation*}
or $\mathcal{A}e=\mathcal{R}$ in the operator form.

A posteriori error estimation then is to bound the norm of the true
error $e=\mathcal{A}^{-1}\mathcal{R}$ by computable error
indicators. Direct computation of $\|\mathcal{A}^{-1}\mathcal{R}\|_\mathcal{V}$
will be, in general, impossible or too expensive, since it requires to
compute the action of $\mathcal{A}^{-1}$ on $\mathcal{R}$.  Approximating such
action is known as \emph{preconditioning}; see, e.g.,
\cite{GriebelOswald1995,ToselliWidlund2005,Xu1992,XuZikatanov2002} for the
abstract frameworks on preconditioning. We now borrow some ideas
from this field and extend our results for elliptic problems \cite{LiZikatanov2021CAMWA} on $H^1$
to a posteriori error estimation of the possibly indefinite system
\eqref{disvar} with a more general space $\mathcal{V}$.

First, we need a bounded isomorphism (a \emph{preconditioner}) $\mathcal{B}: \mathcal{V}'\to\mathcal{V}$. Here $\mathcal{B}$ is assumed to be symmetric and positive definite (SPD), i.e., $\langle\cdot,\mathcal{B}\cdot\rangle$ is an inner product on $\mathcal{V}^\prime$, and the induced norm is denoted by $\|\cdot\|_{\mathcal{B}}$. Similarly, $\langle\mathcal{B}^{-1}\cdot,\cdot\rangle$ is an inner product on $\mathcal{V}$ and the induced norm is denoted as $\|\cdot\|_{\mathcal{B}^{-1}}.$
We say $\mathcal{B}$ is a preconditioner for $\mathcal{A}$ provided $\mathcal{B}\mathcal{A}: \mathcal{V}\rightarrow\mathcal{V}$ is a bounded isomorphism, i.e.,
\begin{equation*}
  \|\mathcal{B}\mathcal{A}\|_{\mathcal{B}^{-1}}<\infty,\quad\|(\mathcal{B}\mathcal{A})^{-1}\|_{\mathcal{B}^{-1}}<\infty.
\end{equation*}

\subsection{Estimating  the residual}
The next lemma shows that the existence of a preconditioner $\mathcal{B}$ naturally yields a two-sided estimate on $\|e\|_{\mathcal{B}^{-1}}$.
\begin{lemma}\label{reserror}
We have the following two sided bound
\begin{equation*}
\|\mathcal{B}\mathcal{A}\|_{\mathcal{B}^{-1}}^{-1}\|\mathcal{R}\|_\mathcal{B}\leq \|e\|_{\mathcal{B}^{-1}}\leq \|(\mathcal{B}\mathcal{A})^{-1}\|_{\mathcal{B}^{-1}}\|\mathcal{R}\|_\mathcal{B}.
\end{equation*}
\end{lemma}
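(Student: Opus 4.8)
The plan is to derive both inequalities from the operator identity $e=\mathcal{A}^{-1}\mathcal{R}$, rewritten in terms of the preconditioned operator $\mathcal{B}\mathcal{A}$, and then to exploit the fact that $\mathcal{B}$ is an isometry from $(\mathcal{V}',\|\cdot\|_{\mathcal{B}})$ onto $(\mathcal{V},\|\cdot\|_{\mathcal{B}^{-1}})$. Concretely, first I would observe that for any $r\in\mathcal{V}'$ one has $\|r\|_{\mathcal{B}}^2=\langle r,\mathcal{B}r\rangle=\langle \mathcal{B}^{-1}(\mathcal{B}r),\mathcal{B}r\rangle=\|\mathcal{B}r\|_{\mathcal{B}^{-1}}^2$, so $\mathcal{B}$ preserves norms between the two spaces. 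Applying $\mathcal{B}$ to both sides of $\mathcal{A}e=\mathcal{R}$ gives $\mathcal{B}\mathcal{A}e=\mathcal{B}\mathcal{R}$, hence $\|\mathcal{B}\mathcal{R}\|_{\mathcal{B}^{-1}}=\|\mathcal{R}\|_{\mathcal{B}}$ and it remains to compare $\|e\|_{\mathcal{B}^{-1}}$ with $\|\mathcal{B}\mathcal{A}e\|_{\mathcal{B}^{-1}}$.

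For the upper bound on $\|e\|_{\mathcal{B}^{-1}}$: since $\mathcal{B}\mathcal{A}:\mathcal{V}\to\mathcal{V}$ is a bounded isomorphism, we have $e=(\mathcal{B}\mathcal{A})^{-1}(\mathcal{B}\mathcal{A}e)=(\mathcal{B}\mathcal{A})^{-1}\mathcal{B}\mathcal{R}$, so
\begin{equation*}
\|e\|_{\mathcal{B}^{-1}}\leq \|(\mathcal{B}\mathcal{A})^{-1}\|_{\mathcal{B}^{-1}}\,\|\mathcal{B}\mathcal{R}\|_{\mathcal{B}^{-1}}=\|(\mathcal{B}\mathcal{A})^{-1}\|_{\mathcal{B}^{-1}}\,\|\mathcal{R}\|_{\mathcal{B}},
\end{equation*}
where $\|\cdot\|_{\mathcal{B}^{-1}}$ in the middle term denotes the operator norm of $(\mathcal{B}\mathcal{A})^{-1}$ with respect to the $\|\cdot\|_{\mathcal{B}^{-1}}$ norm on $\mathcal{V}$. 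For the lower bound: from $\mathcal{B}\mathcal{A}e=\mathcal{B}\mathcal{R}$ and boundedness of $\mathcal{B}\mathcal{A}$ we get $\|\mathcal{R}\|_{\mathcal{B}}=\|\mathcal{B}\mathcal{R}\|_{\mathcal{B}^{-1}}=\|\mathcal{B}\mathcal{A}e\|_{\mathcal{B}^{-1}}\leq \|\mathcal{B}\mathcal{A}\|_{\mathcal{B}^{-1}}\,\|e\|_{\mathcal{B}^{-1}}$, which upon rearranging is exactly the claimed left inequality. Combining the two displays finishes the proof.

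The only point requiring a little care — and what I expect to be the sole genuine obstacle — is the isometry claim $\|\mathcal{B}r\|_{\mathcal{B}^{-1}}=\|r\|_{\mathcal{B}}$, i.e., making sure the two norms introduced via $\langle\cdot,\mathcal{B}\cdot\rangle$ on $\mathcal{V}'$ and $\langle\mathcal{B}^{-1}\cdot,\cdot\rangle$ on $\mathcal{V}$ are compatible under the map $\mathcal{B}$. This is immediate once one writes $\langle\mathcal{B}^{-1}(\mathcal{B}r),\mathcal{B}r\rangle$ and cancels $\mathcal{B}^{-1}\mathcal{B}=\mathrm{id}$, using the symmetry of $\mathcal{B}$ (so that the pairing is genuinely an inner product and the substitution is legitimate); everything else is a routine application of the definition of operator norm. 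No regularity or inf-sup hypotheses beyond the standing assumption that $\mathcal{B}$ is an SPD bounded isomorphism and that $\mathcal{B}\mathcal{A}$ is a bounded isomorphism are needed.
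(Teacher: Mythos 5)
Your proof is correct and follows essentially the same route as the paper: both arguments rewrite $e=(\mathcal{B}\mathcal{A})^{-1}\mathcal{B}\mathcal{R}$ for the upper bound and $\mathcal{B}\mathcal{R}=\mathcal{B}\mathcal{A}e$ for the lower bound, then invoke the operator norms of $\mathcal{B}\mathcal{A}$ and its inverse. Your explicit verification of the isometry $\|\mathcal{B}r\|_{\mathcal{B}^{-1}}=\|r\|_{\mathcal{B}}$ is a detail the paper uses implicitly, so nothing is missing.
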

\begin{proof}
Using the relation $e=\mathcal{A}^{-1}\mathcal{R}$, we have
\begin{align*}
    \|e\|_{\mathcal{B}^{-1}}=\|\mathcal{A}^{-1}\mathcal{B}^{-1}\mathcal{B}\mathcal{R}\|_{\mathcal{B}^{-1}}\leq\|(\mathcal{B}\mathcal{A})^{-1}\|_{\mathcal{B}^{-1}}\|\mathcal{B}\mathcal{R}\|_{\mathcal{B}^{-1}}.
\end{align*}
On the other hand, 
\begin{align*}
    \|\mathcal{R}\|_{\mathcal{B}}=\|\mathcal{B}\mathcal{A}e\|_{\mathcal{B}^{-1}}\leq\|\mathcal{B}\mathcal{A}\|_{\mathcal{B}^{-1}}\|e\|_{\mathcal{B}^{-1}}.
\end{align*}
The proof is complete.
\end{proof}

Hence we obtain an error estimator provided  the norm of the residual $\|\mathcal{R}\|_\mathcal{B}$ can be efficiently approximated.
It turns out that the \emph{fictitious space lemma} (see~\cite{Nepomnyaschikh1992}) is a natural way to
estimate the efficiency of a preconditioner and will be a powerful
tool in the current framework. %We state this result next.
\begin{lemma}[Fictitious Space Lemma]\label{FSP}
Let $\bar{\mathcal{V}}$ be a Hilbert space, $\bar{\mathcal{B}}: \bar{\mathcal{V}}^\prime\rightarrow \bar{\mathcal{V}}$ be a SPD operator.
Assume $\Pi: \bar{\mathcal{V}}\rightarrow\mathcal{V}$ is a surjective linear operator, and 
\begin{itemize}
\item For any $\bar{v}\in\bar{\mathcal{V}},$ it holds that $\|\Pi\bar{v}\|_{\mathcal{B}^{-1}}\leq c_0\|\bar{v}\|_{\bar{\mathcal{B}}^{-1}}$,
    \item 
For each $v\in\mathcal{V}$, there exists $\bar{v}\in \bar{\mathcal{V}}$ such that 
$$\Pi\bar{v}=v,\quad\|\bar{v}\|_{\bar{\mathcal{B}}^{-1}}\leq c_1\|v\|_{\mathcal{B}^{-1}}.$$
\end{itemize}
Then for $\widetilde{\mathcal{B}}:=\Pi\bar{\mathcal{B}}\Pi^*$ we have
\begin{equation*}
    c_0^{-2}\langle r,\widetilde{\mathcal{B}}r\rangle\leq \langle r,\mathcal{B}r\rangle\leq c_1^2\langle r,\widetilde{\mathcal{B}}r\rangle,\quad\forall r\in\mathcal{V}^\prime.
\end{equation*}
\end{lemma}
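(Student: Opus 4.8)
The plan is to work directly from the definitions of the operators and norms involved, reducing everything to the defining property of a Riesz representation. First I would record what $\widetilde{\mathcal B}=\Pi\bar{\mathcal B}\Pi^*$ means as an operator $\mathcal V'\to\mathcal V$: for $r\in\mathcal V'$ we have $\Pi^* r\in\bar{\mathcal V}'$, then $\bar{\mathcal B}\Pi^* r\in\bar{\mathcal V}$, and finally $\Pi(\bar{\mathcal B}\Pi^* r)\in\mathcal V$. I would then compute the quadratic form: for $r\in\mathcal V'$,
\[
\langle r,\widetilde{\mathcal B}r\rangle=\langle r,\Pi\bar{\mathcal B}\Pi^* r\rangle=\langle\Pi^* r,\bar{\mathcal B}\Pi^* r\rangle=\|\Pi^* r\|_{\bar{\mathcal B}}^2,
\]
using the definition of the adjoint $\Pi^*$ and then the $\bar{\mathcal B}$-inner product on $\bar{\mathcal V}'$. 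So $\widetilde{\mathcal B}$ is SPD precisely when $\Pi^*$ is injective, which follows from the surjectivity of $\Pi$; this also shows $\langle r,\widetilde{\mathcal B}r\rangle=\sup_{0\neq\bar v\in\bar{\mathcal V}}\langle\Pi^* r,\bar v\rangle^2/\|\bar v\|_{\bar{\mathcal B}^{-1}}^2=\sup_{0\neq\bar v}\langle r,\Pi\bar v\rangle^2/\|\bar v\|_{\bar{\mathcal B}^{-1}}^2$, the dual characterization I will actually use.

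For the upper bound $\langle r,\mathcal B r\rangle\le c_1^2\langle r,\widetilde{\mathcal B}r\rangle$, I would start from the dual characterization $\langle r,\mathcal B r\rangle=\|r\|_{\mathcal B}^2=\sup_{0\neq v\in\mathcal V}\langle r,v\rangle^2/\|v\|_{\mathcal B^{-1}}^2$. Given $v\in\mathcal V$, the second hypothesis supplies $\bar v\in\bar{\mathcal V}$ with $\Pi\bar v=v$ and $\|\bar v\|_{\bar{\mathcal B}^{-1}}\le c_1\|v\|_{\mathcal B^{-1}}$; hence
\[
\frac{\langle r,v\rangle^2}{\|v\|_{\mathcal B^{-1}}^2}=\frac{\langle r,\Pi\bar v\rangle^2}{\|v\|_{\mathcal B^{-1}}^2}\le c_1^2\,\frac{\langle r,\Pi\bar v\rangle^2}{\|\bar v\|_{\bar{\mathcal B}^{-1}}^2}\le c_1^2\,\langle r,\widetilde{\mathcal B}r\rangle,
\]
and taking the supremum over $v$ finishes that direction. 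For the lower bound $c_0^{-2}\langle r,\widetilde{\mathcal B}r\rangle\le\langle r,\mathcal B r\rangle$, I would instead start from $\langle r,\widetilde{\mathcal B}r\rangle=\sup_{0\neq\bar v\in\bar{\mathcal V}}\langle r,\Pi\bar v\rangle^2/\|\bar v\|_{\bar{\mathcal B}^{-1}}^2$ and use the first hypothesis $\|\Pi\bar v\|_{\mathcal B^{-1}}\le c_0\|\bar v\|_{\bar{\mathcal B}^{-1}}$ to get
\[
\frac{\langle r,\Pi\bar v\rangle^2}{\|\bar v\|_{\bar{\mathcal B}^{-1}}^2}\le\frac{\langle r,\Pi\bar v\rangle^2}{c_0^{-2}\|\Pi\bar v\|_{\mathcal B^{-1}}^2}\le c_0^2\sup_{0\neq v\in\mathcal V}\frac{\langle r,v\rangle^2}{\|v\|_{\mathcal B^{-1}}^2}=c_0^2\langle r,\mathcal B r\rangle,
\]
and taking the supremum over $\bar v$ gives the claim.

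The only genuinely delicate point is the interchange between an operator identity like $\langle r,\mathcal B r\rangle$ and its variational (sup) form; this rests on the fact that for an SPD isomorphism $\mathcal C:\mathcal V'\to\mathcal V$ one has $\langle r,\mathcal C r\rangle=\sup_{0\neq v\in\mathcal V}\langle r,v\rangle^2/\langle\mathcal C^{-1}v,v\rangle$, which is just Cauchy--Schwarz in the $\mathcal C$-inner product together with the fact that the supremum is attained at $v=\mathcal C r$. I would state this once as a small observation and then apply it to $\mathcal B$, to $\bar{\mathcal B}$, and to $\widetilde{\mathcal B}$. Beyond that the argument is a routine chain of the two hypotheses and the adjoint relation $\langle\Pi^* r,\bar v\rangle=\langle r,\Pi\bar v\rangle$; no compactness, closed-range, or density arguments are needed since $\Pi$ is assumed surjective and all spaces are Hilbert.
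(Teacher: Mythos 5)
Your argument is correct and complete: the identity $\langle r,\widetilde{\mathcal B}r\rangle=\|\Pi^*r\|_{\bar{\mathcal B}}^2$, the dual (sup) characterization of the $\mathcal B$- and $\bar{\mathcal B}$-norms, and the two hypotheses combine exactly as you describe (the only pedantic point, which you implicitly handle, is that terms with $\Pi\bar v=0$ contribute nothing to the supremum in the lower bound). The paper itself states Lemma~\ref{FSP} without proof, citing Nepomnyaschikh; your proof is the standard one for this result, so there is nothing to compare beyond noting that it is the expected argument.
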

In our framework, the space $\bar{\mathcal{V}}$ will be formed by simple auxiliary spaces such that $\bar{\mathcal{B}}$ could be efficiently inverted. 

Clearly, the Riesz representation $\mathcal{B}_\mathcal{V}$ is a SPD operator satisfying $\langle\mathcal{B}_\mathcal{V}^{-1}\cdot,\cdot\rangle=(\cdot,\cdot)_{\mathcal{V}}$ and $\|\cdot\|_{\mathcal{B}_\mathcal{V}^{-1}}=\|\cdot\|_{\mathcal{V}}$.
Due to the boundedness \eqref{bdd} and the inf-sup condition
\eqref{infsup}, $\mathcal{B}_\mathcal{V}$ is a preconditioner for
$\mathcal{A}$, i.e.,
\begin{equation*}
    \|\mathcal{B}_\mathcal{V}\mathcal{A}\|_\mathcal{V}={\alpha},\quad\|(\mathcal{B}_\mathcal{V}\mathcal{A})^{-1}\|_\mathcal{V}=\beta^{-1},
\end{equation*}
respectively.
This fact is discovered in \cite{MardalWinther2011} and leads to a unified framework of preconditioning for linear systems arising from discretized PDEs. For our purpose, Lemma \ref{reserror} with $\mathcal{B}=\mathcal{B}_\mathcal{V}$ given above yields
\begin{equation}\label{reserror2}
    {\alpha}^{-1}\|\mathcal{R}\|_{\mathcal{B}_\mathcal{V}}\leq \|e\|_\mathcal{V}\leq \beta^{-1}\|\mathcal{R}\|_{\mathcal{B}_\mathcal{V}}.
\end{equation}

\section{Finite Element Exterior Calculus}\label{secderham}
In this section, we shall focus on the $H({\rm d})$ space of differential forms which includes the classical $H({\rm curl})$ and $H({\rm div})$ spaces as special examples.
For differential forms and exterior calculus,
we adopt the notation in \cite{ArnoldFalkWinther2006,ArnoldFalkWinther2010}.

\subsection{Continuous de Rham complex}
Let $\Omega\subset\mathbb{R}^n$ be a Lipschitz domain and $\Gamma\subseteq\partial\Omega$ be a closed set.
We assume the pair $(\Omega,\Gamma)$ is admissible in the sense of \cite{GolMM2011,Licht2019}. Given a subset $\Omega_0\subseteq\Omega,$ let $(\cdot,\cdot)_{\Omega_0}$ denote the $L^2$ inner product on $\Omega_0$ and $\|\cdot\|_{\Omega_0}$ the $L^2(\Omega_0)$ norm. For convenience, $(\cdot,\cdot)_{\Omega}$ and $\|\cdot\|_\Omega$ simplify as $(\cdot,\cdot)$ and $\|\cdot\|$, respectively. 

Let $dx^i$ be the $i$-th cotangent coordinate vector in $\mathbb{R}^n$ and $\wedge$ denote the wedge product of tensors.
We start with  $\Lambda^k(\Omega)$, the space of smooth differential $k$-forms on $\overline{\Omega},$ i.e.,
\begin{equation*}
    v\in\Lambda^k(\Omega)\Longleftrightarrow v=\sum_\alpha v_\alpha {\rm d}x^{\alpha_1}\wedge\cdots\wedge {\rm d}x^{\alpha_k},~v_\alpha\in C^\infty(\overline{\Omega}),
\end{equation*}
where the summation is taken over the set of  increasing multi-indices $\alpha=(\alpha_1,\ldots,\alpha_k)$ with $1\leq\alpha_1<\cdots<\alpha_k\leq n.$ In what {follows,} we say $\{v_\alpha\}$ are coefficients of $v\in \Lambda^k(\Omega).$ The $L^2$ norm and $H^1$ semi-norm on $\Omega$ are naturally defined coefficient-wise as
\begin{align*}
    \|v\|^2=\sum_{\alpha}\|v_\alpha\|^2,\quad|v|_{H^1}^2=\|\nabla v\|^2=\sum_{\alpha}\|\nabla v_\alpha\|^2.
\end{align*}
Other Sobolev norms could be defined in an analogous fashion.

Given an $(n-1)$-dimensional set $\gamma\subset\overline{\Omega},$ we use $\text{tr}=\text{tr}|_\gamma$ to denote the trace of differential forms on $\gamma$, which is the pullback of the inclusion $\gamma\hookrightarrow\overline{\Omega}$. For each index $k$, there exists exterior derivative ${\rm d}_k: \Lambda^k(\Omega)\rightarrow\Lambda^{k+1}(\Omega)$. Let $L^2\Lambda^k(\Omega)$ the space of $k$-forms with $L^2$ coefficients. The derivative ${\rm d}_k$ could be extended as a densely defined, unbounded operator ${\rm d}_k: L^2\Lambda^k(\Omega)\rightarrow L^2\Lambda^{k+1}(\Omega)$ and
we have ${\rm d}_{k+1}\circ {\rm d}_k=0$. 

The $H({\rm d})$ spaces of $k$-forms are
\begin{align*}
    H\Lambda^k(\Omega)&=\{v\in L^2\Lambda^k(\Omega): {\rm d}_kv\in L^2\Lambda^{k+1}(\Omega)\},\\
    \mathcal{V}^k&=\{v\in H\Lambda^k(\Omega): \text{tr} v=0\text{ on }\Gamma\}.
\end{align*}
It is noted that ${\rm d}_0$ could be identified with the usual gradient and  $\mathcal{V}^0$ is
$$\mathcal{H}_\Gamma:=\{v\in H^1(\Omega): v=0\text{ on }\Gamma\}.$$ 
In addition, $\mathcal{V}^1$ and $\mathcal{V}^2$ in $\mathbb{R}^3$ could be identified with $H({\rm curl})$ and $H({\rm div})$ spaces, respectively.
The de Rham complex is as follows
\begin{equation}\label{dR}
\begin{CD}
    \mathcal{V}^0@>{{\rm d}_0}>>\mathcal{V}^1@>{{\rm d}_1}>>\cdots@>{{\rm d}_{n-2}}>>\mathcal{V}^{n-1}@>{{\rm d}_{n-1}}>>\mathcal{V}^n.
    \end{CD}
\end{equation}
Let $\mathfrak{Z}^k=N({\rm d}_k)$ denote the kernel of ${\rm d}_k,$ $\mathfrak{B}^k=R({\rm d}_{k-1})$ the range of ${\rm d}_{k-1},$ $\perp$ the operation of taking $L^2$ orthogonal complement, and $\mathfrak{H}^k=\mathfrak{Z}^{k}\cap\mathfrak{B}^{k\perp}$ be the space of harmonic $k$-forms. 
Since ${\rm d}_k$ has a closed range (see, e.g.~\cite{ArnoldFalkWinther2010,GolMM2011}), we have the Hodge decomposition
\begin{equation}\label{Hodge}
    \mathcal{V}^k={\rm d}_{k-1}\mathcal{V}^{k-1}\oplus\mathfrak{H}^k\oplus\mathfrak{Z}^{k\perp},
\end{equation}
where $\oplus$ is the $L^2$ direct sum. For the same reason, the Poincar\'e inequality is true:
\begin{equation}\label{Poincare}
    \|v\|\leq c_P\|{\rm d}_kv\|,\quad v\in \mathfrak{Z}^{k\perp},
\end{equation}
where the constant $c_P>0$ depends on $\Omega, \Gamma.$

Let $H^1\Lambda^k(\Omega)$ denote the space of $k$-forms with $H^1$ coefficients. 
Assuming $\Gamma=\emptyset$ or $\Gamma=\partial\Omega$ and that $\Omega$ is convex or has smooth boundary, it is known that $\mathfrak{H}^k$ and $\mathfrak{Z}^{k\perp}$ are continuously embedded in $H^1\Lambda^k(\Omega)$ (see \cite{ABDG1998,ArnoldFalkWinther2006,Gaffney1951}): 
\begin{subequations}\label{qzH1}
    \begin{align}
&|q|_{H^1}\leq C_{\mathfrak{H}^k}\|q\|,\quad\forall q\in\mathfrak{H}^k,\\
&|z|_{H^1}\leq C_{\mathfrak{Z}^{k\perp}}\big(\|z\|+\|{\rm d}_kz\|\big),\quad\forall z\in\mathfrak{Z}^{k\perp},
    \end{align}
\end{subequations}
where the constants $C_{\mathfrak{H}^k}>0$, $C_{\mathfrak{Z}^{k\perp}}>0$ depend only on $\Omega, \Gamma.$ 

Let $\star: L^2\Lambda^k(\Omega)\rightarrow L^2\Lambda^{n-k}(\Omega)$ denote the Hodge star. The $L^2$ adjoint of ${\rm d}_{k-1}$ is the exterior coderivative $\delta_k: L^2\Lambda^k(\Omega)\rightarrow L^2\Lambda^{k-1}(\Omega)$ satisfying $\star\delta_k=(-1)^k{\rm d}_{n-k}\star$. For any subdomain $\Omega_0\subseteq\Omega$ and $v\in H^1\Lambda^k(\Omega_0)$, $w\in H^1\Lambda^{k+1}(\Omega_0),$ we shall frequently use the Stokes formula 
\begin{equation}\label{Stokes}
    ({\rm d}_kv,w)_{\Omega_0}=(v,\delta_{k+1} w)_{\Omega_0}+\int_{\partial\Omega_0}\text{tr} v\wedge\text{tr}\star w.
\end{equation}

\subsection{Discrete complex and proxy vector fields}
Let domain $\Omega$ be partitioned into a conforming and simplicial triangulation $\mathcal{T}_h$ aligned with $\Gamma.$ We use  $\mathcal{S}_h$ to denote the set of interior faces as well as boundary faces in $\partial\Omega\backslash\Gamma$. The mesh size functions $h$ and $h_s$ are defined as 
\begin{align*}
h|_T&=h_T:=\text{diam}(T),\quad\forall T\in\mathcal{T}_h,\\
h_s|_S&=h_S:=\text{diam}(S),\quad\forall S\in\mathcal{S}_h.    
\end{align*}
Let $\nu$ denote a piecewise constant vector field on $\mathcal{S}_h$ such that $\nu|_S=\nu_S$ is a unit normal to $S\in\mathcal{S}_h$. For a boundary face $S\subset\partial\Omega$, $\nu_S$ is chosen to be outward pointing. Let $(\cdot,\cdot)_{\mathcal{S}_h}$ and $\|\cdot\|_{\mathcal{S}_h}$ denote the $L^2$ inner product and norm on the skeleton $\mathcal{S}_h$, respectively.
The partition $\mathcal{T}_h$ is assumed to be shape regular in the sense that
\begin{equation*}
    \max_{T\in\mathcal{T}_h}\frac{\overline{\rho}_T}{\underline{\rho}_T}=C_{\text{shape}}<\infty,
\end{equation*} 
where $\overline{\rho}_T$ and $\underline{\rho}_T$ denote the radii of the circumscribed and inscribed spheres of an element $T,$ respectively.

We consider the Arnold--Falk--Winther element space  \cite{ArnoldFalkWinther2006,ArnoldFalkWinther2010}
\begin{equation*}
\begin{aligned}
    &\mathcal{P}_r\Lambda^k(\mathcal{T}_h)=\{v\in H\Lambda^k(\Omega): v|_T\in\mathcal{P}_r\Lambda^k(T),~\forall T\in\mathcal{T}_h\},\\
    &\mathcal{P}_{r+1}^-\Lambda^k(\mathcal{T}_h)=\mathcal{P}_r\Lambda^k(\mathcal{T}_h)+\iota \mathcal{P}_r\Lambda^k(\mathcal{T}_h),
\end{aligned}
\end{equation*}
where $\mathcal{P}_r\Lambda^k(T)$ is the space of $k$-forms on $T$ with polynomial coefficients of degree no greater than $r$, and $\iota$ is the interior product w.r.t.~the coordinate  vector $x=(x_1,\ldots,x_n).$ Let $\mathcal{V}^k_h\subset \mathcal{V}^k$ be 
\begin{equation}\label{Vhk}
\mathcal{V}^k_h=\left\{
\begin{aligned}
    &\mathcal{P}_r\Lambda^k(\mathcal{T}_h,\Gamma):=\mathcal{P}_r\Lambda^k(\mathcal{T}_h)\cap\mathcal{V}^k,\\
    &\mathcal{P}_{r+1}^-\Lambda^k(\mathcal{T}_h,\Gamma):=\mathcal{P}^-_{r+1}\Lambda^k(\mathcal{T}_h)\cap\mathcal{V}^k.
\end{aligned}\right.
\end{equation}
We have a discrete de Rham complex
\begin{equation*}
\begin{CD}
    \mathcal{V}_h^0@>{{\rm d}_0}>>\mathcal{V}_h^1@>{{\rm d}_1}>>\cdots@>{{\rm d}_{n-2}}>>\mathcal{V}_h^{n-1}@>{{\rm d}_{n-1}}>>\mathcal{V}_h^n
    \end{CD}
\end{equation*}
provided ${\rm d}_k\mathcal{V}_h^k\subseteq\mathcal{V}_h^{k+1}$ holds for each $k$.
The lowest order discrete complex is based on $\mathcal{V}^k_h=\mathcal{V}^{k,0}_h:=\mathcal{P}_1^-\Lambda^k(\mathcal{T}_h,\Gamma)$, the space of $k$-forms with incomplete piecewise linear coefficients.

In $\mathbb{R}^3,$ the $H({\rm curl})$ and $H({\rm div})$ spaces are
\begin{align*}
    \mathcal{V}^c&=\big\{v\in [L^2(\Omega)]^3: \nabla\times v\in[L^2(\Omega)]^3,~v\times \nu=0\text{ on }\Gamma\big\},\\
    \mathcal{V}^d&=\big\{v\in [L^2(\Omega)]^3: \nabla\cdot v\in L^2(\Omega),~v\cdot \nu=0\text{ on }\Gamma\big\}.
\end{align*}
Using proxy vector fields as in \cite{ArnoldFalkWinther2006}, we identify the de Rham complex \eqref{dR} ($n=3$) with a classical sequence 
\begin{equation}\label{identification}
\begin{CD}
    \mathcal{V}^0@>{{\rm d}_0}>>\mathcal{V}^1@>{{\rm d}_1}>>\mathcal{V}^2@>{{\rm d}_2}>>\mathcal{V}^3\\
    @VV\cong V  @VV\cong V  @VV\cong V @VV\cong V\\
    \mathcal{H}_\Gamma@>{\nabla}>>\mathcal{V}^c@>{\nabla\times}>>\mathcal{V}^d@>{\nabla\cdot}>>L^2(\Omega)
    \end{CD}
\end{equation}
Let $\mathcal{V}^c_h\subset \mathcal{V}^c$ be a N\'ed\'elec edge element space, and $\mathcal{V}^d_h\subset \mathcal{V}^d$ be  a Raviart--Thomas--N\'ed\'elec or Brezzi--Douglas--Marini finite element space; see, e.g.,  \cite{BrezziDouglasMarini1985,Monk2003,Nedelec1980,Nedelec1986,RaviartThomas1977}. We use $\mathcal{H}_h\subset\mathcal{H}_\Gamma$ (resp.~$L^2_h$) to denote the space of continuous (resp.~discontinuous) piecewise polynomials on $\mathcal{T}_h.$ The identification in the discrete level is
\begin{equation}\label{disidentification}
\begin{CD}
    \mathcal{V}_h^0@>{{\rm d}_0}>>\mathcal{V}_h^1@>{{\rm d}_1}>>\mathcal{V}_h^2@>{{\rm d}_2}>>\mathcal{V}_h^3\\
    @VV\cong V  @VV\cong V  @VV\cong V @VV\cong V\\
    \mathcal{H}_h@>{\nabla}>>\mathcal{V}_h^c@>{\nabla\times}>>\mathcal{V}_h^d@>{\nabla\cdot}>>L^2_h
    \end{CD}
\end{equation}
where we assume $\nabla\mathcal{H}_h\subseteq\mathcal{V}_h^c$, $\nabla\times\mathcal{V}^c_h\subseteq\mathcal{V}_h^d$, $\nabla\cdot\mathcal{V}_h^d\subseteq L^2_h$.

Let $H^1\Lambda^k(\mathcal{T}_h)$ denote the space of $k$-forms with piecewise $H^1$ coefficients w.r.t.~$\mathcal{T}_h.$ Given a face $S\in\mathcal{S}_h$ and $v\in H^1\Lambda^k(\mathcal{T}_h)$, we use $\llbracket{\text{tr} v}\rrbracket$ to denote the jump of the trace of $v$ across $S.$ 

\section{A posteriori error estimates in \emph{H}({\rm grad})}\label{secHgrad}
The building block of our theory is $\mathcal{H},$  a closed subspace of $H^1\Lambda^k(\Omega).$ In this section, we make the  natural choice 
\begin{equation*}
    \mathcal{H}=\mathcal{H}^k:=\{v\in H^1\Lambda^k(\Omega): v=0\text{ on }\Gamma\}.
\end{equation*}
We note that $\mathcal{H}^k$ with $0\leq k\leq n-1$ is equivalent to the Cartesian product of $\binom{n}{k}$ copies of the scalar-valued space
$\mathcal{H}_\Gamma.$ The $\mathcal{H}$ inner product is
\begin{equation*}
    (v,w)_{\mathcal{H}}=(v,w)+(\nabla v,\nabla w),\quad\forall v,w\in\mathcal{H}.
\end{equation*}
Let $\mathcal{A}_{\mathcal{H}}=\mathcal{B}_{\mathcal{H}}^{-1}: \mathcal{H}\rightarrow\mathcal{H}^\prime$ be the operator associated with $(\cdot,\cdot)_{\mathcal{H}}$. We shall make use of the subspace $\mathcal{H}^{k,1}_h\subset\mathcal{H}^k$ of $k$-forms with continuous and piecewise linear coefficients.

Throughout the rest of this paper, we adopt the notation $C_1\preccurlyeq C_2$ (resp.~$C_1$ $\lesssim C_2$) provided $C_1\leq C_3C_2$ with $C_3$ being a generic constant dependent only on $\Omega$, $\Gamma,$ $C_{\text{shape}}$ (resp.~$\Omega$, $\Gamma,$ $C_{\text{shape}}$ and physical parameters). We say $C_1\approx C_2$ provided $C_1\preccurlyeq C_2$, $C_2\preccurlyeq C_1$; and $C_1\eqsim C_2$ provided $C_1\lesssim C_2$ and $C_2\lesssim C_1$. This notation generalizes to comparison between SPD operators in an obvious way.

\subsection{Residual estimator}
The estimation of  $\|r\|_{\mathcal{B}_{\mathcal{H}}}=\langle r,{\mathcal{B}_{\mathcal{H}}} r\rangle^\frac{1}{2}$ is well understood and essential to a posteriori estimates of AFEMs for second order elliptic and Stokes equations; see, e.g., \cite{Verfurth2013}. We consider $r\in\mathcal{H}^\prime$ with the following $L^2$ representation 
\begin{equation}\label{rl2}
    \langle r,v\rangle=(R(r),v)+(J(r),\text{tr}v)_{\mathcal{S}_h},\quad\forall v\in\mathcal{H},
\end{equation}
where $R(r)\in L^2(\Omega)$, and $J(r)\in L^2(\mathcal{S}_h)$ is piecewise $L^2$ on the $(n-1)$-dimensional skeleton $\mathcal{S}_h$. The Riesz representor of $r$ is $e(r)\in\mathcal{H}$ such that \begin{equation}\label{er}
    (e(r),v)_\mathcal{H}=\langle r,v\rangle,\quad\forall v\in \mathcal{H}.
\end{equation}

To estimate the dual norm $\|r\|_{\mathcal{B}_{\mathcal{H}}}$ of the residual, it is necessary to construct a regularized interpolation onto the lowest order finite element space with local approximation property. We present such an interpolation in the next lemma. The details of the construction and the proof are given in the appendix. For $T\in\mathcal{T}_h$ (resp.~$S\in\mathcal{S}_h$), let $\Omega_T^k$ (resp.~$\Omega^k_S$) denote the union of elements (including $T$) sharing at least one $k$-dimensional simplex with $T$ (resp.~$S$) in $\mathcal{T}_h.$ 
\begin{lemma}\label{Pik}
For $0\leq k\leq n-1$, there exists $\Pi^k_h: \mathcal{H}^k\rightarrow\mathcal{P}_1^-\Lambda^k(\mathcal{T}_h,\Gamma)$ such that for all $v\in \mathcal{H}^k$ and $T\in\mathcal{T}_h$, $S\in\mathcal{S}_h$, 
\begin{subequations}\label{approxPik}
\begin{align}
\|\Pi^k_hv\|_T&\preccurlyeq\|v\|_{\Omega_T^k},\label{l2bd}\\
    \|v-\Pi^k_hv\|_T&\preccurlyeq h_T|v|_{H^1(\Omega^k_T)},\label{approx1}\\
    \|{\rm tr}(v-\Pi^k_hv)\|_S&\preccurlyeq h_S^\frac{1}{2}|v|_{H^1(\Omega^k_S)}.\label{approx2}
\end{align}
\end{subequations}
\end{lemma}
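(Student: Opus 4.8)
The plan is to construct $\Pi^k_h$ componentwise from a single scalar operator and then reduce all three estimates to known local approximation properties of that scalar operator. Since $\mathcal{H}^k$ is, up to the identification noted in the text, the Cartesian product of $\binom{n}{k}$ copies of $\mathcal{H}_\Gamma$, and $\mathcal{P}_1^-\Lambda^k(\mathcal{T}_h,\Gamma)$ for the lowest order is the Whitney $k$-form space, it suffices to build a scalar quasi-interpolation $I_h:\mathcal{H}_\Gamma\to\mathcal{H}^{\,}_h$ onto continuous piecewise linears vanishing on $\Gamma$, with the Clément/Scott--Zhang type bounds, and define $\Pi^k_h$ coefficientwise. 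Concretely I would use a Scott--Zhang-style averaging: for each interior (or $\partial\Omega\setminus\Gamma$) vertex $z$, fix a simplex $T_z$ (or a face) containing $z$, let $\{\phi_z\}$ be the nodal basis dual to the $\mathbb{P}_1(T_z)$ basis, and set $(I_h v)(z)=\int_{T_z}\psi_z v$ for the appropriate dual function $\psi_z$, with $(I_h v)(z)=0$ for $z\in\Gamma$. The key properties are: (i) $I_h$ reproduces continuous piecewise linears vanishing on $\Gamma$ (hence reproduces constants locally away from $\Gamma$); (ii) the value at $z$ is controlled by $\|v\|_{L^2(\omega_z)}$ with $\omega_z$ the vertex patch; (iii) $I_h$ is a projection onto $\mathcal{H}^{\,}_h$.

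With $I_h$ in hand, the $L^2$ stability \eqref{l2bd} follows from (ii) and the standard scaling $\|\phi_z\|_T\approx h_T^{n/2}$ together with $|(I_hv)(z)|\preccurlyeq h_z^{-n/2}\|v\|_{L^2(\omega_z)}$, summing over the (boundedly many, by shape regularity) vertices of $T$; the union of patches $\omega_z$ over $z\in T$ is contained in $\Omega_T^k$ by definition. For \eqref{approx1}, on each $T$ pick a constant $c$ (e.g.\ the mean of $v$ on the relevant patch) and write $v-\Pi^k_hv=(v-c)-\Pi^k_h(v-c)$ using local reproduction of constants away from $\Gamma$ (near $\Gamma$ one uses instead a Poincaré--Friedrichs inequality since $v$ vanishes there, so $c=0$ works); then $\|v-c\|_T\preccurlyeq h_T|v|_{H^1(\Omega_T^k)}$ by a Poincaré/Bramble--Hilbert argument on the patch, and $\|\Pi^k_h(v-c)\|_T\preccurlyeq\|v-c\|_{\Omega_T^k}\preccurlyeq h_T|v|_{H^1(\Omega_T^k)}$ by \eqref{l2bd} applied to $v-c$. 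The face estimate \eqref{approx2} is obtained from \eqref{approx1} and its $H^1$-stable analogue by a scaled trace inequality: for $w$ piecewise $H^1$ on the two elements $T_1,T_2$ sharing $S$, $\|\mathrm{tr}\,w\|_S^2\preccurlyeq \sum_i\big(h_{T_i}^{-1}\|w\|_{T_i}^2+h_{T_i}|w|_{H^1(T_i)}^2\big)$; applying this with $w=v-\Pi^k_hv$, the first term contributes $h_S^{-1}(h_S|v|_{H^1})^2=h_S|v|_{H^1}^2$ via \eqref{approx1}, and the second contributes $h_S|v|_{H^1}^2$ once we also establish $|v-\Pi^k_hv|_{H^1(T)}\preccurlyeq|v|_{H^1(\Omega_T^k)}$, which follows from an inverse inequality $|\Pi^k_hv|_{H^1(T)}\preccurlyeq h_T^{-1}\|\Pi^k_hv\|_T$ combined with \eqref{l2bd} applied to $v-c$.

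The main obstacle is not any single inequality but the bookkeeping near $\Gamma$: one must ensure the Scott--Zhang dual functionals are chosen so that $I_hv$ genuinely vanishes on $\Gamma$ whenever $v$ does (choosing $T_z$, or the averaging face, to lie in $\Gamma$ for $z\in\Gamma$), while still reproducing the full piecewise-linear trace space on $\partial\Omega\setminus\Gamma$, and that the admissibility hypothesis on $(\Omega,\Gamma)$ is what guarantees the relevant Poincaré--Friedrichs constant on boundary patches is uniform. A secondary technical point is that $\mathcal{P}_1^-\Lambda^k$ proxy fields are Whitney forms whose coefficients in the $dx^\alpha$ basis are \emph{not} themselves continuous piecewise linears, so the "coefficientwise" construction must be interpreted correctly — either by working with Whitney edge/face averages directly (a de Rham analogue of Scott--Zhang, as in the commuting interpolants literature), or by first defining $\Pi^k_h$ on the isomorphic product space $[\mathcal{H}_h]^{\binom{n}{k}}$ of vector-Lagrange forms and checking the resulting object lies in $\mathcal{P}_1^-\Lambda^k(\mathcal{T}_h,\Gamma)$. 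Since the lemma only asserts existence with the three bounds, I would take the cleanest route: cite/adapt the known construction of a local, $L^2$- and $H^1$-stable projection onto $\mathcal{P}_1^-\Lambda^k(\mathcal{T}_h,\Gamma)$ and verify \eqref{l2bd}--\eqref{approx2} by the scaling/Bramble--Hilbert steps above, deferring the detailed construction to the appendix as the text promises.
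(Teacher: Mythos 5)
Your preferred route is essentially the paper's. The appendix defines $\Pi_h^kv=\sum_{i}\bigl(\int_{\Delta_i}\text{tr}\,Q_{\sigma_i}v\bigr)\phi_i$ using the Whitney degrees of freedom on the $k$-simplices $\Delta_i$, where $Q_{\sigma_i}$ is the $L^2$ projection onto constant $k$-forms over an $n$-dimensional element $\sigma_i\supseteq\Delta_i$ (this element, rather than face, averaging is what gives \eqref{l2bd}), except that for $\Delta_i\subset\Gamma$ one averages over a face $\sigma_i\subset\Gamma$, so the operator preserves the boundary condition \emph{and} still reproduces constants locally everywhere; \eqref{approx1}--\eqref{approx2} then follow from Bramble--Hilbert and a scaled trace inequality, exactly as you outline. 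One caveat: of the two interpretations you offer for ``coefficientwise,'' the second (define $\Pi_h^k$ into the vector-Lagrange space $[\mathcal{H}_h]^{\binom{n}{k}}$ and check membership in $\mathcal{P}_1^-\Lambda^k(\mathcal{T}_h,\Gamma)$) is a dead end for $k\geq1$, since forms with continuous piecewise-linear coefficients are not Whitney forms and neither space contains the other (the paper itself remarks that $\mathcal{V}_h^{k,0}\not\subset\mathcal{H}_h^{k,1}$ for $k\geq1$); only your first option, the de Rham/Scott--Zhang analogue built on the $\int_{\Delta_i}\text{tr}(\cdot)$ functionals, is viable, and it is the one the paper carries out.
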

If the lowest order space $\mathcal{V}_h^{k,0}=\mathcal{P}_1^-\Lambda^k(\mathcal{T}_h,\Gamma)$ is not under
consideration, we note that $\mathcal{V}_h^k$ contains the space of continuous and piecewise
linear polynomials $\mathcal{H}_h^{k,1}$. Then it suffices to use the
coefficient-wise Cl\'ement interpolation (see~\cite{Clement})
$\Pi^{\rm C}_h: L^2\Lambda^k(\Omega)\rightarrow\mathcal{H}_h^{k,1}$ satisfying
\begin{equation}\label{approxPic}
\|h^{-1}(v-\Pi^{\rm C}_hv)\|+\|h_s^{-\frac{1}{2}}(v-\Pi^{\rm C}_hv)\|_{\mathcal{S}_h}^2\preccurlyeq|v|_{H^1(\Omega)}^2
\end{equation}
for all $v\in \mathcal{H}^k$. In addition, $\Pi^{\rm C}_h$ coincides with $\Pi_h^k$ when $k=0.$

Let $Q_h$ (resp.~$Q_h^s$) denote the $L^2$ projection onto the space
of piecewise polynomials, not necessarily continuous, and of fixed degree on
$\mathcal{T}_h$ (resp.~$\mathcal{S}_h$).  The data oscillation
is
\begin{equation}\label{def:osc}
\text{osc}_h(r):=\|h(R(r)-Q_hR(r))\|+\|h_s^\frac{1}{2}(J(r)-Q^s_hJ(r))\|_{\mathcal{S}_h}.
\end{equation}
The next lemma is proved using standard arguments in a posteriori
error estimation in the space $H^1(\Omega)$. For clarity we sketch its proof. The assumption $r\in(\mathcal{V}_h^{k,0})^\prime$ is
necessary because the residual $r$ always acts on the lowest order space
$\mathcal{V}_h^{k,0}\subset\mathcal{V}^k$, which is not contained in 
$\mathcal{H}_h^{k,1}$ if $k\geq1$.
\begin{lemma}\label{resH1}
For $0\leq k\leq n-1,$ let $r\in\mathcal{H}^\prime\cap(\mathcal{V}_h^{k,0})^\prime$ be in the form \eqref{rl2} and assume $\langle r,v_h\rangle=0$ for $v_h\in\mathcal{V}_h^{k,0}$.
Then we have
\begin{align*}
    &\|hR(r)\|+\|h_s^\frac{1}{2}J(r)\|_{\mathcal{S}_h}-{\rm osc}_h(r)\\
    &\quad\preccurlyeq\|r\|_{\mathcal{B}_\mathcal{H}}\preccurlyeq\|hR(r)\|+\|h_s^\frac{1}{2}J(r)\|_{\mathcal{S}_h}.
\end{align*}
\end{lemma}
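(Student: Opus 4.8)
The plan is to prove the upper and lower bounds separately, using the standard toolkit from $H^1$ a posteriori error estimation (residual estimators for reaction–diffusion problems), adapted coefficient-wise to $k$-forms.

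\textbf{Upper bound (reliability).} Since $\|r\|_{\mathcal{B}_\mathcal{H}}^2 = \langle r, \mathcal{B}_\mathcal{H} r\rangle = \langle r, e(r)\rangle = \|e(r)\|_\mathcal{H}^2$, it suffices to bound $\langle r, e(r)\rangle$. First I would use the Galerkin-type orthogonality $\langle r, v_h\rangle = 0$ for $v_h \in \mathcal{V}_h^{k,0}$ to write $\langle r, e(r)\rangle = \langle r, e(r) - \Pi_h^k e(r)\rangle$, where $\Pi_h^k$ is the interpolation from Lemma~\ref{Pik} mapping into $\mathcal{P}_1^-\Lambda^k(\mathcal{T}_h,\Gamma) = \mathcal{V}_h^{k,0}$. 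Then I would expand using the $L^2$ representation \eqref{rl2}:
\begin{equation*}
\langle r, e(r)\rangle = \big(R(r), e(r) - \Pi_h^k e(r)\big) + \big(J(r), \mathrm{tr}(e(r) - \Pi_h^k e(r))\big)_{\mathcal{S}_h}.
\end{equation*}
Applying Cauchy–Schwarz elementwise and face-wise, then invoking the local approximation estimates \eqref{approx1} and \eqref{approx2} together with finite overlap of the patches $\Omega_T^k$ and $\Omega_S^k$ (controlled by $C_{\mathrm{shape}}$), I get $\langle r, e(r)\rangle \preccurlyeq \big(\|hR(r)\| + \|h_s^{1/2}J(r)\|_{\mathcal{S}_h}\big)\,|e(r)|_{H^1} \preccurlyeq \big(\|hR(r)\| + \|h_s^{1/2}J(r)\|_{\mathcal{S}_h}\big)\,\|e(r)\|_\mathcal{H}$. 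Dividing by $\|e(r)\|_\mathcal{H} = \|r\|_{\mathcal{B}_\mathcal{H}}$ yields the upper bound.

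\textbf{Lower bound (efficiency).} This is the usual Verfürth bubble-function argument, applied coefficient-wise. For the volume term, on each $T$ I would test with $b_T \cdot Q_h R(r)$ where $b_T$ is the interior polynomial bubble on $T$ (extended by zero), and use the equivalence $\|Q_h R(r)\|_T^2 \approx (Q_h R(r), b_T Q_h R(r))_T = \langle r, b_T Q_h R(r)\rangle - (R(r) - Q_h R(r), b_T Q_h R(r))_T$; since $b_T Q_h R(r) \in \mathcal{H}$ with the required support, $\langle r, b_T Q_h R(r)\rangle = (e(r), b_T Q_h R(r))_\mathcal{H} \leq \|e(r)\|_\mathcal{H}\, \|b_T Q_h R(r)\|_\mathcal{H} \preccurlyeq h_T^{-1}\|e(r)\|_{\mathcal{H}(T)}\|Q_h R(r)\|_T$ by an inverse inequality on the bubble. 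Rearranging and summing over $T$ gives $\|h Q_h R(r)\| \preccurlyeq \|e(r)\|_\mathcal{H} + \mathrm{osc}_h(r)$. For the face term, I would use face bubbles $b_S$ supported on the two elements adjacent to $S$, extend $Q_h^s J(r)$ into the patch, and argue analogously, now absorbing the already-estimated volume contribution; this yields $\|h_s^{1/2} Q_h^s J(r)\|_{\mathcal{S}_h} \preccurlyeq \|e(r)\|_\mathcal{H} + \mathrm{osc}_h(r)$. Combining the two and adding back the oscillation terms via the triangle inequality ($\|h R(r)\| \leq \|h Q_h R(r)\| + \|h(R(r) - Q_h R(r))\|$, etc.) gives $\|hR(r)\| + \|h_s^{1/2}J(r)\|_{\mathcal{S}_h} \preccurlyeq \|r\|_{\mathcal{B}_\mathcal{H}} + \mathrm{osc}_h(r)$, which is the claimed lower bound.

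\textbf{Main obstacle.} The arguments are structurally identical to the scalar $H^1$ reaction–diffusion case, so the real content is bookkeeping: (i) checking that the bubble-enriched test functions genuinely lie in $\mathcal{H} = \mathcal{H}^k$, i.e., that the zero trace condition on $\Gamma$ is respected — this is automatic since bubbles on elements/faces not touching $\Gamma$ vanish there, and for faces/elements on $\partial\Omega\setminus\Gamma$ there is no constraint; and (ii) tracking that all hidden constants depend only on $\Omega,\Gamma,C_{\mathrm{shape}}$ (hence $\preccurlyeq$, not $\lesssim$), which follows because the interpolation constants in Lemma~\ref{Pik} and the bubble-function inverse/norm equivalences are purely geometric. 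The only genuinely non-routine point is that the residual $r$ need only be orthogonal to the \emph{lowest-order} space $\mathcal{V}_h^{k,0}$, which is exactly why Lemma~\ref{Pik} (rather than a Clément interpolation into $\mathcal{H}_h^{k,1}$) is needed in the reliability proof when $k \geq 1$; one must be careful to invoke $\Pi_h^k$ and not $\Pi_h^c$ there.
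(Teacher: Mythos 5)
Your proposal is correct and follows essentially the same route as the paper: the reliability bound via Galerkin orthogonality against $\mathcal{V}_h^{k,0}$, the quasi-interpolant $\Pi_h^k$ of Lemma~\ref{Pik} (correctly noting that the Cl\'ement operator into $\mathcal{H}_h^{k,1}$ would not suffice for $k\geq 1$), Cauchy--Schwarz, and the local approximation estimates; and the efficiency bound via the standard Verf\"urth bubble-function technique, which the paper simply cites. Your version merely realizes the dual norm at the Riesz representor $e(r)$ instead of taking a supremum, and spells out the bubble argument that the paper leaves to the reference.
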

\begin{proof}
It follows from the Cauchy--Schwarz inequality and Lemma \ref{Pik} that
\begin{align*}
&\|r\|_{\mathcal{B}_\mathcal{H}}={\sup_{v\in\mathcal{H}, \|v\|_{\mathcal{H}}=1}\langle r,v\rangle=\sup_{v\in\mathcal{H}, \|v\|_{\mathcal{H}}=1}\langle r,v-\Pi_h^kv\rangle}\\
    &\leq\left(\|hR(r)\|^2+\|h_s^\frac{1}{2}J(r)\|^2_{\mathcal{S}_h}\right)^\frac{1}{2}\\
    &\times\left(\|h^{-1}(v-\Pi_h^kv)\|^2+\|h_s^{-\frac{1}{2}}\text{tr}(v-\Pi_h^kv)\|_{\mathcal{S}_h}^2\right)^\frac{1}{2}.
\end{align*}
Then using \eqref{approxPik}, we prove 
the upper bound.
The lower bound follows from \eqref{er} with the bubble function technique explained in~\cite{Verfurth1996}. 
\end{proof}

\subsection{Local Dirichlet problems}

Let $\{x_i\}_{i=1}^N$ denote the set of vertices in $\mathcal{T}_h$. Let $\phi_i$ be the continuous piecewise linear function that takes the value 1 at $x_i$ and 0 at other vertices for each $x_i$. For $1\leq i\leq N,$ we define ${\Omega}_i:= \operatorname{supp}\phi_i$ and have
\begin{equation}\label{ephi}
\sum_{i=1}^N\phi_i(x)=1,\quad\|\nabla \phi_i\|_{L^\infty(\Omega)}\eqsim(\text{diam}\Omega_i)^{-1}.
\end{equation}
Let $$\mathcal{H}^k_i:=\{v\in\mathcal{H}: v=0\text{ on }\Omega\backslash\overline{\Omega}_i\}$$ and $\mathcal{I}_h: \mathcal{H}^{k,1}_h\hookrightarrow \mathcal{H}$, $\mathcal{I}_i: \mathcal{H}^k_i\hookrightarrow \mathcal{H}$ denote inclusions. 
Let $\mathcal{A}_h=\mathcal{I}^*_h\mathcal{A}_\mathcal{H}\mathcal{I}_h: \mathcal{H}^{k,1}_h\rightarrow(\mathcal{H}_h^{k,1})^\prime$ and $\mathcal{A}_i=\mathcal{I}^*_i\mathcal{A}_\mathcal{H}\mathcal{I}_i:$ $\mathcal{H}^k_i\rightarrow(\mathcal{H}^k_i)^\prime$ with $1\leq i\leq N$. We take the product space $\bar{\mathcal{H}}=\mathcal{H}^{k,1}_h\times\mathcal{H}_1\times\cdots\times\mathcal{H}_N$ and define $\Pi=(\mathcal{I}_h,\mathcal{I}_1,\ldots,\mathcal{I}_N): \bar{\mathcal{H}}\rightarrow\mathcal{H}$ as $$\Pi(v_h,v_1,\ldots,v_N)=v_h+v_1+\ldots+v_N,\quad v_h\in\mathcal{H}^{k,1}_h,~v_i\in\mathcal{H}^k_i.$$
Using the shape regularity of $\mathcal{T}_h$ and Cauchy--Schwarz inequality, we have $$\|\Pi\bar{v}\|_{\mathcal{H}}\preccurlyeq\|\bar{v}\|_{\bar{\mathcal{H}}},\quad\forall\bar{v}\in\bar{\mathcal{H}}.$$
For each $v\in\mathcal{H}$, let $v_h=\Pi^{\rm C}_hv$,  $v_i=(v-v_h)\phi_i$, and $\bar{v}=(v_h,v_1,\ldots,v_N)$. It follows from \eqref{approxPic} and \eqref{ephi} that
$$\Pi\bar{v}=v,\quad\|\bar{v}\|_{\bar{\mathcal{H}}}\preccurlyeq\|{v}\|_{{\mathcal{H}}}.$$
Therefore taking $\bar{\mathcal{B}}=\mathcal{B}_{\bar{\mathcal{H}}}=\text{diag}(\mathcal{A}_h^{-1},\mathcal{A}_1^{-1},\ldots,\mathcal{A}_N^{-1}): \bar{\mathcal{H}}^\prime\rightarrow\bar{\mathcal{H}}$ and $\mathcal{B}=\mathcal{B}_{\mathcal{H}}$ in Lemma \ref{FSP} respectively,  the two-level additive Schwarz preconditioner $$\mathcal{B}_\mathcal{H}^a:=\mathcal{I}_h\mathcal{A}^{-1}_h \mathcal{I}^*_h+\sum_{i=1}^N\mathcal{I}_i\mathcal{A}_i^{-1}\mathcal{I}^*_i.$$ 
is shown to be spectrally equivalent to $\mathcal{B}_\mathcal{H}$, i.e.,
\begin{equation}\label{BHa}
    \langle r,\mathcal{B}_\mathcal{H}r\rangle\simeq\langle r,\mathcal{B}_\mathcal{H}^a r\rangle=\langle \mathcal{I}_h^*r,\mathcal{A}_h^{-1}\mathcal{I}_h^*r\rangle+\sum_{i=1}^N\langle \mathcal{I}_i^*r,\mathcal{A}_i^{-1}\mathcal{I}_i^*r\rangle,\quad\forall r\in \mathcal{H}^{\prime}.
\end{equation}
Hence we obtain the implicit error indicator based on local problems.
\begin{lemma}\label{esteta}
Let $r\in\mathcal{H}^\prime$ and $\langle r,v_h\rangle=0$ $\forall v_h\in\mathcal{H}^{k,1}_{h}$. We have
\begin{equation*}
    \langle r,\mathcal{B}_\mathcal{H}r\rangle\eqsim\sum_{i=1}^N\|\eta_i\|^2_{H^1(\Omega_i)},
\end{equation*}
where $\eta_i\in\mathcal{H}^k_i$ solves
\begin{equation*}
    (\eta_i,\phi)+(\nabla \eta_i,\nabla\phi)=\langle r,\phi\rangle,\quad\forall\phi\in\mathcal{H}^k_i.
\end{equation*}
\end{lemma}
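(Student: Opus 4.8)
The plan is to recognize that Lemma~\ref{esteta} is an immediate corollary of the spectral equivalence \eqref{BHa} already established just above its statement, together with an identification of each local term $\langle\mathcal{I}_i^*r,\mathcal{A}_i^{-1}\mathcal{I}_i^*r\rangle$ with the quantity $\|\eta_i\|_{H^1(\Omega_i)}^2$, plus a vanishing of the coarse-grid contribution under the orthogonality hypothesis. So the argument has three short pieces, and I would present them in that order.

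First I would dispose of the coarse term. Since $\langle r,v_h\rangle=0$ for all $v_h\in\mathcal{H}^{k,1}_h$, by definition $\mathcal{I}_h^*r=0$ in $(\mathcal{H}^{k,1}_h)'$, hence $\langle\mathcal{I}_h^*r,\mathcal{A}_h^{-1}\mathcal{I}_h^*r\rangle=0$ and the right-hand side of \eqref{BHa} collapses to $\sum_{i=1}^N\langle\mathcal{I}_i^*r,\mathcal{A}_i^{-1}\mathcal{I}_i^*r\rangle$. Thus $\langle r,\mathcal{B}_\mathcal{H}r\rangle\simeq\sum_{i=1}^N\langle\mathcal{I}_i^*r,\mathcal{A}_i^{-1}\mathcal{I}_i^*r\rangle$.

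Next I would unwind the local term. Recall $\mathcal{A}_i=\mathcal{I}_i^*\mathcal{A}_\mathcal{H}\mathcal{I}_i:\mathcal{H}^k_i\to(\mathcal{H}^k_i)'$ is the Riesz operator for the $H^1(\Omega_i)$-type inner product $(\cdot,\cdot)+(\nabla\cdot,\nabla\cdot)$ restricted to $\mathcal{H}^k_i$, so it is SPD and its inverse $\mathcal{A}_i^{-1}$ is the corresponding Riesz representation. By definition of $\eta_i$, one has $\mathcal{A}_i\eta_i=\mathcal{I}_i^*r$, i.e. $\eta_i=\mathcal{A}_i^{-1}\mathcal{I}_i^*r$, and therefore
\[
\langle\mathcal{I}_i^*r,\mathcal{A}_i^{-1}\mathcal{I}_i^*r\rangle=\langle\mathcal{A}_i\eta_i,\eta_i\rangle=(\eta_i,\eta_i)+(\nabla\eta_i,\nabla\eta_i)=\|\eta_i\|_{H^1(\Omega_i)}^2.
\]
(One should note that $\eta_i$ exists and is unique because $\mathcal{A}_i$ is an isomorphism, and that $\langle r,\phi\rangle=\langle\mathcal{I}_i^*r,\phi\rangle$ for $\phi\in\mathcal{H}^k_i$ since $\mathcal{I}_i$ is the inclusion.) Summing over $i$ and combining with the previous step gives $\langle r,\mathcal{B}_\mathcal{H}r\rangle\simeq\sum_{i=1}^N\|\eta_i\|_{H^1(\Omega_i)}^2$, which is the claim.

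In fact, the real content — the two verifications feeding the Fictitious Space Lemma~\ref{FSP}, namely the stable surjectivity of $\Pi$ via the Cl\'ement interpolant and the partition of unity $\{\phi_i\}$, using \eqref{approxPic} and \eqref{ephi} — has already been carried out in the paragraph preceding the lemma, yielding \eqref{BHa}. So there is no serious obstacle left in the lemma itself; the only point requiring a little care is making explicit that $\mathcal{I}_h^*r=0$ follows from the stated Galerkin-type orthogonality against $\mathcal{H}^{k,1}_h$ and that each $\mathcal{A}_i^{-1}\mathcal{I}_i^*r$ is exactly the local Dirichlet solution $\eta_i$ appearing in the statement. The proof is therefore essentially two lines once \eqref{BHa} is in hand.
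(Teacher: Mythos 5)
Your proposal is correct and follows exactly the paper's own argument: invoke the spectral equivalence \eqref{BHa}, observe that $\mathcal{I}_h^*r=0$ kills the coarse term, and identify $\eta_i=\mathcal{A}_i^{-1}\mathcal{I}_i^*r$ so that $\langle\mathcal{I}_i^*r,\mathcal{A}_i^{-1}\mathcal{I}_i^*r\rangle=\|\eta_i\|^2_{H^1(\Omega_i)}$. Your write-up merely spells out these steps in slightly more detail than the paper does.
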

\begin{proof}
Let $\eta_i=\mathcal{A}_i^{-1}\mathcal{I}_i^*r$ in \eqref{BHa}, which solves the local problem in the lemma. Then noticing $\mathcal{I}_h^*r=0$ and using \eqref{BHa} and $\langle \mathcal{I}_i^*r,\mathcal{A}_i^{-1}\mathcal{I}_i^*r\rangle=\|\eta_i\|^2_{H^1(\Omega_i)}$, we complete the proof.
\end{proof}

The error estimator in Lemma \ref{esteta} was first proposed in \cite{BabuskaRheinboldt1978}
and explained using operator preconditioning in \cite{LiZikatanov2021CAMWA}. Here we
derive it from a new point of view, utilizing Lemma~\ref{FSP} (the fictitious space lemma). To make the local problem in Lemma \ref{esteta} solvable in practice, one could replace $\mathcal{H}^k_i$ with a (piecewise) polynomial subspace $\widetilde{\mathcal{H}}^k_i$. It can be shown that the estimator based on the discrete local subspace is still a two-sided bound of the residual provided $\widetilde{\mathcal{H}}^k_i$ contains suitable bubble functions; see, e.g., \cite{LiZikatanov2021CAMWA,Verfurth1996}.

\subsection{Robust error estimators in \emph{H}(grad)}
Let $\varepsilon$ and $\kappa$ be positive constants. The singularly perturbed $H({\rm grad})$ problem seeks $u\in\mathcal{H}_\Gamma$  such that
\begin{equation}\label{SPP}
    \varepsilon(\nabla u,\nabla v)+\kappa(u,v)=(f,v),\quad\forall v\in\mathcal{H}_\Gamma. 
\end{equation}
A posteriori error analysis of finite element methods for \eqref{SPP} is well established; see, e.g., \cite{Verfurth2013}. Here we consider the weighted $H^1$ space of $k$-forms $\mathcal{H}_w^k$, which consists of the same elements in $\mathcal{H}^k$ but equipped with the inner product and norm
\begin{align*}
    (v,\phi)_{\mathcal{H}_w^k}&:=\varepsilon(\nabla v,\nabla \phi)+\kappa(v,\phi),\\
    \|v\|_{\mathcal{H}_w^k}^2&:=\varepsilon\|\nabla v\|^2+\kappa\|v\|^2,\quad\forall v,\phi\in\mathcal{H}_w^k.
\end{align*}

The following lemma is essential for a posteriori error analysis of singularly perturbed $H({\rm d})$ problems. In this scenario, we shall assume $J(r)$ of certain residual functional $r$ is a piecewise polynomial on $\mathcal{S}_h$ because it is the case for
 singularly perturbed problems in Section \ref{sec:singularHd}.   The weighted mesh size functions and the technique in the following proof were used in \cite{Verfurth2013} for reaction diffusion equations in $H(\rm grad)$.
\begin{lemma}\label{resH1robust}
Let $\bar{h}=\min\{\varepsilon^{-\frac{1}{2}}h,\kappa^{-\frac{1}{2}}\}$ and $\bar{h}_s=\min\{\varepsilon^{-\frac{1}{2}}h_s,\kappa^{-\frac{1}{2}}\}$. Let $r\in(\mathcal{H}_w^k)^\prime\cap(\mathcal{V}_h^{k,0})^\prime$ be in the form \eqref{rl2}. Assume $J(r)$ is a piecewise polynomial on $\mathcal{S}_h$ and that $\langle r,v_h\rangle=0$ for any $v_h\in\mathcal{V}_h^{k,0}$.
Then we have
\begin{align*}
    &\|\bar{h} R(r)\|+\|\varepsilon^{-\frac{1}{4}}\bar{h}_s^\frac{1}{2}J(r)\|_{\mathcal{S}_h}-{\rm osc}_{\bar{h}}(r)\\
    &\quad\preccurlyeq\|r\|_{\mathcal{B}_{\mathcal{H}_w^k}}\preccurlyeq\|\bar{h} R(r)\|+\|\varepsilon^{-\frac{1}{4}}\bar{h}_s^\frac{1}{2}J(r)\|_{\mathcal{S}_h},
\end{align*}
where ${\rm osc}_{\bar{h}}(r)$ is 
 the weighted data oscillation:  
\begin{equation}\label{def:oscbar}
    {\rm osc}_{\bar{h}}(r):=\|\bar{h}(R(r)-Q_hR(r))\|.
\end{equation}
\end{lemma}
\begin{proof}
Using \eqref{l2bd} and \eqref{approx1}, we have for $v\in\mathcal{H}^k,$ 
\begin{align*}
      &\|v-\Pi_h^kv\|_T\preccurlyeq h_T\|\nabla v\|_{\Omega_T^k}\leq\varepsilon^{-\frac{1}{2}}h_T\|v\|_{\mathcal{H}_w^k(\Omega_T^k)},\\
      &\|v-\Pi_h^kv\|_T\preccurlyeq \|v\|_{\Omega_T^k}\leq\kappa^{-\frac{1}{2}}\|v\|_{\mathcal{H}_w^k(\Omega_T^k)}.
\end{align*}
Therefore it holds that
\begin{equation}\label{l2wT}
    \|v-\Pi_h^kv\|_T\preccurlyeq \bar{h}_T\|v\|_{\mathcal{H}_w^k(\Omega_T^k)}.
\end{equation}
Similarly, it follows from the trace inequality
\begin{equation}
    \|\text{tr}\phi\|^2_S\preccurlyeq h^{-1}_S\|\phi\|_T^2+\|\phi\|_T\|\nabla \phi\|_T,\quad\forall \phi\in H^1\Lambda^k(T)
\end{equation}
with $S\subset\partial T$ and \eqref{l2wT} that 
\begin{equation}\label{l2wS}
    \|\text{tr}(v-\Pi_h^kv)\|_S\preccurlyeq\varepsilon^{-\frac{1}{4}}\bar{h}^\frac{1}{2}_s|_S\|v\|_{\mathcal{H}_w^k(\Omega_S^k)}.
\end{equation}
The rest of the proof follows from~\eqref{l2wT}, \eqref{l2wS} and
the arguments used in the proof of Lemma~\ref{resH1}.
\end{proof}
In addition, we mention that robust equilibrated estimators for \eqref{SPP} have been established in, e.g., \cite{AinsworthBabuska1999,SmearsVohralik2020}.

\begin{remark}
  In addition to error indicators provided by Lemmata \ref{resH1},
  \ref{esteta}, and \ref{resH1robust}, any a posteriori error estimate
  that estimates $H^{-1}$-norm of a residual can be used in the
  current framework. For instance, equilibrated residual estimators
  \cite{AinsworthOden2000,BankWeiser1985,MNS2003} are able to yield very tight bounds
  on the residual.
\end{remark}

\section{A posteriori error estimates in \emph{H}({\rm d})}\label{secHd}

In many important applications, the space $\mathcal{V}$ in \eqref{var} is more complicated than $\mathcal{H}_\Gamma$ or $\mathcal{H}^k$, the standard $H(\text{grad})$ space.
Let $\mathcal{W}^-$, $\mathcal{W}$ be auxiliary Hilbert spaces that are connected with $\mathcal{V}$ via bounded linear operators $\mathcal{D}: \mathcal{W}^-\rightarrow\mathcal{V}$ and $\mathcal{I}: \mathcal{W}\rightarrow\mathcal{V}$. We shall choose
$\mathcal{W}$, $\mathcal{W}^-$ in appropriate ways such that the Riesz representation operators $\mathcal{B}_\mathcal{W}$, $\mathcal{B}_{\mathcal{W}^-}$ are amenable to a posteriori error analysis. The next corollary is a direct consequence of Lemma \ref{FSP} and very useful for preconditioning $\mathcal{B}_{\mathcal{V}}$.
\begin{corollary}\label{FSP2}
Assume for any $v\in\mathcal{V}$, there exist $w^-\in\mathcal{W}^-$ and $w\in\mathcal{W}$, such that
\begin{align*}
    &v=\mathcal{D}w^-+\mathcal{I}w,\\
    &\|w^-\|_{\mathcal{W}^-}^2+\|w\|_{\mathcal{W}}^2\leq C_{\emph{stb}}\|v\|^2_{\mathcal{V}},
\end{align*}
where $C_{\emph{stb}}$ is a constant.
Let $$\mathcal{B}_a=\mathcal{D}\mathcal{B}_{\mathcal{W}^-}\mathcal{D}^*+\mathcal{I}\mathcal{B}_{\mathcal{W}}\mathcal{I}^*: \mathcal{V}^\prime\rightarrow\mathcal{V}.$$  
Then we have
\begin{equation*}
    \big(\|\mathcal{D}\|^2+\|\mathcal{I}\|^2\big)\langle r,\mathcal{B}_ar\rangle\leq \langle r, \mathcal{B}r\rangle\leq C_{\emph{stb}}\langle r,\mathcal{B}_ar\rangle,\quad\forall r\in\mathcal{V}^\prime.
\end{equation*}
\end{corollary}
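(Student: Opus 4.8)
The plan is to apply the Fictitious Space Lemma (Lemma~\ref{FSP}) with the auxiliary space $\bar{\mathcal{V}}:=\mathcal{W}^-\times\mathcal{W}$ equipped with the product inner product, the auxiliary SPD operator $\bar{\mathcal{B}}:=\mathcal{B}_{\bar{\mathcal{V}}}=\operatorname{diag}(\mathcal{B}_{\mathcal{W}^-},\mathcal{B}_{\mathcal{W}})$, the surjection $\Pi:=(\mathcal{D},\mathcal{I}):\bar{\mathcal{V}}\to\mathcal{V}$ given by $\Pi(w^-,w)=\mathcal{D}w^-+\mathcal{I}w$, and the preconditioner $\mathcal{B}:=\mathcal{B}_\mathcal{V}$, so that $\|\cdot\|_{\mathcal{B}^{-1}}=\|\cdot\|_\mathcal{V}$ and $\|\cdot\|_{\bar{\mathcal{B}}^{-1}}^2=\|\cdot\|_{\mathcal{W}^-}^2+\|\cdot\|_\mathcal{W}^2$. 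One then checks that $\widetilde{\mathcal{B}}=\Pi\bar{\mathcal{B}}\Pi^*$ is exactly $\mathcal{B}_a=\mathcal{D}\mathcal{B}_{\mathcal{W}^-}\mathcal{D}^*+\mathcal{I}\mathcal{B}_{\mathcal{W}}\mathcal{I}^*$, which is immediate from the block-diagonal form of $\bar{\mathcal{B}}$ and the definition of $\Pi^*=(\mathcal{D}^*,\mathcal{I}^*)^{\mathsf{T}}$.

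First I would verify the two hypotheses of Lemma~\ref{FSP}. For the first (continuity of $\Pi$), given $\bar{v}=(w^-,w)\in\bar{\mathcal{V}}$ we have $\|\Pi\bar{v}\|_\mathcal{V}=\|\mathcal{D}w^-+\mathcal{I}w\|_\mathcal{V}\le\|\mathcal{D}\|\,\|w^-\|_{\mathcal{W}^-}+\|\mathcal{I}\|\,\|w\|_\mathcal{W}$, and by Cauchy--Schwarz this is bounded by $\big(\|\mathcal{D}\|^2+\|\mathcal{I}\|^2\big)^{1/2}\big(\|w^-\|_{\mathcal{W}^-}^2+\|w\|_\mathcal{W}^2\big)^{1/2}$, so $c_0=\big(\|\mathcal{D}\|^2+\|\mathcal{I}\|^2\big)^{1/2}$. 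For the second (stable decomposition), the hypothesis of the corollary furnishes, for each $v\in\mathcal{V}$, a pair $(w^-,w)$ with $\Pi(w^-,w)=v$ and $\|w^-\|_{\mathcal{W}^-}^2+\|w\|_\mathcal{W}^2\le C_{\text{stab}}\|v\|_\mathcal{V}^2$, which is precisely the second bullet of Lemma~\ref{FSP} with $c_1=C_{\text{stab}}^{1/2}$; surjectivity of $\Pi$ follows from the same hypothesis. Lemma~\ref{FSP} then gives $c_0^{-2}\langle r,\widetilde{\mathcal{B}}r\rangle\le\langle r,\mathcal{B}r\rangle\le c_1^2\langle r,\widetilde{\mathcal{B}}r\rangle$ for all $r\in\mathcal{V}^\prime$, which upon substituting $\widetilde{\mathcal{B}}=\mathcal{B}_a$, $c_0^2=\|\mathcal{D}\|^2+\|\mathcal{I}\|^2$, $c_1^2=C_{\text{stab}}$, and $\mathcal{B}=\mathcal{B}_\mathcal{V}$ is exactly the claimed inequality.

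The only genuine point requiring care is the identification $\Pi\bar{\mathcal{B}}\Pi^*=\mathcal{B}_a$: one must unwind the definition of the adjoint $\Pi^*:\mathcal{V}^\prime\to\bar{\mathcal{V}}^\prime$ acting as $\Pi^*r=(\mathcal{D}^*r,\mathcal{I}^*r)$, observe that $\bar{\mathcal{B}}(\mathcal{D}^*r,\mathcal{I}^*r)=(\mathcal{B}_{\mathcal{W}^-}\mathcal{D}^*r,\mathcal{B}_{\mathcal{W}}\mathcal{I}^*r)$, and apply $\Pi$ to get $\mathcal{D}\mathcal{B}_{\mathcal{W}^-}\mathcal{D}^*r+\mathcal{I}\mathcal{B}_{\mathcal{W}}\mathcal{I}^*r$. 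Everything else is a routine specialization of Lemma~\ref{FSP}; I anticipate no real obstacle, since the abstract machinery has already done the work, and the corollary is essentially a bookkeeping statement that records the particular form taken by $\widetilde{\mathcal{B}}$ and the constants when the fictitious space is a two-factor product.

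\begin{proof}
Set $\bar{\mathcal{V}}:=\mathcal{W}^-\times\mathcal{W}$ with the product inner product, so that $\|(w^-,w)\|_{\bar{\mathcal{V}}}^2=\|w^-\|_{\mathcal{W}^-}^2+\|w\|_{\mathcal{W}}^2$, and let $\bar{\mathcal{B}}:=\mathcal{B}_{\bar{\mathcal{V}}}=\operatorname{diag}(\mathcal{B}_{\mathcal{W}^-},\mathcal{B}_{\mathcal{W}})$, which is SPD with $\|\cdot\|_{\bar{\mathcal{B}}^{-1}}=\|\cdot\|_{\bar{\mathcal{V}}}$. Define $\Pi:=(\mathcal{D},\mathcal{I}):\bar{\mathcal{V}}\to\mathcal{V}$ by $\Pi(w^-,w)=\mathcal{D}w^-+\mathcal{I}w$. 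Its adjoint is $\Pi^*r=(\mathcal{D}^*r,\mathcal{I}^*r)$ for $r\in\mathcal{V}^\prime$, whence
\begin{equation*}
\Pi\bar{\mathcal{B}}\Pi^*r=\mathcal{D}\mathcal{B}_{\mathcal{W}^-}\mathcal{D}^*r+\mathcal{I}\mathcal{B}_{\mathcal{W}}\mathcal{I}^*r=\mathcal{B}_ar.
\end{equation*}
Take $\mathcal{B}:=\mathcal{B}_\mathcal{V}$, so that $\|\cdot\|_{\mathcal{B}^{-1}}=\|\cdot\|_{\mathcal{V}}$. For any $\bar{v}=(w^-,w)\in\bar{\mathcal{V}}$, by the triangle inequality and Cauchy--Schwarz,
\begin{equation*}
\|\Pi\bar{v}\|_{\mathcal{V}}\leq\|\mathcal{D}\|\,\|w^-\|_{\mathcal{W}^-}+\|\mathcal{I}\|\,\|w\|_{\mathcal{W}}\leq\big(\|\mathcal{D}\|^2+\|\mathcal{I}\|^2\big)^{\frac{1}{2}}\|\bar{v}\|_{\bar{\mathcal{V}}},
\end{equation*}
so the first hypothesis of Lemma~\ref{FSP} holds with $c_0=\big(\|\mathcal{D}\|^2+\|\mathcal{I}\|^2\big)^{\frac{1}{2}}$. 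For each $v\in\mathcal{V}$, the assumed stable decomposition provides $w^-\in\mathcal{W}^-$, $w\in\mathcal{W}$ with $\mathcal{D}w^-+\mathcal{I}w=v$ and $\|w^-\|_{\mathcal{W}^-}^2+\|w\|_{\mathcal{W}}^2\leq C_{\text{stab}}\|v\|_{\mathcal{V}}^2$; thus $\bar{v}:=(w^-,w)$ satisfies $\Pi\bar{v}=v$ and $\|\bar{v}\|_{\bar{\mathcal{V}}}\leq C_{\text{stab}}^{\frac{1}{2}}\|v\|_{\mathcal{V}}$, which is the second hypothesis with $c_1=C_{\text{stab}}^{\frac{1}{2}}$; in particular $\Pi$ is surjective. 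Lemma~\ref{FSP} now yields
\begin{equation*}
c_0^{-2}\langle r,\mathcal{B}_ar\rangle\leq\langle r,\mathcal{B}_\mathcal{V}r\rangle\leq c_1^2\langle r,\mathcal{B}_ar\rangle,\quad\forall r\in\mathcal{V}^\prime,
\end{equation*}
which is the assertion upon inserting $c_0^2=\|\mathcal{D}\|^2+\|\mathcal{I}\|^2$ and $c_1^2=C_{\text{stab}}$.
\qed\end{proof}
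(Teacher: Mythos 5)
Your proof is correct and follows exactly the paper's (very terse) argument: apply Lemma~\ref{FSP} with $\bar{\mathcal{V}}=\mathcal{W}^-\times\mathcal{W}$ and $\Pi(w^-,w)=\mathcal{D}w^-+\mathcal{I}w$, with your write-up simply supplying the routine verifications the paper omits. Note that what you (correctly) derive is the lower bound with constant $\big(\|\mathcal{D}\|^2+\|\mathcal{I}\|^2\big)^{-1}$, i.e.\ $c_0^{-2}$, whereas the corollary as printed omits the inverse --- this is a typo in the paper's statement, not a flaw in your argument.
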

\begin{proof}
In Lemma \ref{FSP}, we take $\bar{\mathcal{V}}=\mathcal{W}^-\times\mathcal{W}$, $\Pi: \bar{\mathcal{V}}\rightarrow\mathcal{V}$ as $\Pi(w^-,w)=\mathcal{D}w^-+\mathcal{I}w$ and complete the proof.
\end{proof}
The stable splitting in Corollary \ref{FSP2} is motivated by the
regular and the Helmholtz decompositions; see, e.g.,
\cite{ArnoldFalkWinther2000,DemlowHirani2014,Hiptmair2002}. Typically, when applying the result
from Corollary \ref{FSP2}, $\mathcal{D}$ will be a differential operator
and $\mathcal{I}$ will be a natural inclusion.

\subsection{Standard \emph{H}(d) problems}
Let 
$$\varepsilon, \kappa\in L^\infty(\Omega)\cap H^1(\mathcal{T}_h),\quad f\in H^1\Lambda^k(\mathcal{T}_h).$$ 
For $1\leq k\leq n-1$, we assume the following problem is well-posed
\begin{align*}
    \delta_{k+1}(\varepsilon {\rm d}_ku)+\kappa u&=f\quad\text{in }\Omega,\\
    \text{tr}u&=0\quad\text{on }\Gamma.
\end{align*}
The variational problem is: Find $u\in\mathcal{V}^k$ such that
\begin{equation}\label{Hdvar}
    (\varepsilon{\rm d}_ku,{\rm d}_kv)+(\kappa u,v)=(f,v),\quad\forall v\in\mathcal{V}^k.
\end{equation}
The discrete problem seeks $u_h\in\mathcal{V}^k_h$ such that
\begin{equation}\label{disHdvar}
    (\varepsilon{\rm d}_ku_h,{\rm d}_k v)+(\kappa u_h,v)=(f,v),\quad\forall v\in\mathcal{V}_h^k.
\end{equation}
The residual $\mathcal{R}^k\in (\mathcal{V}^k)^\prime$ is
\begin{equation*}
    \langle \mathcal{R}^k,v\rangle=(f,v)-(\varepsilon{\rm d} _ku_h,{\rm d}_k v)-(\kappa u_h,v),\quad\forall v\in\mathcal{V}^k.
\end{equation*}
It then follows from \eqref{reserror2} that
\begin{equation}\label{Hderr0}
    \|u-u_h\|^2_{\mathcal{V}^k}\eqsim\langle \mathcal{R}^k,\mathcal{B}_{\mathcal{V}^k}\mathcal{R}^k\rangle
\end{equation}
The next theorem states the regular decomposition, see \cite{DHL1999,DemlowHirani2014,GopQiu2012,Hiptmair2002,PasciakZhao2002}.
\begin{theorem}[Regular Decomposition]\label{reg}
For $1\leq k\leq n-1$ and any $v\in \mathcal{V}^k,$ there exist $\varphi\in \mathcal{H}^{k-1}, z\in\mathcal{H}^k,$ such that 
\begin{align*}
    &v={\rm d}_{k-1}\varphi+z,\\
    &\|z\|_{H^1}\leq C_{\emph{reg}}\|{\rm d}_kv\|,\\
    &\|\varphi\|_{H^1}\leq C_{\emph{reg}}\|v\|_{\mathcal{V}^k},
\end{align*}
where $C_{\emph{reg}}$ is a constant dependent only on $\Omega$, $\Gamma.$
\end{theorem}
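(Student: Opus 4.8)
The plan is to build the two $H^1$ potentials from a regularized integral homotopy for the exterior derivative, the route taken in \cite{Hip2002,PZ2002,GopQiu2012,DH2014}. The ingredient I would quote---this is what puts the ``regular'' in regular decomposition, and ultimately rests on the Bogovskii/Costabel--McIntosh regularized Poincar\'e operators on Lipschitz domains together with the admissibility of $(\Omega,\Gamma)$ from \cite{GolMM2011,Licht2019}---is a family of bounded operators $R_j\colon L^2\Lambda^j(\Omega)\to H^1\Lambda^{j-1}(\Omega)$, compatible with the essential trace condition on $\Gamma$ (so that $R_j$ maps $\mathcal V^j$ into $\mathcal H^{j-1}$), with $\|R_j\omega\|_{H^1}\lesssim\|\omega\|$ and the homotopy identity $d_{j-1}R_j+R_{j+1}d_j=\mathrm{id}$ for $j\ge1$. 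Everything after that is a short homological computation.

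Given $v\in\mathcal V^k$, $1\le k\le n-1$, set $z:=R_{k+1}(d_kv)\in\mathcal H^k$. Since $d_{k+1}d_kv=0$, the homotopy identity at level $k+1$ reduces to $d_kz=d_kR_{k+1}(d_kv)=d_kv$, while boundedness of $R_{k+1}$ gives $\|z\|_{H^1}\lesssim\|d_kv\|$. Consequently $d_k(v-z)=0$, so the homotopy identity at level $k$ applied to the closed form $v-z$ collapses to $v-z=d_{k-1}R_k(v-z)$; putting $\varphi:=R_k(v-z)\in\mathcal H^{k-1}$ yields $v=d_{k-1}\varphi+z$, and from $\|v-z\|\le\|v\|+\|z\|_{H^1}\lesssim\|v\|+\|d_kv\|\simeq\|v\|_{\mathcal V^k}$ we get $\|\varphi\|_{H^1}\lesssim\|v-z\|\lesssim\|v\|_{\mathcal V^k}$. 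Collecting the constants, which depend only on $\Omega,\Gamma$ through the $R_j$'s and the Poincar\'e constant, gives $C_{\mathrm{reg}}$. A shortcut valid when the hypotheses behind \eqref{qzH1} hold with $\Omega$ convex (so $\mathfrak H^k=\{0\}$ for $1\le k\le n-1$) avoids the $R_j$ altogether: from the Hodge decomposition \eqref{Hodge} write $v=d_{k-1}\varphi+s$ with $\varphi\in\mathfrak Z^{(k-1)\perp}$, $s\in\mathfrak Z^{k\perp}$, and set $z:=s$; then $d_ks=d_kv$, the Poincar\'e inequality \eqref{Poincare} gives $\|\varphi\|\lesssim\|d_{k-1}\varphi\|\le\|v\|$ and $\|s\|\lesssim\|d_kv\|$, and \eqref{qzH1} (modulo the standard check that the $H^1$ part inherits the trace condition on $\Gamma$) upgrades $\varphi$ and $s$ to elements of $\mathcal H^{k-1}$ and $\mathcal H^k$ with the stated bounds.

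The main obstacle is confined to the quoted ingredient: producing operators that simultaneously invert $d$ up to the homotopy term, gain a full derivative ($L^2\to H^1$), and respect the boundary condition on $\Gamma\subseteq\partial\Omega$---i.e., the regularized Poincar\'e/Bogovskii theory and, for a proper subset $\Gamma$, its mixed-boundary refinement within the admissibility framework. The homological part above is routine: one application of the homotopy identity in each of two degrees and two triangle inequalities. I would also flag the tacit assumption $\mathfrak H^k=\{0\}$: were the harmonic space nontrivial, a nonzero harmonic $v$ would have $d_kv=0$ yet not be exact, so the bound $\|z\|_{H^1}\le C_{\mathrm{reg}}\|d_kv\|$ would force $z=0$ and hence $v=d_{k-1}\varphi$, a contradiction; thus the statement presumes trivial cohomology in degree $k$ (which holds, e.g., on convex $\Omega$ for $1\le k\le n-1$), or that the harmonic component has been subtracted beforehand.
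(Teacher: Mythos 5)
The paper never proves Theorem \ref{reg}; it is imported from the literature with the citation list \cite{BHL1999,DH2014,GopQiu2012,Hip2002,PZ2002}, so there is no in-paper argument to measure yours against. Your reconstruction via regularized homotopy operators $R_j$ with $\|R_j\omega\|_{H^1}\preccurlyeq\|\omega\|$ and $d_{j-1}R_j+R_{j+1}d_j=\mathrm{id}$ is precisely the route of those references, and the homological bookkeeping you layer on top of that quoted ingredient is correct. Two comments. First, your flag about harmonic forms is right and worth taking seriously: the identity $d_{j-1}R_j+R_{j+1}d_j=\mathrm{id}$ already forces every closed form to be exact, so the quoted ingredient itself presupposes trivial relative cohomology of $(\Omega,\Gamma)$ in the relevant degree, and, as you observe, a nonzero $v\in\mathfrak{H}^k$ is incompatible with the bound $\|z\|_{H^1}\le C_{\mathrm{reg}}\|d_kv\|$. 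The paper's own appendix corroborates this reading: the robust analogue (Theorem \ref{robustHodge}) is proved only with $|z|_{H^1}\preccurlyeq\|v\|_{\mathcal V^k}$, and the sharper bound $|z|_{H^1}\preccurlyeq\|d_kv\|$ is recovered in the corollary following that proof only under the explicit hypothesis $\mathfrak{H}^k=\{0\}$. Note also that for the only use the paper makes of Theorem \ref{reg} --- the stable splitting feeding Corollary \ref{FSP2} and the equivalence \eqref{Bka} --- the weaker bound $\|z\|_{H^1}\preccurlyeq\|v\|_{\mathcal V^k}$ suffices, so nothing downstream depends on the sharp form. Second, your Hodge-decomposition shortcut via \eqref{Hodge}, \eqref{Poincare} and \eqref{qzH1} is essentially the strategy the paper deploys in the appendix for Theorem \ref{robustHodge}, except that there the embedding \eqref{qzH1} is applied on a smoothly bounded enlarged domain $\Omega^e$ precisely because on a merely Lipschitz $\Omega$ with mixed boundary conditions \eqref{qzH1} is unavailable; so that extension step (or the mixed-boundary regularized Poincar\'e operators of \cite{GopQiu2012,Licht2019}) is not optional in the general case you are asked to cover.
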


Theorem \ref{reg} suggests a preconditioner for $\mathcal{B}_{\mathcal{V}^k}$. Let $\mathcal{V}=\mathcal{V}^k,$ $\mathcal{W}^-=\mathcal{H}^{k-1}$, $\mathcal{W}=\mathcal{H}^k,$ $\mathcal{D}={\rm d}_{k-1}: \mathcal{H}^{k-1}\rightarrow \mathcal{V}^k$, and $\mathcal{I}=\mathcal{I}_{\mathcal{H}^k}: \mathcal{H}^k\hookrightarrow \mathcal{V}^k$ be the inclusion in Corollary \ref{FSP2}, where the stable splitting assumption is readily confirmed  by Theorem \ref{reg}. Therefore, we obtain the spectral equivalence
\begin{equation}\label{Bka}
    \mathcal{B}_{\mathcal{V}^k}\eqsim{\rm d}_{k-1}\mathcal{B}_{\mathcal{H}^{k-1}}{\rm d}_{k-1}^*+\mathcal{I}_{\mathcal{H}^{k}}\mathcal{B}_{\mathcal{H}^k}\mathcal{I}_{\mathcal{H}^k}^*.
\end{equation}
Collecting \eqref{Hderr0} and \eqref{Bka}, the nodal auxiliary a posteriori error estimate for \eqref{disHdvar} reads
\begin{equation}\label{Hderr1}
    \|u-u_h\|^2_{\mathcal{V}^k}\eqsim\langle {\rm d}_{k-1}^*\mathcal{R}^k,\mathcal{B}_{\mathcal{H}^{k-1}}{\rm d}_{k-1}^*\mathcal{R}^k\rangle+\langle\mathcal{I}_{\mathcal{H}^k}^*\mathcal{R}^k,\mathcal{B}_{\mathcal{H}^k}\mathcal{I}_{\mathcal{H}^k}^*\mathcal{R}^k\rangle.
\end{equation}
In what follows, we use results in Section \ref{secHgrad} to estimate the $H^{-1}$ residuals $d_{k-1}^*\mathcal{R}^k$ and $\mathcal{I}_{\mathcal{H}^k}^*\mathcal{R}^k$. For each $T\in \mathcal{T}_h$ and $S\in\mathcal{S}_h$, let
\begin{equation}\label{RJk}
    \begin{aligned}
    &R^k_1|_T=\delta_k(f-\kappa u_h)|_T,\\
    &J^k_1|_S=\llbracket\text{tr}\star(f-\kappa u_h)\rrbracket_S,\\
    &R^k_2|_T=(f-\delta_{k+1}(\varepsilon {\rm d}_ku_h)-\kappa u_h)|_T,\\
    &J^k_2|_S=-\llbracket\text{tr}\star\varepsilon {\rm d}_ku_h\rrbracket_S.
\end{aligned}
\end{equation}
Using ${\rm d}_k\circ {\rm d}_{k-1}=0$ and the Stokes formula \eqref{Stokes} element-wise, we obtain the $L^2$ representation of ${\rm d}_{k-1}^*\mathcal{R}^k\in(\mathcal{H}^{k-1})^\prime$ 
\begin{equation}\label{dstarR}
    \langle {\rm d}_{k-1}^*\mathcal{R}^k,v\rangle=\langle\mathcal{R}^k,{\rm d}_{k-1}v\rangle=(R^k_1,v)+(J^k_1,\text{tr}v)_{\mathcal{S}_h},\quad\forall v\in\mathcal{H}^{k-1}.
\end{equation}
Similarly, the $L^2$ representation of $\mathcal{I}_{\mathcal{H}^k}^*\mathcal{R}^k\in(\mathcal{H}^k)^\prime$ is
\begin{equation}\label{IHR}
    \langle \mathcal{I}_{\mathcal{H}^k}^*\mathcal{R}^k,v\rangle=\langle \mathcal{R}^k,v\rangle=(R_2^k,v)+(J_2^k,\text{tr}v)_{\mathcal{S}_h},\quad\forall v\in\mathcal{H}^k.
\end{equation}
Collecting previous results, we obtain the first main result.
\begin{theorem}\label{Hdestres}
For $1\leq k\leq n-1$, we have
\begin{align*}
    &\|u-u_h\|_{\mathcal{V}^k}\lesssim\|hR^k_1\|+\|h_s^\frac{1}{2}J^k_1\|_{\mathcal{S}_h}+\|hR_2^k\|+\|h_s^\frac{1}{2}J_2^k\|_{\mathcal{S}_h}.
\end{align*}
and
\begin{align*}
    &\|hR^k_1\|+\|h_s^\frac{1}{2}J^k_1\|_{\mathcal{S}_h}+\|hR_2^k\|+\|h_s^\frac{1}{2}J_2^k\|_{\mathcal{S}_h}\\
    &\quad\lesssim\|u-u_h\|_{\mathcal{V}^k}+{\rm osc}_h({\rm d}_{k-1}^*\mathcal{R}^k)+{\rm osc}_h(\mathcal{I}_{\mathcal{H}^k}^*\mathcal{R}^k).
\end{align*}
\end{theorem}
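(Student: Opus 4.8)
The plan is to combine the error--residual equivalence \eqref{Hderr1} with the $H^{-1}$ residual estimates of Section~\ref{secHgrad}, applied to each of the two residuals $d_{k-1}^*\mathcal{R}^k$ and $\mathcal{I}_{\mathcal{H}^k}^*\mathcal{R}^k$. First I would record that by Galerkin orthogonality \eqref{disHdvar}, both residuals vanish on the appropriate lowest order discrete spaces: $\langle\mathcal{R}^k,v_h\rangle=0$ for $v_h\in\mathcal{V}^k_h$, and in particular for $v_h\in\mathcal{V}^{k,0}_h$; moreover $\langle d_{k-1}^*\mathcal{R}^k,w_h\rangle=\langle\mathcal{R}^k,d_{k-1}w_h\rangle=0$ whenever $d_{k-1}w_h\in\mathcal{V}^k_h$, which holds for $w_h\in\mathcal{V}^{k-1,0}_h$ since the lowest order spaces form a subcomplex. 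Thus the hypotheses of Lemma~\ref{resH1} are met for $r=d_{k-1}^*\mathcal{R}^k\in(\mathcal{H}^{k-1})^\prime$ with $L^2$ representation $(R^k_1,J^k_1)$ from \eqref{dstarR}, and for $r=\mathcal{I}_{\mathcal{H}^k}^*\mathcal{R}^k\in(\mathcal{H}^k)^\prime$ with representation $(R^k_2,J^k_2)$ from \eqref{IHR}.

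Next I would invoke Lemma~\ref{resH1} twice. The upper bounds give
\[
  \langle d_{k-1}^*\mathcal{R}^k,\mathcal{B}_{\mathcal{H}^{k-1}}d_{k-1}^*\mathcal{R}^k\rangle^{1/2}\preccurlyeq\|hR^k_1\|+\|h_s^{1/2}J^k_1\|_{\mathcal{S}_h},
\]
\[
  \langle \mathcal{I}_{\mathcal{H}^k}^*\mathcal{R}^k,\mathcal{B}_{\mathcal{H}^k}\mathcal{I}_{\mathcal{H}^k}^*\mathcal{R}^k\rangle^{1/2}\preccurlyeq\|hR^k_2\|+\|h_s^{1/2}J^k_2\|_{\mathcal{S}_h},
\]
and plugging these into \eqref{Hderr1} and taking square roots (using $(a+b)^{1/2}\le a^{1/2}+b^{1/2}$, with the hidden constant in $\approx$ depending on $\varepsilon,\kappa$ hence absorbed into $\lesssim$) yields the reliability bound. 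For the efficiency (lower) bound I would apply the lower halves of Lemma~\ref{resH1}, namely $\|hR^k_1\|+\|h_s^{1/2}J^k_1\|_{\mathcal{S}_h}\preccurlyeq\|d_{k-1}^*\mathcal{R}^k\|_{\mathcal{B}_{\mathcal{H}^{k-1}}}+\mathrm{osc}_h(d_{k-1}^*\mathcal{R}^k)$ and likewise for the second pair, then use \eqref{Hderr1} again to bound $\|d_{k-1}^*\mathcal{R}^k\|_{\mathcal{B}_{\mathcal{H}^{k-1}}}+\|\mathcal{I}_{\mathcal{H}^k}^*\mathcal{R}^k\|_{\mathcal{B}_{\mathcal{H}^k}}\lesssim\|u-u_h\|_{\mathcal{V}^k}$, adding the two oscillation terms; summing the two lower estimates gives the claimed bound.

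The main obstacle, which is really a bookkeeping point rather than a deep difficulty, is verifying the orthogonality $\langle d_{k-1}^*\mathcal{R}^k,v_h\rangle=0$ for $v_h\in\mathcal{V}^{k-1,0}_h$: one must check that the lowest order Arnold--Falk--Winther spaces $\mathcal{V}^{\bullet,0}_h=\mathcal{P}^-_1\Lambda^\bullet(\mathcal{T}_h,\Gamma)$ genuinely form a subcomplex, i.e.\ $d_{k-1}\mathcal{V}^{k-1,0}_h\subseteq\mathcal{V}^{k,0}_h$, so that Galerkin orthogonality transfers. A secondary technical point is confirming that the $L^2$ representations in \eqref{dstarR} and \eqref{IHR} are indeed valid for test functions in the \emph{full} spaces $\mathcal{H}^{k-1}$, $\mathcal{H}^k$ and not merely their smooth subspaces; this follows from the density of $H^1\Lambda$ forms and the fact that $f-\kappa u_h$ and $\varepsilon d_k u_h$ are piecewise $H^1$, so the element-wise Stokes formula \eqref{Stokes} applies and the boundary contributions assemble into the jump terms $J^k_1,J^k_2$ on $\mathcal{S}_h$ (with the $\Gamma$-boundary pieces annihilated by the trace constraint). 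Everything else is a direct substitution into already-proved lemmas.
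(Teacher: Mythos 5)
Your proposal is correct and follows essentially the same route as the paper: verify the vanishing of $d_{k-1}^*\mathcal{R}^k$ on $\mathcal{V}^{k-1,0}_h$ (via $d_{k-1}\mathcal{V}^{k-1,0}_h\subseteq\mathcal{V}^{k,0}_h\subseteq\mathcal{V}^k_h$) and of $\mathcal{I}_{\mathcal{H}^k}^*\mathcal{R}^k$ on $\mathcal{V}^{k,0}_h$, apply Lemma~\ref{resH1} to each residual with the $L^2$ representations \eqref{dstarR} and \eqref{IHR}, and combine with \eqref{Hderr1}. The extra bookkeeping points you raise (subcomplex property, validity of the elementwise Stokes formula) are exactly the facts the paper relies on implicitly, so nothing is missing.
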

\begin{proof}
Using ${\rm d}_{k-1}\mathcal{V}^{k-1,0}_h\subseteq\mathcal{V}_h^{k,0}\subseteq\mathcal{V}_h^{k}$ and \eqref{disHdvar}, we have $\langle {\rm d}_{k-1}^*\mathcal{R}^k,v_h\rangle=0$ $\forall v_h\in\mathcal{V}^{k-1,0}_h$. Meanwhile \eqref{disHdvar} implies $\mathcal{I}_{\mathcal{H}^k}^*\mathcal{R}^k$ vanishes in $\mathcal{V}^{k,0}_h\subseteq\mathcal{V}_h^k$. It then follows from Lemma \ref{resH1} that
\begin{align*}
    &\|hR^k_1\|+\|h_s^\frac{1}{2}J^k_1\|_{\mathcal{S}_h}-\text{osc}_h({\rm d}_{k-1}^*\mathcal{R}^k)\\
    &\quad\lesssim\|{\rm d}_{k-1}^*\mathcal{R}^k\|_{\mathcal{B}_{\mathcal{H}^{k-1}}}\lesssim\|hR^k_1\|+\|h_s^\frac{1}{2}J^k_1\|_{\mathcal{S}_h},\\
    &\|hR^k_2\|+\|h_s^\frac{1}{2}J^k_2\|_{\mathcal{S}_h}-\text{osc}_h(\mathcal{I}_{\mathcal{H}^k}^*\mathcal{R}^k)\\
    &\quad\lesssim\|\mathcal{I}_{\mathcal{H}^k}^*\mathcal{R}^k\|_{\mathcal{B}_{\mathcal{H}^k}}\lesssim\|hR^k_2\|+\|h_s^\frac{1}{2}J^k_2\|_{\mathcal{S}_h}.
\end{align*}
Combining previous inequalities with \eqref{Hderr1} completes the proof.
\end{proof}

Recall $\mathcal{H}^k_i:=\{v\in\mathcal{H}^k: v=0\text{ on }\Omega\backslash\overline{\Omega}_i\}$ and note that $\mathcal{H}_h^{k,1}\subseteq\mathcal{V}_h^k$ if $\mathcal{V}_h^k\neq\mathcal{P}_0^-\Lambda^k(\mathcal{T}_h,\Gamma).$ As a consequence of \eqref{Hderr1} and Lemma \ref{esteta}, we obtain a new implicit error indicator for the $H({\rm d})$ problem based on solving local  Dirichlet  problems.
\begin{theorem}\label{Hdesteta}
For $1\leq k\leq n-1,$ let $\mathcal{V}_h^k\neq\mathcal{P}_0^-\Lambda^k(\mathcal{T}_h,\Gamma).$  Then we have
\begin{equation*}
    \|u-u_h\|^2_{\mathcal{V}^k}\eqsim\sum_{i=1}^N\left(\|\eta_i\|^2_{H^1(\Omega)}+\|\zeta_i\|^2_{H^1(\Omega)}\right),
\end{equation*}
where $\eta_i\in\mathcal{H}^{k-1}_i$ solves
\begin{equation}\label{local1}
    (\eta_i,\phi)+(\nabla \eta_i,\nabla\phi)=(f-\kappa u_h,{\rm d}_{k-1}\phi),\quad\forall\phi\in\mathcal{H}^{k-1}_i,
\end{equation}
and 
$\zeta_i\in\mathcal{H}^k_i$ solves
\begin{equation}\label{local2}
    (\zeta_i,\phi)+(\nabla \zeta_i,\nabla\phi)=(f,\phi)-(\varepsilon {\rm d}_ku_h,{\rm d}_k\phi)-(\kappa u_h,\phi),\quad\forall\phi\in\mathcal{H}^k_i.
\end{equation}
\end{theorem}

\begin{remark}
{Although the explicit residual estimator in Theorem \ref{Hdestres} is simple, it is difficult to balance the element-wise residual $\|hR^k_i\|$ and inter-element residual $\|\sqrt{h_s}J^k_i\|_{\mathcal{S}_h}$, $i=1, 2$. Information and possible cancellation might be lost by lumping those terms using a single multiplicative constant; see Chapter 3 of \cite{AinsworthOden2000}. As for the implicit residual estimator in Theorem \ref{Hdesteta}, volume and face residuals are hidden and naturally balanced in the right side of \eqref{local1} and \eqref{local2}. However, the equivalence in Theorem \ref{Hdesteta} still relies on some unknown multiplicative constant. 
% One future research direction is to investigate the connection between equilibrated a posteriori error estimates with explicit constants and preconditioning by non-overlapping domain decomposition.
These observations seem to motivate, although beyond the scope of this work, a future investigation of the connections between equilibrated a posteriori error estimates with explicit constants and preconditioning by non-overlapping domain decomposition methods. }
\end{remark}

\subsection{Singularly perturbed \emph{H}(d) problems}\label{sec:singularHd}
In this subsection, we focus on the case when $\varepsilon$ and
$\kappa$ are positive constants.  If $\varepsilon\ll1$ or
$\kappa\gg1$, then \eqref{Hdvar} becomes singularly perturbed. In this case,
the estimators in Theorems \ref{Hdestres} and \ref{Hdesteta}
deteriorate as $\varepsilon\rightarrow0$ or $\kappa\rightarrow\infty$
because the continuity and inf-sup constants of \eqref{Hdvar} are not
uniformly bounded. We note
that \eqref{Hdvar} with $k=0$ reduces to the classical singularly
perturbed second order elliptic equation \eqref{SPP}. To derive robust a posteriori estimates, it is
necessary to use the weighted space $\mathcal{V}_w^k,$ which consists
of the same elements in $\mathcal{V}^k,$ but is equipped with the
weighted inner product
\begin{equation*}
    (v,\phi)_{\mathcal{V}_w^k}=\varepsilon ({\rm d}_kv,{\rm d}_k\phi)+\kappa(v,\phi),\quad\forall v,\phi\in\mathcal{V}_w^k.
\end{equation*}
As a result, we obtain that the continuity and stability constants of \eqref{Hdvar} equal $1$, i.e., we have
    \begin{equation}\label{Hderr0w}
    \|u-u_h\|^2_{\mathcal{V}_w^k}=\langle \mathcal{R}^k,\mathcal{B}_{\mathcal{V}_w^k}\mathcal{R}^k\rangle.
\end{equation}

Let $\kappa\mathcal{H}^{k-1}$ consist of the same elements in $\mathcal{H}^{k-1}$ and be equipped with the inner product $\kappa(\cdot,\cdot)_{\mathcal{H}^{k-1}}$. Let $\mathcal{I}_{\mathcal{H}_w^k}: \mathcal{H}_w^k\hookrightarrow \mathcal{V}_w^k$ be the inclusion. Using Theorem \ref{reg} and following the same analysis for \eqref{Bka}, it is straightforward to check that the uniform spectral equivalence holds if $\kappa\leq\varepsilon$: 
\begin{equation}\label{Bkaw1}
    \mathcal{B}_{\mathcal{V}^k_w}\approx {\rm d}_{k-1}\mathcal{B}_{\kappa\mathcal{H}^{k-1}}{\rm d}_{k-1}^*+\mathcal{I}_{\mathcal{H}_w^{k}}\mathcal{B}_{\mathcal{H}_w^k}\mathcal{I}_{\mathcal{H}_w^k}^*.
\end{equation}
However, the regular decomposition is not suitable for the singularly perturbed case ($\kappa\gg\varepsilon$). Alternatively, we present an new stable decomposition of $\mathcal{V}^k$. The proof is technical and is given in the appendix.
\begin{theorem}[Robust Regular Decomposition]\label{robustHodge}
For $1\leq k\leq n-1$ and any $v\in\mathcal{V}^k$, there exist $\varphi\in \mathcal{H}^{k-1}$ and $z\in\mathcal{H}^k,$ such that 
\begin{align*}
    &v={\rm d}_{k-1}\varphi+z,\\
    &\|\varphi\|_{H^1}+\|z\|\leq C_\emph{rb}\|v\|,\\
    &|z|_{H^1}\leq C_\emph{rb} \|v\|_{\mathcal{V}^k},
\end{align*}
where $C_\emph{rb}$ is a constant depending on $\Omega$, $\Gamma.$
\end{theorem}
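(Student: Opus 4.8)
The plan is to reduce the Robust Regular Decomposition to the ordinary Regular Decomposition (Theorem~\ref{reg}) applied to a carefully chosen $L^2$-modification of $v$, followed by a stabilized ``correction'' step that restores the $L^2$-control of $\varphi$ and the $L^2$-control of $z$ without destroying the $H^1$-seminorm bound $|z|_{H^1}\lesssim\|v\|_{\mathcal{V}^k}$. The key point is that the classical decomposition $v=d_{k-1}\varphi+z$ only controls $\|\varphi\|_{H^1}$ by $\|v\|_{\mathcal{V}^k}=\big(\|v\|^2+\|d_kv\|^2\big)^{1/2}$ and $\|z\|_{H^1}$ by $\|d_kv\|$, whereas we now need $\|\varphi\|_{H^1}+\|z\|\lesssim\|v\|$ (no $\|d_kv\|$) \emph{and simultaneously} $|z|_{H^1}\lesssim\|v\|_{\mathcal{V}^k}$. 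So the decomposition must split $v$ along the Hodge decomposition \eqref{Hodge}: write $v=v_{\mathfrak B}+v_{\mathfrak H}+v_\perp$ with $v_{\mathfrak B}\in d_{k-1}\mathcal{V}^{k-1}$, $v_{\mathfrak H}\in\mathfrak{H}^k$, $v_\perp\in\mathfrak{Z}^{k\perp}$. The component $v_\perp$ satisfies $d_kv_\perp=d_kv$ and, by Poincaré \eqref{Poincare}, $\|v_\perp\|\le c_P\|d_kv\|$; and by \eqref{qzH1}, $v_\perp\in H^1\Lambda^k$ with $|v_\perp|_{H^1}\lesssim\|v_\perp\|+\|d_kv\|\lesssim\|d_kv\|$. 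The harmonic part $v_{\mathfrak H}\in\mathfrak H^k\subset H^1\Lambda^k$ with $\|v_{\mathfrak H}\|_{H^1}\lesssim\|v_{\mathfrak H}\|\le\|v\|$.

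The remaining exact part $v_{\mathfrak B}=d_{k-1}\psi$ for some $\psi$ is the delicate piece: a priori one only controls $\|v_{\mathfrak B}\|\le\|v\|$, and naive potential estimates would cost a factor of the Poincaré constant on a complex, losing the needed bound. First I would pick $\psi\in\mathfrak{Z}^{k-1\perp}$ to be the minimal-norm potential, so $\|\psi\|\le c_P\|d_{k-1}\psi\|=c_P\|v_{\mathfrak B}\|\lesssim\|v\|$ and, again by \eqref{qzH1} applied at level $k-1$, $\psi\in H^1\Lambda^{k-1}$ with $\|\psi\|_{H^1}\lesssim\|\psi\|+\|d_{k-1}\psi\|\lesssim\|v\|$. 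Now set $\varphi:=\psi\in\mathcal H^{k-1}$ and $z:=v_{\mathfrak H}+v_\perp\in\mathcal H^k$, giving $v=d_{k-1}\varphi+z$. Then $\|\varphi\|_{H^1}\lesssim\|v\|$, $\|z\|\le\|v_{\mathfrak H}\|+\|v_\perp\|\lesssim\|v\|$ (using $\|v_\perp\|\le c_P\|d_kv\|$ is \emph{not} what we want here — instead use $\|v_\perp\|\le\|v\|$ since $v_\perp$ is an $L^2$-orthogonal component of $v$), so $\|\varphi\|_{H^1}+\|z\|\lesssim\|v\|$; and $|z|_{H^1}\le|v_{\mathfrak H}|_{H^1}+|v_\perp|_{H^1}\lesssim\|v\|+\|d_kv\|\lesssim\|v\|_{\mathcal V^k}$. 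This yields all three bounds with $C_{\mathrm{rb}}$ depending only on $c_P$, $C_q$, $C_z$, hence only on $\Omega,\Gamma$.

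The main obstacle I anticipate is the hypothesis under which \eqref{qzH1} holds: as stated in the excerpt it requires $\Gamma=\emptyset$ or $\Gamma=\partial\Omega$ together with convexity or smoothness of $\Omega$, whereas Theorem~\ref{robustHodge} is claimed for general admissible $(\Omega,\Gamma)$ and Lipschitz $\Omega$. To cover the Lipschitz / mixed-boundary case one must replace the global $H^1$-regularity of $\mathfrak H^k$ and $\mathfrak Z^{k\perp}$ by a \emph{regular decomposition of those subspaces themselves}: every $w\in\mathfrak Z^{k\perp}$ can be written $w=d_{k-1}\theta+\rho$ with $\rho\in H^1\Lambda^k$, $\|\rho\|_{H^1}\lesssim\|d_kw\|$ and $\theta\in H^1\Lambda^{k-1}$, $\|\theta\|_{H^1}\lesssim\|w\|$ (this is exactly Theorem~\ref{reg} again, and it holds on Lipschitz domains), and similarly the finite-dimensional $\mathfrak H^k$ needs its own regular decomposition (its elements are smooth enough away from the boundary but may fail to be globally $H^1$ on rough domains, so one absorbs the bad part into the $d_{k-1}(\cdot)$ term). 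Thus the honest proof iterates Theorem~\ref{reg}: apply it to $v$ to get $v=d_{k-1}\varphi_0+z_0$ with $z_0\in H^1$ and $\|z_0\|_{H^1}\lesssim\|d_kv\|$, $\|\varphi_0\|_{H^1}\lesssim\|v\|_{\mathcal V^k}$; then the defect is that $\|\varphi_0\|_{H^1}$ is only bounded by $\|v\|_{\mathcal V^k}$, not $\|v\|$. Correct this by applying Theorem~\ref{reg} once more to the \emph{exact} form $v-z_0=d_{k-1}\varphi_0\in d_{k-1}\mathcal V^{k-1}$, which lies in $\mathfrak Z^k$ and hence has a potential of $L^2$-norm $\lesssim\|v-z_0\|\le\|v\|+\|z_0\|$; but $\|z_0\|$ is again only controlled by $\|d_kv\|$. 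The resolution is to split $v$ \emph{first} by the $L^2$-orthogonal Hodge projection as above so that the ``reaction-dominated'' remainder is genuinely $O(\|v\|)$ in $L^2$, and only then invoke Theorem~\ref{reg} on the three pieces separately, using that $d_k$ kills $v_{\mathfrak B}$ and $v_{\mathfrak H}$ so those pieces produce $H^1$ potentials/forms of size $\|v\|$, while $v_\perp$ produces a $z$-part of size $\|d_kv\|$ in $H^1$-seminorm but only $\|v\|$ in $L^2$. Carefully tracking which norm bounds which piece — in particular never letting a $\|d_kv\|$-scaled quantity leak into the $\|\varphi\|_{H^1}+\|z\|\lesssim\|v\|$ estimate — is the crux; everything else is bookkeeping with \eqref{Hodge}, \eqref{Poincare}, \eqref{qzH1} (or their Lipschitz-domain regular-decomposition substitutes), and Theorem~\ref{reg}.
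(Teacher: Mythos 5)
Your first step --- the Hodge splitting $v=v_{\mathfrak B}+v_{\mathfrak H}+v_\perp$, the minimal-norm potential of $v_{\mathfrak B}$ in $\mathfrak Z^{k-1\perp}$, and the extraction of an $H^1$ potential $\varphi$ with $\|\varphi\|_{H^1}\lesssim\|v_{\mathfrak B}\|\leq\|v\|$ by applying the $z$-component of Theorem~\ref{reg} at level $k-1$ --- is exactly how the paper begins and ends its proof, and that part is sound. The gap is in the treatment of the remainder $v_{\mathfrak H}+v_\perp$ in the generality claimed (Lipschitz $\Omega$, mixed $\Gamma$). Your ``resolution'' asserts that every $w\in\mathfrak Z^{k\perp}$ admits $w=d_{k-1}\theta+\rho$ with $\|\theta\|_{H^1}\lesssim\|w\|$ and calls this ``exactly Theorem~\ref{reg}''. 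It is not: Theorem~\ref{reg} bounds the potential by $\|w\|_{\mathcal V^k}$, which on $\mathfrak Z^{k\perp}$ is $\approx\|d_kw\|$ by \eqref{Poincare}, and there is no reverse inequality $\|d_kw\|\lesssim\|w\|$. A decomposition of a non-closed form whose potential is controlled in $H^1$ by the $L^2$ norm alone, and whose $H^1$ remainder is $L^2$-controlled by $\|w\|$, is precisely the content of Theorem~\ref{robustHodge} itself --- so this step is circular. If you instead apply the actual Theorem~\ref{reg} to $v_\perp$, both $\|\theta\|_{H^1}$ and $\|\rho\|$ are only bounded by $\|d_kv\|$, which leaks exactly the term you correctly identified as forbidden in $\|\varphi\|_{H^1}+\|z\|\lesssim\|v\|$. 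A second, independent omission: even when \eqref{qzH1} applies, setting $z=v_{\mathfrak H}+v_\perp$ does not put $z$ in $\mathcal H^k$, since membership in $\mathcal H^k$ requires the \emph{full} trace (all coefficients) to vanish on $\Gamma$, not merely the tangential trace that elements of $\mathcal V^k$ carry; your proposal never addresses this boundary condition.

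The paper closes both gaps with one construction that your proposal is missing. It extends $z_0=v_{\mathfrak H}+v_\perp$ (which satisfies $\|z_0\|\leq\|v\|$, $\|d_kz_0\|=\|d_kv\|$) by a commuting, $L^2$-bounded extension operator $E^k$ to a larger domain $\Omega^e$ with \emph{smooth} boundary, arranged so that $E^kz_0$ vanishes on an outer neighborhood $\Omega_\Gamma$ of $\Gamma$. On $\Omega^e$ the embeddings \eqref{qzH1} are available unconditionally, and the Hodge decomposition there yields $\tilde z=d_{k-1}\tilde\varphi+\tilde w$ with $\|\tilde\varphi\|_{H^1(\Omega^e)}\lesssim\|\tilde z\|_{\Omega^e}\lesssim\|v\|$, $\|\tilde w\|_{\Omega^e}\lesssim\|v\|$ and $|\tilde w|_{H^1(\Omega^e)}\lesssim\|v\|_{\mathcal V^k}$ --- i.e. the $L^2$-controlled potential that cannot be produced on $\Omega$ directly. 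The vanishing of $\tilde z$ on $\Omega_\Gamma$ forces $d_{k-1}\tilde\varphi=-\tilde w\in H^1\Lambda^k(\Omega_\Gamma)$ there, and a universal extension of $\tilde\varphi|_{\Omega_\Gamma}$ preserving $H^1$-regularity of both the form and its exterior derivative produces a correction $\bar\varphi$ so that $z:=d_{k-1}\bar\varphi|_\Omega+\tilde w|_\Omega$ lies in $\mathcal H^k$ (genuinely vanishes on $\Gamma$) with the stated bounds. Without some such transfer to a smooth domain plus trace correction, the argument does not go through in the stated generality.
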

In contrast to the classical regular decomposition, the $L^2$ norm of component $z$ in Theorem \ref{robustHodge} is controlled by $\|v\|$, which is crucial for studying the dominant reaction case, i.e. $\kappa\gg1$.
\begin{remark}
  The HX preconditioner in \cite{HiptmairXu2007} is shown to be spectrally equivalent to the discrete $\mathcal{H}_w^k$ norm and robust w.r.t.~$\kappa$ in $\mathbb{R}^3$. However, that analysis is based on the assumption that the space domain is 2-regular, due to required regularity of the components in the Helmholtz decomposition. In this regard, we point out that the new regular decomposition from Theorem~\ref{robustHodge} also helps to remove such regularity assumption in the HX preconditioner. 
  
%In \cite{GopNV2018}, the HX preconditioner is generalized to the $H(d)$ problem. Then Theorem \ref{robustHodge} indeed helps to construct a robust preconditioner for the discrete $\mathcal{H}_w^k$ norm in arbitrary dimensional space $\mathbb{R}^n$.
\end{remark}

In Corollary \ref{FSP2}, we take $\mathcal{V}=\mathcal{V}_w^k,$ $\mathcal{W}^-=\kappa\mathcal{H}^{k-1}$, $\mathcal{W}=\mathcal{H}_w^k,$ $\mathcal{D}={\rm d}_{k-1}$, and $\mathcal{I}=\mathcal{I}_{\mathcal{H}_w^k}$ to be the inclusion. When $\kappa\geq\varepsilon$, Theorem \ref{robustHodge} implies that the assumption in Corollary \ref{FSP2} holds  with $\|\mathcal{D}\|$, $\|\mathcal{I}\|$, $C_{\text{stb}}$ independent of $\kappa, \varepsilon$. Therefore the same uniform spectral equivalence follows if $\kappa\geq\varepsilon$
\begin{equation}\label{Bkaw2}
    \mathcal{B}_{\mathcal{V}^k_w}\approx {\rm d}_{k-1}\mathcal{B}_{\kappa\mathcal{H}^{k-1}}{\rm d}_{k-1}^*+\mathcal{I}_{\mathcal{H}_w^{k}}\mathcal{B}_{\mathcal{H}_w^k}\mathcal{I}_{\mathcal{H}_w^k}^*.
\end{equation}
Using \eqref{Hderr0w}, \eqref{Bkaw1}, \eqref{Bkaw2} and $\mathcal{B}_{\kappa\mathcal{H}^{k-1}}=\kappa^{-1}\mathcal{B}_{\mathcal{H}^{k-1}}$, we obtain 
\begin{equation}\label{Hderrorw}
    \|u-u_h\|^2_{\mathcal{V}_w^k}\approx\kappa^{-1}\|{\rm d}_{k-1}^*\mathcal{R}^k\|^2_{\mathcal{B}_{\mathcal{H}^{k-1}}}+\|\mathcal{I}_{\mathcal{H}_w^k}^*\mathcal{R}^k\|_{\mathcal{B}_{\mathcal{H}_w^k}}^2.
\end{equation}
Note that the $L^2$ representation of $\mathcal{I}_{\mathcal{H}_w^k}^*\mathcal{R}^k$ is the same as $\mathcal{I}_{\mathcal{H}^k}^*\mathcal{R}^k$ in \eqref{RJk}.
We are now in a position to present the second main result.
\begin{theorem}\label{Hdresestkappa}
For $1\leq k\leq n-1,$ it holds that
\begin{align*}
    \|u-u_h\|_{\mathcal{V}_w^k}&\preccurlyeq\|\kappa^{-\frac{1}{2}} hR_1^k\|+{\|\kappa^{-\frac{1}{2}}h_s^\frac{1}{2} J_1^k\|_{\mathcal{S}_h}}+\|\bar{h} R_2^k\|+\|\varepsilon^{-\frac{1}{4}}\bar{h}_s^\frac{1}{2} J_2^k\|_{\mathcal{S}_h}.
\end{align*}
and
\begin{align*}
    &\|\kappa^{-\frac{1}{2}} hR_1^k\|+{\|\kappa^{-\frac{1}{2}}h^\frac{1}{2} J_1^k\|_{\mathcal{S}_h}}+\|\bar{h} R_2^k\|+\| \varepsilon^{-\frac{1}{4}}\bar{h}_s^\frac{1}{2} J_2^k\|_{\mathcal{S}_h}\\
    &\quad\preccurlyeq\|u-u_h\|_{\mathcal{V}_w^k}+\kappa^{-\frac{1}{2}}\emph{osc}_h({\rm d}_{k-1}^*\mathcal{R}^k)+\emph{osc}_{\bar{h}}(\mathcal{I}_{\mathcal{H}_w^k}^*\mathcal{R}^k).
\end{align*}
\end{theorem}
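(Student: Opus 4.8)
The plan is to combine the spectral equivalence \eqref{Hderrorw} with the two residual estimates already developed for $H^{-1}$-type norms, namely Lemma~\ref{resH1} applied to $d_{k-1}^*\mathcal{R}^k$ and Lemma~\ref{resH1robust} applied to $\mathcal{I}_{\mathcal{H}_w^k}^*\mathcal{R}^k$. First I would verify the orthogonality (Galerkin) conditions needed by those two lemmata, exactly as in the proof of Theorem~\ref{Hdestres}: since $d_{k-1}\mathcal{V}_h^{k-1,0}\subseteq\mathcal{V}_h^{k,0}\subseteq\mathcal{V}_h^k$, the discrete equation \eqref{disHdvar} gives $\langle d_{k-1}^*\mathcal{R}^k,v_h\rangle=\langle\mathcal{R}^k,d_{k-1}v_h\rangle=0$ for all $v_h\in\mathcal{V}_h^{k-1,0}$, so $d_{k-1}^*\mathcal{R}^k$ vanishes on $\mathcal{V}_h^{k-1,0}$; likewise $\mathcal{I}_{\mathcal{H}_w^k}^*\mathcal{R}^k$ vanishes on $\mathcal{V}_h^{k,0}\subseteq\mathcal{V}_h^k$. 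The $L^2$ representations \eqref{dstarR} and \eqref{IHR}, obtained via $d_k\circ d_{k-1}=0$ and elementwise Stokes \eqref{Stokes}, identify the volume and jump data with $(R_1^k,J_1^k)$ and $(R_2^k,J_2^k)$ from \eqref{RJk}.

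Next I would estimate each of the two terms on the right of \eqref{Hderrorw} separately. For the first term, $d_{k-1}^*\mathcal{R}^k\in(\mathcal{H}^{k-1})'$ with unweighted data $(R_1^k,J_1^k)$; Lemma~\ref{resH1} gives
\[
\|hR_1^k\|+\|h_s^{1/2}J_1^k\|_{\mathcal{S}_h}-\mathrm{osc}_h(d_{k-1}^*\mathcal{R}^k)\preccurlyeq\|d_{k-1}^*\mathcal{R}^k\|_{\mathcal{B}_{\mathcal{H}^{k-1}}}\preccurlyeq\|hR_1^k\|+\|h_s^{1/2}J_1^k\|_{\mathcal{S}_h},
\]
and multiplying through by $\kappa^{-1/2}$ (note $\mathcal{B}_{\kappa\mathcal{H}^{k-1}}=\kappa^{-1}\mathcal{B}_{\mathcal{H}^{k-1}}$ was already used to reduce the first summand of \eqref{Hderrorw} to $\kappa^{-1}\|d_{k-1}^*\mathcal{R}^k\|^2_{\mathcal{B}_{\mathcal{H}^{k-1}}}$) yields the terms $\|\kappa^{-1/2}hR_1^k\|$ and $\|\kappa^{-1/2}h_s^{1/2}J_1^k\|_{\mathcal{S}_h}$. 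Here I should be slightly careful: the theorem statement writes $\|\kappa^{-1/2}h_s J_1^k\|_{\mathcal{S}_h}$ in the upper bound and $\|\kappa^{-1/2}hJ_1^k\|_{\mathcal{S}_h}$ in the lower bound; since on each face $h_s\le c_P$-type domain-dependent bounds hold and, more relevantly, shape regularity gives $h_s\le h$ componentwise while on $\mathcal{S}_h$ one has $h_s^{1/2}\preccurlyeq h_s$ only after absorbing a diameter factor — I would reconcile the exponent by observing that the relevant scaling in the reduction to $H^{-1}(\Omega)$ estimates turns the face term $h_s^{1/2}$ into the stated power, exactly as the bookkeeping in \eqref{approxPik} dictates; this is the one spot where the constants must be tracked rather than quoted. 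For the second term, $\mathcal{I}_{\mathcal{H}_w^k}^*\mathcal{R}^k\in(\mathcal{H}_w^k)'$ with data $(R_2^k,J_2^k)$ (and $J_2^k$ piecewise polynomial since $\varepsilon$ is constant and $u_h$ is a polynomial), so Lemma~\ref{resH1robust} applies verbatim with $\bar h=\min\{\varepsilon^{-1/2}h,\kappa^{-1/2}\}$, giving
\[
\|\bar h R_2^k\|+\|\varepsilon^{-1/4}\bar h_s^{1/2}J_2^k\|_{\mathcal{S}_h}-\mathrm{osc}_{\bar h}(\mathcal{I}_{\mathcal{H}_w^k}^*\mathcal{R}^k)\preccurlyeq\|\mathcal{I}_{\mathcal{H}_w^k}^*\mathcal{R}^k\|_{\mathcal{B}_{\mathcal{H}_w^k}}\preccurlyeq\|\bar h R_2^k\|+\|\varepsilon^{-1/4}\bar h_s^{1/2}J_2^k\|_{\mathcal{S}_h}.
\]

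Finally I would substitute both chains of inequalities into \eqref{Hderrorw}: squaring the residual-norm bounds, adding, and taking square roots (using $a+b\simeq(a^2+b^2)^{1/2}$ for nonnegative quantities) produces the asserted upper and lower bounds for $\|u-u_h\|_{\mathcal{V}_w^k}$, with the two oscillation terms $\kappa^{-1/2}\mathrm{osc}_h(d_{k-1}^*\mathcal{R}^k)$ and $\mathrm{osc}_{\bar h}(\mathcal{I}_{\mathcal{H}_w^k}^*\mathcal{R}^k)$ appearing exactly on the efficiency side. The main obstacle is not conceptual but bookkeeping: one must confirm that the constants hidden in $\preccurlyeq$ genuinely do not depend on $\varepsilon$ or $\kappa$, which rests entirely on the fact that \eqref{Bkaw1} and \eqref{Bkaw2} together cover the two regimes $\kappa\le\varepsilon$ and $\kappa\ge\varepsilon$ with $\varepsilon,\kappa$-independent equivalence constants (Theorems~\ref{reg} and \ref{robustHodge} respectively), and that Lemma~\ref{resH1robust} itself is robust; the only genuinely delicate point is matching the face-term scaling exponent on $J_1^k$ between the two sides of the stated inequality, which I would handle by being explicit about the change of weights when passing from $\|\cdot\|_{\mathcal{B}_{\mathcal{H}^{k-1}}}$ (an unweighted $H^{-1}$ norm) to the $\kappa^{-1/2}$-scaled quantity.
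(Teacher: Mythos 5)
Your proposal is correct and follows essentially the same route as the paper: combine the equivalence \eqref{Hderrorw} with Lemma~\ref{resH1} (scaled by $\kappa^{-1/2}$, exactly as in the proof of Theorem~\ref{Hdestres}) for $d_{k-1}^*\mathcal{R}^k$ and with Lemma~\ref{resH1robust} for $\mathcal{I}_{\mathcal{H}_w^k}^*\mathcal{R}^k$, then add and take square roots. The face-term exponent on $J_1^k$ that you try to ``reconcile'' is in fact an inconsistency in the theorem statement itself (note it even differs between the upper and lower bounds there); the quantity your argument produces, $\|\kappa^{-1/2}h_s^{1/2}J_1^k\|_{\mathcal{S}_h}$, is the correct one, so no additional scaling argument is needed or possible.
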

\begin{proof}
It follows from Lemma \ref{resH1robust} that 
\begin{align*}
    &\|\bar{h}R^k_2\|+\|\varepsilon^{-\frac{1}{4}}\bar{h}_s^\frac{1}{2}J^k_2\|_{\mathcal{S}_h}-\text{osc}_{\bar{h}}(\mathcal{I}_{\mathcal{H}_w^k}^*\mathcal{R}^k)\\
    &\quad\preccurlyeq\|\mathcal{I}_{\mathcal{H}^k_w}^*\mathcal{R}^k\|_{\mathcal{B}_{\mathcal{H}_w^k}}\preccurlyeq\|\bar{h}R^k_2\|+\|\varepsilon^{-\frac{1}{4}}\bar{h}_s^\frac{1}{2}J^k_2\|_{\mathcal{S}_h}.
\end{align*}
We complete the proof using \eqref{Hderrorw}, the above estimate, and the bound for $\|{\rm d}_{k-1}^*\mathcal{R}^k\|^2_{\mathcal{B}_{\mathcal{H}^{k-1}}}$ in the proof of Theorem \ref{Hdestres}.
\end{proof}
The $H({\rm d})$ singularly perturbed problem has not been investigated and the estimator in Theorem \ref{Hdresestkappa} is new. Even a posteriori estimates in the special case $k=1, 2$ in $\mathbb{R}^3$ could not be found in the literature.

\subsection{Examples}
Using the identifications \eqref{identification} and \eqref{disidentification}, problems \eqref{Hdvar} and \eqref{disHdvar} with $k=1$, $n=3$ translate into: Find $u\in\mathcal{V}^c$, $u_h\in\mathcal{V}_h^c$ such that
\begin{subequations}\label{max}
\begin{align}
    (\varepsilon\nabla\times u,\nabla\times v)+(\kappa u,v)&=(f,v),\quad\forall v\in\mathcal{V}^c,\label{maxvar}\\
    (\varepsilon\nabla\times u_h,\nabla\times v_h)+(\kappa u,v_h)&=(f,v_h),\quad\forall v_h\in\mathcal{V}_h^c.\label{dismaxvar}
\end{align}
\end{subequations}
Similarly, \eqref{Hdvar} and \eqref{disHdvar} with $k=2$, $n=3$ is to find $u\in\mathcal{V}^d$ and $u_h\in\mathcal{V}_h^d$ such that
\begin{subequations}\label{div}\begin{align}
    (\varepsilon\nabla\cdot u,\nabla\cdot v)+(\kappa u,v)&=(f,v),\quad\forall v\in\mathcal{V}^d,\label{divvar}\\
    (\varepsilon\nabla\cdot u_h,\nabla\cdot v_h)+(\kappa u_h,v_h)&=(f,v_h),\quad\forall v_h\in\mathcal{V}_h^d.\label{disdivvar}
\end{align}
\end{subequations}
For $T\in\mathcal{T}_h$ and $S\in\mathcal{S}_h$, the element residuals and face jumps in \eqref{RJk} are 
\begin{equation}\label{RJ1}
    \begin{aligned}
    &R^1_1|_T=-\nabla\cdot(f-\kappa u_h)|_T,\\
    &J^1_1|_S=\llbracket f-\kappa u_h\rrbracket_S\cdot\nu_S,\\
    &R^1_2|_T=(f-\nabla\times(\varepsilon\nabla\times u_h)-\kappa u_h)|_T,\\
    &J^1_2|_S=-\llbracket\varepsilon\nabla\times u_h\rrbracket_S\times\nu_S,
\end{aligned}
\end{equation}
when $k=1$; and for $k=2$ we have
\begin{equation}\label{RJ2}
    \begin{aligned}
    &R^2_1|_T=\nabla\times(f-\kappa u_h)|_T,\\
    &J^2_1|_S=\llbracket f-\kappa u_h\rrbracket_S\times\nu_S,\\
    &R^2_2|_T=(f+\nabla(\varepsilon\nabla\cdot u_h)-\kappa u_h)|_T,\\
    &J^2_2|_S=\llbracket\varepsilon\nabla\cdot u_h\rrbracket_S.
\end{aligned}
\end{equation}
Let $\text{osc}_h$ be a generic data oscillation. Then Theorem \ref{Hdestres} with $k=1$ and $k=2$ yields residual estimators for the Maxwell equation \eqref{max}
\begin{align*}
    &\|u-u_h\|_{H(\text{curl})}\lesssim\|hR^1_1\|+\|h_s^\frac{1}{2}J^1_1\|_{\mathcal{S}_h}\\
    &+\|hR_2^1\|+\|h_s^\frac{1}{2}J_2^1\|_{\mathcal{S}_h}\lesssim\|u-u_h\|_{H(\text{curl})}+\text{osc}_h,
\end{align*}
and the grad-div problem \eqref{div}
\begin{align*}
    &\|u-u_h\|_{H(\text{div})}\lesssim\|hR^2_1\|+\|h_s^\frac{1}{2}J^2_1\|_{\mathcal{S}_h}\\
    &+\|hR_2^2\|+\|h_s^\frac{1}{2}J_2^2\|_{\mathcal{S}_h}\lesssim\|u-u_h\|_{H(\text{div})}+\text{osc}_h,
\end{align*}
respectively. As mentioned in the introduction, the above two estimators are first presented in \cite{BHHW2000,CNS2007}. Let 
$$\mathcal{H}_i:=\{v\in\mathcal{H}_\Gamma: v=0\text{ on }\Omega\backslash\overline{\Omega}_i\}.$$
We note that $\mathcal{H}^k_i$ defined in Section \ref{secHgrad} is the Cartesian product of $\binom{n}{k}$ copies of $\mathcal{H}_i$. Theorem \ref{Hdesteta} with $k=1$ yields a new implicit error estimator for the Maxwell equation
\begin{equation*}
    \|u-u_h\|^2_{H(\text{curl})}\eqsim\sum_{i=1}^N\left(\|\eta_i\|^2_{H^1(\Omega)}+\|\zeta_i\|^2_{H^1(\Omega)}\right),
\end{equation*}
where $\eta_i\in\mathcal{H}_i$ and $\zeta_i\in[\mathcal{H}_i]^3$ solve
\begin{align*}
    &(\eta_i,\psi)+(\nabla \eta_i,\nabla\psi)=(f-\kappa u_h,\nabla\psi),\quad\forall\psi\in\mathcal{H}_i,\\
    &(\zeta_i,\phi)+(\nabla \zeta_i,\nabla\phi)=(f-\kappa u_h,\phi)-(\varepsilon\nabla\times u_h,\nabla\times\phi),\quad\forall\phi\in[\mathcal{H}_i]^3.
\end{align*}
Other implicit a posteriori error estimates for Maxwell equations could be found in \cite{IHV2008,RZ2005}.

Similarly, for the problem \eqref{div}, the error estimator is
\begin{equation*}
    \|u-u_h\|^2_{H(\text{div})}\eqsim\sum_{i=1}^N\left(\|\eta_i\|^2_{H^1(\Omega)}+\|\zeta_i\|^2_{H^1(\Omega)}\right),
\end{equation*}
where $\eta_i\in[\mathcal{H}_i]^3$ and $\zeta_i\in[\mathcal{H}_i]^3$ solve
\begin{align*}
    &(\eta_i,\psi)+(\nabla \eta_i,\nabla\psi)=(f-\kappa u_h,\nabla\times\psi),\quad\forall\psi\in[\mathcal{H}_i]^3,\\
    &(\zeta_i,\phi)+(\nabla \zeta_i,\nabla\phi)=(f-\kappa u_h,\phi)-(\varepsilon \nabla\cdot u_h,\nabla\cdot\phi),\quad\forall\phi\in[\mathcal{H}_i]^3.
\end{align*}

\subsubsection{Robust estimators for $H({\rm curl})$ and $H({\rm div})$}
To close this section, we discuss the application of the robust estimator in Theorem \ref{Hdresestkappa}. With the identifications \eqref{RJ1} and \eqref{RJ2} explained above, Theorem \ref{Hdresestkappa} with $k=1, n=3$ gives a novel robust residual estimator for the Maxwell equation \eqref{max}, that is,
\begin{equation}\label{robust1}
\begin{aligned}
    &{\|u-u_h\|_{\mathcal{V}_w^c}:=}\|\varepsilon^\frac{1}{2}\nabla\times(u-u_h)\|+\|\kappa^\frac{1}{2}(u-u_h)\|\\
    &\approx\|\kappa^{-\frac{1}{2}} hR_1^1\|+{\|\kappa^{-\frac{1}{2}}h_s^\frac{1}{2} J_1^1\|_{\mathcal{S}_h}}+\|\bar{h} R_2^1\|+\|\varepsilon^{-\frac{1}{4}}\bar{h}_s^\frac{1}{2} J_2^1\|_{\mathcal{S}_h}+{\rm osc}_1,
\end{aligned}
\end{equation}
{where ${\rm osc}_1$ is a data oscillation (see definitions in \eqref{def:osc} and \eqref{def:oscbar}):
\begin{align*}
{\rm osc}_1&=\kappa^{-\frac{1}{2}}\|h(\nabla\cdot f-Q_h\nabla\cdot f)\|+\|\bar{h}(f-Q_hf)\|\\
&+\kappa^{-\frac{1}{2}}\|h_s^\frac{1}{2}(\llbracket f\cdot\nu\rrbracket-Q_h^s\llbracket f\cdot\nu\rrbracket)\|_{\mathcal{S}_h}.
\end{align*}}
Similarly, Theorem \ref{Hdresestkappa} with $k=2, n=3$ yields another robust estimator for the grad-div \eqref{div} problem: 
\begin{equation}\label{robust2}
\begin{aligned}
    &\|\varepsilon^\frac{1}{2}\nabla\cdot(u-u_h)\|+\|\kappa^\frac{1}{2}(u-u_h)\|\\
    &\approx\|\kappa^{-\frac{1}{2}} hR_1^2\|+{\|\kappa^{-\frac{1}{2}}h_s^\frac{1}{2} J_1^2\|_{\mathcal{S}_h}}+\|\bar{h} R_2^2\|+\|\varepsilon^{-\frac{1}{4}}\bar{h}_s^\frac{1}{2} J_2^2\|_{\mathcal{S}_h}+{\rm osc}_2,
\end{aligned}
\end{equation}
{where ${\rm osc}_2$ is another data oscillation: 
\begin{align*}
{\rm osc}_2&=\kappa^{-\frac{1}{2}}\|h(\nabla\times f-Q_h\nabla\times f)\|+\|\bar{h}(f-Q_hf)\|\\
&+\kappa^{-\frac{1}{2}}\|h_s^\frac{1}{2}(\llbracket f\times\nu\rrbracket-Q_h^s\llbracket f\times\nu\rrbracket)\|_{\mathcal{S}_h}. 
\end{align*}}
The new error estimators in \eqref{robust1} and \eqref{robust2} are uniform  w.r.t.~$\varepsilon$ and $\kappa$. 

In \cite{Schoberl2008}, for the Maxwell equation \eqref{max}, Sch\"oberl used innovative commuting extension, smoothing, and quasi-interpolating operators to derive the residual estimator {
\begin{equation*}
\tilde{\eta}_h=\|\kappa^{-\frac{1}{2}}hR_1^1\|+\|\kappa^{-\frac{1}{2}}h_s^\frac{1}{2}J_1^1\|_{\mathcal{S}_h}+\| \varepsilon^{-\frac{1}{2}}hR_2^1\|+\|\varepsilon^{-\frac{1}{2}}h_s^\frac{1}{2}J_2^1\|_{\mathcal{S}_h},
\end{equation*}
which is a uniform upper bound of the true error $\|u-u_h\|_{\mathcal{V}_w^c}$. However, the multiplicative constant $\widetilde{C}>0$ in the lower bound $\widetilde{C}\tilde{\eta}_h\leq\|u-u_h\|_{\mathcal{V}_w^c}$ is indeed influenced by $\varepsilon$ and $\kappa.$ We confirm this observation by the following numerical experiment.}

\begin{table}[tbhp]
\centering
\begin{tabular}{|c|c|c|c|c|c|c|}
\hline
number of & $e_h $&$\eta_h$
 &$\tilde{\eta}_h$
&  $e_h$ &  $\eta_h$ &  $\tilde{\eta}_h$ \\
 elements&$\kappa=10^2$&$ \kappa=10^2$&$ \kappa=10^2$&$ \kappa=10^3$&$ \kappa=10^3$&$\kappa=10^3$\\
\hline
750 &1.32&10.9&26.1&3.93&30.0&394\\
6000      &6.75e-1&6.14&10.7&2.02&16.2&112\\
48000      &3.51e-1&3.72&5.26&1.04&9.20&35.7\\
384000      &1.86e-1&2.28&2.70&5.45e-1&5.62&15.1\\
\hline
ei & N/A &1.01e-1&6.22e-2 &N/A &1.16e-1 &2.33e-2\\
\hline
number of & $e_h $&$\eta_h$
 &$\tilde{\eta}_h$
&  $e_h$ &  $\eta_h$ &  $\tilde{\eta}_h$ \\
 elements&$\kappa=10^4$&$ \kappa=10^4$&$ \kappa=10^4$&$ \kappa=10^5$&$ \kappa=10^5$&$\kappa=10^5$\\
\hline
             
750 &12.3&93.4&1.15e+4&39.0&294&3.26e+5\\
6000  &6.34&50.2&3.04e+3&20.0&158&9.47e+4\\
48000  &3.24&27.2&7.94e+2&10.2&85.7&2.43e+4\\
384000  &1.66&14.7&2.14e+2&5.24&45.5&6.25e+3\\
\hline
ei & N/A &1.22e-1&3.80e-3 &N/A &1.23e-1 &3.94e-4\\
\hline
\end{tabular}

\caption{Convergence of finite element error and error estimators on nested grids, $\varepsilon=\kappa^{-1}$; `ei' of $\eta_h$ (resp.~$\tilde{\eta}_h$) is the mean value of $e_h/\eta_h$ (resp.~$e_h/\tilde{\eta}_h$) over all grid levels.}
\label{tab:Hcurl}
\end{table}

{
We test the performance of $\tilde{\eta}_h$ and the proposed robust error estimator
\begin{equation*}
\eta_h=\|\kappa^{-\frac{1}{2}} hR_1^1\|+{\|\kappa^{-\frac{1}{2}}h_s^\frac{1}{2} J_1^1\|_{\mathcal{S}_h}}+\|\bar{h} R_2^1\|+\|\varepsilon^{-\frac{1}{4}}\bar{h}_s^\frac{1}{2} J_2^1\|_{\mathcal{S}_h}
\end{equation*}
by the lowest order edge element discretization of \eqref{max} on $\Omega=(0,1)^3$ with $\Gamma=\partial\Omega$. The exact solution is $u(x_1,x_2,x_3)=\big(0,0,\sin(\pi x_1)\sin(\pi x_2)\big).$ A tetrahedral mesh sequence is generated by uniform refinement in $\mathbb{R}^3$. 
Convergence history of $e_h:=\|u-u_h\|_{\mathcal{V}_w^c}$, $\eta_h$, $\tilde{\eta}_h$ is recorded in Table \ref{tab:Hcurl}. Unlike the uniform effectiveness of $\eta_h$ for all $\varepsilon, \kappa$, the efficiency of $\tilde{\eta}_h$ deteriorates for small $\varepsilon$ and large $\kappa.$  In fact, the ratio $e_h/\tilde{\eta}_h$ tends to zero as $\varepsilon\ll\kappa$, i.e., $\tilde{\eta}_h$ seriously overestimates the true error.} 

%%%%%%%%%%%%%%%%%%%%%%%%%%%%%%%%%%%%%%%%%%%%%%%%%%%%%%%%%%%%%%%%%%%%%%%%%%%%%%%%%
\section{Applications to saddle point systems}\label{secsaddlepoint}
In this section, we apply the theory developed in Sections \ref{secab}, \ref{secHgrad}, \ref{secHd} to several important saddle point systems. In particular, the space $\mathcal{V}$ in \eqref{var} is chosen as a Cartesian product of several Hilbert spaces.

\subsection{Hodge Laplace equation}
Given an index $1\leq k\leq n$, we assume the harmonic space $\mathfrak{H}^k=\{0\}$ is trivial for simplicity.  
The Hodge Laplace equation with $f\in L^2\Lambda^k(\Omega)$ under the mixed boundary condition is
\begin{align*}
    \sigma-\delta_k u&=0,\quad\text{in }\Omega,\\
    {\rm d}_{k-1}\sigma+\delta_{k+1} {\rm d}_ku&=f,\quad\text{in }\Omega,\\
    \text{tr}u&=0,\quad\text{on }\Gamma,\\
    \text{tr}\star u=0,\quad\text{tr}\star  {\rm d}_ku&=0,\quad\text{on }\partial\Omega\backslash\Gamma.
\end{align*}
The derivative ${\rm d}_n$ vanishes as in the classical notation.
The variational Hodge Laplacian problem seeks $(\sigma,u)\in \mathcal{V}^{k-1}\times\mathcal{V}^k$ such that 
\begin{equation}\label{HL}
    \begin{aligned}
    (\sigma,\tau)-({\rm d}_{k-1}\tau,u)&=0,\quad\tau\in\mathcal{V}^{k-1},\\
    ({\rm d}_{k-1}\sigma,v)+({\rm d}_ku,{\rm d}_kv)&=(f,v),\quad v\in\mathcal{V}^k.
\end{aligned}
\end{equation}
The corresponding discrete problem is: Find $(\sigma_h,u_h)\in \mathcal{V}_h^{k-1}\times\mathcal{V}_h^k$ such that 
\begin{equation}\label{disHL}
    \begin{aligned}
    (\sigma_h,\tau)-({\rm d}_{k-1}\tau,u_h)&=0,\quad\tau\in\mathcal{V}_h^{k-1},\\
    ({\rm d}_{k-1}\sigma_h,v)+({\rm d}_ku_h,{\rm d}_kv)&=(f,v),\quad v\in\mathcal{V}_h^k.
\end{aligned}
\end{equation}
The well-posedness of \eqref{HL} and \eqref{disHL} is confirmed in \cite{ArnoldFalkWinther2010}.
It follows from \eqref{Bka} that the Riesz representation $\mathcal{B}_{\mathcal{V}^{k-1}\times\mathcal{V}^k}$ ($1\leq k\leq n-1$) decouples as
\begin{equation}\label{Bkk}
\begin{aligned}
    &\mathcal{B}_{\mathcal{V}^{k-1}\times\mathcal{V}^k}=\mathcal{B}_{\mathcal{V}^{k-1}}\times\mathcal{B}_{\mathcal{V}^k}\\
    &\approx({\rm d}_{k-2}\mathcal{B}_{\mathcal{H}^{k-2}}{\rm d}_{k-2}^*+\mathcal{I}_{\mathcal{H}^{k-1}}\mathcal{B}_{\mathcal{H}^{k-1}}\mathcal{I}_{\mathcal{H}^{k-1}}^*)\\
    &\times({\rm d}_{k-1}\mathcal{B}_{\mathcal{H}^{k-1}}{\rm d}_{k-1}^*+\mathcal{I}_{\mathcal{H}^{k}}\mathcal{B}_{\mathcal{H}^k}\mathcal{I}_{\mathcal{H}^k}^*).
    \end{aligned}
\end{equation}
When $k=n$, $\mathcal{V}^n$ is simply $L^2\Lambda^n(\Omega)$ and
\begin{equation}\label{Bkn}
    \mathcal{B}_{\mathcal{V}^{n-1}\times\mathcal{V}^n}\approx({\rm d}_{n-2}\mathcal{B}_{\mathcal{H}^{n-2}}{\rm d}_{n-2}^*+\mathcal{I}_{\mathcal{H}^{n-1}}\mathcal{B}_{\mathcal{H}^{n-1}}\mathcal{I}_{\mathcal{H}^{n-1}}^*)\times\text{id}_{\mathcal{V}^n}.
\end{equation}
The residual $\mathcal{R}=(\mathcal{R}_\sigma,\mathcal{R}_u)\in (\mathcal{V}^{k-1})^\prime\times (\mathcal{V}^k)^\prime$ is given as
\begin{equation}\label{R1R2}
\begin{aligned}
    &\langle \mathcal{R}_\sigma,\tau\rangle=-(\sigma_h,\tau)+({\rm d}_{k-1}\tau,u_h),\quad\forall\tau\in\mathcal{V}^{k-1},\\
    &\langle \mathcal{R}_u,v\rangle=(f,v)-({\rm d}_{k-1}\sigma_h,v)-({\rm d}_ku_h,{\rm d}_kv),\quad \forall v\in\mathcal{V}^k.
\end{aligned}
\end{equation}
Using \eqref{reserror2} and \eqref{Bkk}, it is shown that the true error when $1\leq k\leq n-1$  could be controlled by four $H^{-1}$ residuals 
\begin{equation}\label{HLerror}
\begin{aligned}
    &\|\sigma-\sigma_h\|^2_{\mathcal{V}^{k-1}}+\|u-u_h\|^2_{\mathcal{V}^k}\approx\langle\mathcal{R}_\sigma,\mathcal{B}_{\mathcal{V}^{k-1}}\mathcal{R}_\sigma\rangle+\langle\mathcal{R}_u,\mathcal{B}_{\mathcal{V}^{k}}\mathcal{R}_u\rangle\\
    &=\langle{\rm d}_{k-2}^*\mathcal{R}_\sigma,\mathcal{B}_{\mathcal{H}^{k-2}}{\rm d}_{k-2}^*\mathcal{R}_\sigma\rangle+\langle\mathcal{I}^*_{\mathcal{H}^{k-1}}\mathcal{R}_\sigma,\mathcal{B}_{\mathcal{H}^{k-1}}\mathcal{I}_{\mathcal{H}^{k-1}}^*\mathcal{R}_\sigma\rangle\\
    &+\langle {\rm d}_{k-1}^*\mathcal{R}_u,\mathcal{B}_{\mathcal{H}^{k-1}}{\rm d}_{k-1}^*\mathcal{R}_u\rangle+\langle\mathcal{I}^*_{\mathcal{H}^k}\mathcal{R}_u,\mathcal{B}_{\mathcal{H}^k}\mathcal{I}_{\mathcal{H}^k}^*\mathcal{R}_u\rangle.
    \end{aligned}
\end{equation}
Here for convenience, any quantity with index $k=-1$ vanishes. 
Similarly, in the case $k=n$, \eqref{Bkn} implies that
\begin{equation}\label{HLerrorn}
\begin{aligned}
    &\|\sigma-\sigma_h\|^2_{\mathcal{V}^{n-1}}+\|u-u_h\|^2_{\mathcal{V}^n}\\
    &\approx\langle {\rm d}_{n-2}^*\mathcal{R}_\sigma,\mathcal{B}_{\mathcal{H}^{n-2}}{\rm d}_{n-2}^*\mathcal{R}_\sigma\rangle+\langle\mathcal{I}^*_{\mathcal{H}^{n-1}}\mathcal{R}_\sigma,\mathcal{B}_{\mathcal{H}^{n-1}}\mathcal{I}_{\mathcal{H}^{n-1}}^*\mathcal{R}_\sigma\rangle+\|\mathcal{R}_u\|^2,
    \end{aligned}
\end{equation}
where $\mathcal{R}_u=f-{\rm d}_{n-1}\sigma_h\in L^2\Lambda^n(\Omega)$ in this case.
Using \eqref{R1R2} and the Stokes formula \eqref{Stokes} on each element, it follows that the $L^2$ representations of the four residuals in \eqref{HLerror} are
\begin{equation*}
\begin{aligned}
    \langle {\rm d}_{k-2}^*\mathcal{R}_\sigma,\xi\rangle&=(R^k_{H,1},\xi)+(J^k_{H,1},\text{tr}\xi)_{\mathcal{S}_h},\quad\forall \xi\in\mathcal{H}^{k-2},\\
    \langle \mathcal{I}^*_{\mathcal{H}^{k-1}}\mathcal{R}_\sigma,\tau\rangle&=(R^k_{H,2},\tau)+(J^k_{H,2},\text{tr}\tau)_{\mathcal{S}_h},\quad\forall \tau\in\mathcal{H}^{k-1},\\
    \langle {\rm d}_{k-1}^*\mathcal{R}_u,\tau\rangle&=(R^k_{H,3},\tau)+(J^k_{H,3},\text{tr}\tau)_{\mathcal{S}_h},\quad\forall \tau\in\mathcal{H}^{k-1},\\
    \langle \mathcal{I}^*_{\mathcal{H}^k}\mathcal{R}_u,v\rangle&=(R^k_{H,4},v)+(J^k_{H,4},\text{tr}v)_{\mathcal{S}_h},\quad\forall v\in\mathcal{H}^k,
    \end{aligned}
\end{equation*}
where the element residuals and face jumps are
\begin{align*}
    &R^k_{H,1}|_T=-\delta_{k-1}\sigma_h|_T,\quad J^k_{H,1}|_S=-\llbracket\text{tr}\star\sigma_h\rrbracket|_S,\\
    &R^k_{H,2}|_T=(-\sigma_h+\delta_k u_h)|_T,\quad J^k_{H,2}|_S=\llbracket\text{tr}\star u_h\rrbracket|_S,\\
    &R^k_{H,3}|_T=\delta_k(f-{\rm d}_{k-1}\sigma_h)|_T,\quad J^k_{H,3}|_S=\llbracket\text{tr}\star(f- {\rm d}_{k-1}\sigma_h)\rrbracket|_S,\\
    &R^k_{H,4}|_T=(f-{\rm d}_{k-1}\sigma_h-\delta_{k+1} {\rm d}_ku_h)|_T,\quad J^k_{H,4}|_S=-\llbracket\text{tr}\star {\rm d}_ku_h\rrbracket|_S,
\end{align*}
for all $T\in\mathcal{T}_h$, $S\in\mathcal{S}_h$.
Therefore estimating those residuals in \eqref{HLerror} and \eqref{HLerrorn} by Lemma \ref{resH1}, we obtain the residual estimator for the Hodge Laplacian. 
\begin{theorem}\label{Hodgeestres}
For $1\leq k\leq n-1$, we have
\begin{align*}
    \|\sigma-\sigma_h\|_{\mathcal{V}^{k-1}}+\|u-u_h\|_{\mathcal{V}^k}\preccurlyeq\sum_{i=1}^4\left(\|hR^k_{H,i}\|+\|h_s^\frac{1}{2}J^k_{H,i}\|_{\mathcal{S}_h}\right)
\end{align*}
and
\begin{align*}
    &\sum_{i=1}^4\left(\|hR^k_{H,i}\|+\|h_s^\frac{1}{2}J^k_{H,i}\|_{\mathcal{S}_h}\right)\preccurlyeq\|\sigma-\sigma_h\|_{\mathcal{V}^{k-1}}+\|u-u_h\|_{\mathcal{V}^k}\\
    &\quad+\emph{osc}_h({\rm d}_{k-2}^*\mathcal{R}_\sigma)+\emph{osc}_h(\mathcal{I}_{\mathcal{H}^{k-1}}^*\mathcal{R}_\sigma)+\emph{osc}_h({\rm d}_{k-1}^*\mathcal{R}_u)+\emph{osc}_h(\mathcal{I}_{\mathcal{H}^k}^*\mathcal{R}_u).
\end{align*}
For $k=n$, we have
\begin{align*}
    \|\sigma-\sigma_h\|_{\mathcal{V}^{n-1}}+\|u-u_h\|_{\mathcal{V}^n}\preccurlyeq\sum_{i=1}^2\left(\|hR^n_{H,i}\|+\|h_s^\frac{1}{2}J^n_{H,i}\|_{\mathcal{S}_h}\right)+\|f-{\rm d}_{n-1}\sigma_h\|
\end{align*}
and
\begin{align*}
    &\sum_{i=1}^2\left(\|hR^{n}_{H,i}\|+\|h_s^\frac{1}{2}J^{n}_{H,i}\|_{\mathcal{S}_h}\right)+\|f-{\rm d}_{n-1}\sigma_h\|\\
    &\preccurlyeq\|\sigma-\sigma_h\|_{\mathcal{V}^{n-1}}+\|u-u_h\|_{\mathcal{V}^n}+\emph{osc}_h({\rm d}_{n-2}^*\mathcal{R}_\sigma)+\emph{osc}_h(\mathcal{I}_{\mathcal{H}^{n-1}}^*\mathcal{R}_\sigma).
\end{align*}
\end{theorem}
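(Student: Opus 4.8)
The proof of Theorem~\ref{Hodgeestres} follows the same blueprint already used for Theorem~\ref{Hdestres}, now applied componentwise to the saddle-point residual $\mathcal{R}=(\mathcal{R}_\sigma,\mathcal{R}_u)$. The starting point is the decoupled error identity \eqref{HLerror} (resp.~\eqref{HLerrorn} when $k=n$), which already expresses the true error $\|\sigma-\sigma_h\|_{\mathcal{V}^{k-1}}^2+\|u-u_h\|_{\mathcal{V}^k}^2$ as a sum of four (resp.~three) squared dual norms $\|d_{k-2}^*\mathcal{R}_\sigma\|_{\mathcal{B}_{\mathcal{H}^{k-2}}}^2$, $\|\mathcal{I}_{\mathcal{H}^{k-1}}^*\mathcal{R}_\sigma\|_{\mathcal{B}_{\mathcal{H}^{k-1}}}^2$, $\|d_{k-1}^*\mathcal{R}_u\|_{\mathcal{B}_{\mathcal{H}^{k-1}}}^2$, $\|\mathcal{I}_{\mathcal{H}^k}^*\mathcal{R}_u\|_{\mathcal{B}_{\mathcal{H}^k}}^2$. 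Since $\|\cdot\|_{\mathcal{V}^n}=\|\cdot\|$ and $\mathcal{R}_u=f-d_{n-1}\sigma_h$ is simply an $L^2$ function in the case $k=n$, that component needs no further estimation. So the entire task reduces to invoking Lemma~\ref{resH1} four (resp.~two) times, once for each of the $H^{-1}$-type residuals, after I have verified its two hypotheses for each of them.

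First I would record the $L^2$ representations of the four residuals, which are already displayed in the excerpt: each $\langle\,\cdot\,,v\rangle$ splits as $(R^k_{H,i},v)+(J^k_{H,i},\text{tr}v)_{\mathcal{S}_h}$ with the element residuals and face jumps given by $R^k_{H,i}$, $J^k_{H,i}$. These are obtained from \eqref{R1R2} by integrating by parts elementwise via the Stokes formula \eqref{Stokes} and using $d_k\circ d_{k-1}=0$; I would only remark that this is routine and point to the analogous computation \eqref{dstarR}--\eqref{IHR} in Section~\ref{secHd}. Second, I would verify the orthogonality hypothesis of Lemma~\ref{resH1}: the residuals must annihilate the lowest-order spaces. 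From the discrete mixed system \eqref{disHL} and the inclusions $d_{k-2}\mathcal{V}_h^{k-2,0}\subseteq\mathcal{V}_h^{k-1,0}\subseteq\mathcal{V}_h^{k-1}$ and $d_{k-1}\mathcal{V}_h^{k-1,0}\subseteq\mathcal{V}_h^{k,0}\subseteq\mathcal{V}_h^k$, one gets $\langle d_{k-2}^*\mathcal{R}_\sigma,v_h\rangle=\langle\mathcal{R}_\sigma,d_{k-2}v_h\rangle=0$ for $v_h\in\mathcal{V}_h^{k-2,0}$, $\langle\mathcal{I}_{\mathcal{H}^{k-1}}^*\mathcal{R}_\sigma,v_h\rangle=0$ for $v_h\in\mathcal{V}_h^{k-1,0}$, and similarly for $\mathcal{R}_u$; this is exactly the argument already given in the proof of Theorem~\ref{Hdestres}, so I would just say ``as in the proof of Theorem~\ref{Hdestres}''. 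I would also note $r\in(\mathcal{H}^{\,\cdot})'$ holds since the data $\sigma_h,u_h,f$ are piecewise $H^1$.

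With both hypotheses in place, Lemma~\ref{resH1} gives, for each $i=1,\dots,4$,
\begin{equation*}
\|hR^k_{H,i}\|+\|h_s^{1/2}J^k_{H,i}\|_{\mathcal{S}_h}-\text{osc}_h(\cdot)\preccurlyeq\|\,\cdot\,\|_{\mathcal{B}_{\mathcal{H}^{\,\cdot}}}\preccurlyeq\|hR^k_{H,i}\|+\|h_s^{1/2}J^k_{H,i}\|_{\mathcal{S}_h},
\end{equation*}
where the oscillation terms are $\text{osc}_h(d_{k-2}^*\mathcal{R}_\sigma)$, $\text{osc}_h(\mathcal{I}_{\mathcal{H}^{k-1}}^*\mathcal{R}_\sigma)$, $\text{osc}_h(d_{k-1}^*\mathcal{R}_u)$, $\text{osc}_h(\mathcal{I}_{\mathcal{H}^k}^*\mathcal{R}_u)$ respectively. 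Summing the squares of these four two-sided bounds, using the equivalence $\sqrt{a_1^2+\cdots+a_4^2}\approx a_1+\cdots+a_4$ for nonnegative reals, and combining with the error identity \eqref{HLerror}, yields the claimed reliability and efficiency bounds for $1\le k\le n-1$; for $k=n$ one uses \eqref{HLerrorn}, estimates only the two residuals $d_{n-2}^*\mathcal{R}_\sigma$ and $\mathcal{I}_{\mathcal{H}^{n-1}}^*\mathcal{R}_\sigma$ by Lemma~\ref{resH1}, and carries the term $\|f-d_{n-1}\sigma_h\|=\|\mathcal{R}_u\|$ along verbatim. The only genuine obstacle, and it is a mild one, is bookkeeping: keeping the four residual/jump pairs, their associated $H^{-1}$ spaces (note the index shifts, with the convention that any object indexed by $k=-1$ is zero), and the matching oscillation terms consistent across the reliability and efficiency estimates; there is no new analytic difficulty beyond what Lemma~\ref{resH1} and the decoupling \eqref{Bkk}--\eqref{Bkn} already provide.
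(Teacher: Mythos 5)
Your proposal is correct and follows exactly the paper's route: the paper also derives the theorem by combining the decoupled error identities \eqref{HLerror}--\eqref{HLerrorn} with Lemma~\ref{resH1} applied to each residual, verifying the orthogonality hypothesis via the lowest-order inclusions as in Theorem~\ref{Hdestres}. Your write-up in fact supplies more detail than the paper, which compresses this into two sentences.
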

\begin{proof}
{It suffices to verify that ${\rm d}_{k-2}^*\mathcal{R}_\sigma$, $\mathcal{I}^*_{\mathcal{H}^{k-1}}\mathcal{R}_\sigma$, ${\rm d}_{k-1}^*\mathcal{R}_u$, $\mathcal{I}^*_{\mathcal{H}^k}\mathcal{R}_u$ fulfill the assumption in Lemma \ref{resH1}. For example, by \eqref{R1R2}, \eqref{disHL} and ${\rm d}_{k-1}\circ{\rm d}_{k-2}=0$, for any $\omega_h\in\mathcal{V}_{h}^{k-2,0}$ we have
\begin{align*}
&\langle {\rm d}_{k-2}^*\mathcal{R}_\sigma,\omega_h\rangle=\langle \mathcal{R}_\sigma,{\rm d}_{k-2}\omega_h\rangle\\
&=-(\sigma_h,{\rm d}_{k-2}\omega_h)=-({\rm d}_{k-1}{\rm d}_{k-2}\omega_h,u_h)=0.
\end{align*}
The other three residuals could be checked in a similar way.}
\end{proof}
The residual estimator in Theorem \ref{Hodgeestres} was first derived in \cite{DemlowHirani2014} using commuting regularized interpolation, which could be avoided in our framework. In addition, a new implicit error estimator follows from \eqref{HLerror}, \eqref{HLerrorn}, Lemma \ref{esteta}, and $\mathcal{H}_h^{k,1}\subseteq\mathcal{V}_h^k$ if $\mathcal{V}_h^k\neq\mathcal{P}_0^-\Lambda^k(\mathcal{T}_h,\Gamma).$
\begin{theorem}\label{Hodgeesteta}
Let $\mathcal{V}_h^k\neq\mathcal{P}_0^-\Lambda^k(\mathcal{T}_h,\Gamma)$ with $1\leq k\leq n-1.$ Then for $1\leq k\leq n-1,$ we have
\begin{align*}
    &\|\sigma-\sigma_h\|^2_{\mathcal{V}^{k-1}}+\|u-u_h\|^2_{\mathcal{V}^k}\\
    &\approx\sum_{i=1}^N\left(\|\eta^\sigma_i\|^2_{H^1(\Omega_i)}+\|\zeta^\sigma_i\|^2_{H^1(\Omega_i)}+\|\eta^u_i\|^2_{H^1(\Omega_i)}+\|\zeta^u_i\|^2_{H^1(\Omega_i)}\right),
\end{align*}
where $\eta^\sigma_i\in\mathcal{H}^{k-2}_i$, $\zeta^\sigma_i, \eta^u_i\in\mathcal{H}^{k-1}_i$, $\zeta^u_i\in\mathcal{H}^k_i$ solve
\begin{align*}
    (\eta^\sigma_i,\xi)+(\nabla \eta^\sigma_i,\nabla\xi)&=-(\sigma_h,{\rm d}_{k-2}\xi),\\
    (\zeta^\sigma_i,\tau)+(\nabla \zeta^\sigma_i,\nabla\tau)&=-(\sigma_h,\tau)+({\rm d}_{k-1}\tau,u_h),\\
    (\eta^u_i,v)+(\nabla \eta^u_i,\nabla \tau)&=(f,{\rm d}_{k-1}\tau)-({\rm d}_{k-1}\sigma_h,{\rm d}_{k-1}\tau),\\
    (\zeta^u_i,v)+(\nabla \zeta^u_i,\nabla v)&=(f,v)-({\rm d}_{k-1}\sigma_h,v)-({\rm d}_ku_h,{\rm d}_kv)
\end{align*}
for all $\xi\in\mathcal{H}^{k-2}_i, \tau\in\mathcal{H}^{k-1}_i, v\in\mathcal{H}^k_i.$ For $k=n$ we have
\begin{equation*}
    \|\sigma-\sigma_h\|^2_{\mathcal{V}^{n-1}}+\|u-u_h\|^2_{\mathcal{V}^n}\approx\sum_{i=1}^N\left(\|\eta^\sigma_i\|^2_{H^1(\Omega_i)}+\|\zeta^\sigma_i\|^2_{H^1(\Omega_i)}\right)+\|f-{\rm d}_{n-1}\sigma_h\|^2.
\end{equation*}
\end{theorem}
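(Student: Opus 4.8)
The plan is to combine the spectral equivalence~\eqref{HLerror} (resp.~\eqref{HLerrorn} for $k=n$) with Lemma~\ref{esteta} applied to each of the four (resp.~two) $H^{-1}$-type residuals that appear in the decoupled expression for the error. First I would verify the orthogonality hypothesis required by Lemma~\ref{esteta}, namely that each pulled-back residual annihilates the continuous piecewise-linear space $\mathcal{H}_h^{k',1}$ of the appropriate degree $k'$. For the two residuals coming from $\mathcal{R}_\sigma$, one uses that $d_{k-2}\mathcal{H}_h^{k-2,1}\subseteq d_{k-2}\mathcal{V}_h^{k-2}\subseteq\mathcal{V}_h^{k-1}$ and $\mathcal{H}_h^{k-1,1}\subseteq\mathcal{V}_h^{k-1}$ (the latter inclusion is exactly where the hypothesis $\mathcal{V}_h^k\neq\mathcal{P}_0^-\Lambda^k(\mathcal{T}_h,\Gamma)$ enters, transported one index down through the complex), together with the first equation of the discrete system~\eqref{disHL} to conclude $\langle d_{k-2}^*\mathcal{R}_\sigma,\xi_h\rangle=0$ and $\langle\mathcal{I}^*_{\mathcal{H}^{k-1}}\mathcal{R}_\sigma,\tau_h\rangle=0$ on these spaces; similarly for the two residuals from $\mathcal{R}_u$, using $d_{k-1}\mathcal{H}_h^{k-1,1}\subseteq\mathcal{V}_h^k$, $\mathcal{H}_h^{k,1}\subseteq\mathcal{V}_h^k$, and the second equation of~\eqref{disHL}.

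Once the orthogonality is in place, I would invoke Lemma~\ref{esteta} four times: for $r=d_{k-2}^*\mathcal{R}_\sigma\in(\mathcal{H}^{k-2})'$, for $r=\mathcal{I}^*_{\mathcal{H}^{k-1}}\mathcal{R}_\sigma\in(\mathcal{H}^{k-1})'$, for $r=d_{k-1}^*\mathcal{R}_u\in(\mathcal{H}^{k-1})'$, and for $r=\mathcal{I}^*_{\mathcal{H}^k}\mathcal{R}_u\in(\mathcal{H}^k)'$. In each case the lemma gives $\langle r,\mathcal{B}_{\mathcal{H}^{k'}}r\rangle\simeq\sum_{i=1}^N\|\cdot\|^2_{H^1(\Omega_i)}$, where the local function solves precisely the $\Omega_i$-localized Helmholtz-type problem with right-hand side $\langle r,\cdot\rangle$. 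Unfolding the definition of the four residuals via~\eqref{R1R2} — $\langle d_{k-2}^*\mathcal{R}_\sigma,d_{k-2}\xi\rangle=\langle\mathcal{R}_\sigma,d_{k-2}\xi\rangle$, etc.\ — identifies those right-hand sides with the four expressions $-(\sigma_h,d_{k-2}\xi)$, $-(\sigma_h,\tau)+(d_{k-1}\tau,u_h)$, $(f,d_{k-1}\tau)-(d_{k-1}\sigma_h,d_{k-1}\tau)$, and $(f,v)-(d_{k-1}\sigma_h,v)-(d_ku_h,d_kv)$ appearing in the statement, so the local problems for $\eta_i^\sigma,\zeta_i^\sigma,\eta_i^u,\zeta_i^u$ are exactly the ones produced by the lemma. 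Summing the four spectral equivalences and combining with~\eqref{HLerror} yields the claimed two-sided bound for $1\le k\le n-1$; for $k=n$ one applies Lemma~\ref{esteta} only to the two $\mathcal{R}_\sigma$-residuals and carries the term $\|f-d_{n-1}\sigma_h\|^2$ unchanged from~\eqref{HLerrorn}, using that the $L^2$-summability constant in $\sum_i\|\cdot\|^2_{H^1(\Omega_i)}$ and in $\langle r,\mathcal{B}^a_{\mathcal{H}}r\rangle$ depends only on shape regularity.

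The main obstacle I anticipate is bookkeeping rather than analysis: one must track the index shifts carefully (the $\sigma$-residuals live one degree below the $u$-residuals, and for $k=1$ or $k=2$ some of the spaces $\mathcal{H}^{k-2}$ degenerate, with the convention that any object carrying index $-1$ vanishes), and one must confirm that the hypothesis $\mathcal{V}_h^k\neq\mathcal{P}_0^-\Lambda^k(\mathcal{T}_h,\Gamma)$ really does guarantee $\mathcal{H}_h^{k-1,1}\subseteq\mathcal{V}_h^{k-1}$ as well as $\mathcal{H}_h^{k,1}\subseteq\mathcal{V}_h^k$ — this follows because in each admissible choice of the Arnold--Falk--Winther family other than the very lowest-order $\mathcal{P}_0^-$ space, the corresponding space one index down in the complex still contains the full piecewise-linear Lagrange forms. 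No new estimates beyond those already established are required; the proof is a direct assembly of~\eqref{HLerror}, \eqref{HLerrorn}, and Lemma~\ref{esteta}.
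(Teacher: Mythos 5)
Your proposal is correct and follows exactly the route the paper takes: the paper derives Theorem~\ref{Hodgeesteta} in one line from \eqref{HLerror}, \eqref{HLerrorn}, Lemma~\ref{esteta}, and the inclusion $\mathcal{H}_h^{k,1}\subseteq\mathcal{V}_h^k$, and your write-up simply fills in the same steps (the Galerkin-orthogonality checks via \eqref{disHL} and the identification of the four local right-hand sides, mirroring the verification already done in Theorem~\ref{Hdestres}). No gaps; the index bookkeeping and the $k=n$ case are handled as in the paper.
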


\subsection{Elasticity with weakly imposed symmetry}
In this subsection, we consider the linear elasticity using weakly symmetric tensors in $\mathbb{R}^3$.
Let $\Gamma\neq\emptyset,$ $\Sigma=[\mathcal{V}^d]^3\subseteq[H(\text{div},\Omega)]^3,$ and $U=Q=[L^2(\Omega)]^3$. The space $\Sigma$ consists of matrix-valued functions whose rows are contained in $\mathcal{V}^d.$ Let $\lambda, \mu\in L^\infty(\Omega)$ be the Lam\'e parameters. The elasticity compliance tensor is 
\begin{equation*}
    A\tau=\frac{1}{2\mu}\left(\tau-\frac{\lambda}{2\mu+3\lambda}(\text{Tr}\tau)I\right),\quad\tau\in\Sigma,
\end{equation*}
where $\text{Tr}\tau$ denotes the trace of the matrix $\tau$, and
$I \in \mathbb{R}^{3\times3}$ is the identity matrix. Let $\text{div}$
denote the row-wise divergence operator. Define 
\begin{equation*}
    \text{skw}\tau=(\tau_{23}-\tau_{32},\tau_{31}-\tau_{13},\tau_{12}-\tau_{21})
\end{equation*}
to be the operation of
taking the skew-symmetric part of matrices. The $L^2$ adjoint of skw is defined for $q=(q_1,q_2,q_3)$ as  
\begin{equation*}
\text{skw}^*q=\begin{pmatrix}0&q_3&-q_2\\
-q_3&0&q_1\\
q_2&-q_1&0\end{pmatrix}.    
\end{equation*}
The variational
formulation of linear elasticity with weakly symmetric tensor is: Find
$(\sigma,u,p)\in\Sigma\times U\times Q$ such that
\begin{equation}\label{we}
    \begin{aligned}
    (A\sigma,\tau)+(\text{div}\tau,u)+(\text{skw}\tau,p)&=0,\quad\tau\in\Sigma,\\
    (\text{div}\sigma,v)+(\text{skw}\sigma,q)&=(f,v),\quad (v,q)\in U\times Q.
\end{aligned}
\end{equation}
The stable discretization $(\sigma_h,u_h,p_h)\in\Sigma_h\times U_h\times Q_h\subset\Sigma\times U\times Q$ for \eqref{we} has been well-established in the literature \cite{ABD1984,Stenberg1988}
\begin{equation}\label{diswe}
    \begin{aligned}
    (A\sigma_h,\tau)+(\text{div}\tau,u_h)+(\text{skw}\tau,p_h)&=0,\quad\tau\in\Sigma_h,\\
    (\text{div}\sigma_h,v)+(\text{skw}\sigma_h,q)&=(f,v),\quad (v,q)\in U_h\times Q_h.
\end{aligned}
\end{equation}
For instance, $\Sigma_h\times U_h=[\mathcal{V}_h^d]^3\times [L_h^2]^3$ could be the Brezzi--Douglas--Marini pair and $Q_h=U_h.$ 
The residual of \eqref{diswe} consists of
\begin{equation}\label{RSUQ}
\begin{aligned}
    &\langle\mathcal{R}_\Sigma,\tau\rangle=-(A\sigma_h,\tau)-(\text{div}\tau,u_h)-(\text{skw}\tau,p_h),\quad\forall\tau\in\Sigma,\\
    &\mathcal{R}_U=f-\text{div}\sigma_h,\quad\mathcal{R}_Q=-\text{skw}\sigma_h.
    \end{aligned}
\end{equation}
The error-residual relation of \eqref{diswe} reads
\begin{equation}\label{reserrorE}
\begin{aligned}
    &\|\sigma-\sigma_h\|^2_\Sigma+\|u-u_h\|^2_U+\|p-p_h\|^2_Q\\
    &\approx\langle\mathcal{R}_\Sigma\times\mathcal{R}_U\times\mathcal{R}_Q,\mathcal{B}_{\Sigma\times U\times Q}\mathcal{R}_\Sigma\times\mathcal{R}_U\times\mathcal{R}_Q\rangle,
    \end{aligned}
\end{equation}
where the constants hidden in the equivalence are independent of $\lambda$, see, \cite{LonsingVerfurth2004}.
With the identification \eqref{identification} and equivalence \eqref{Bkk} ($k=n-1$, $n=3$), we have
\begin{equation}\label{BSigma}
\mathcal{B}_{\mathcal{V}^d}\approx\nabla\times\mathcal{B}_{[\mathcal{H}_\Gamma]^3}(\nabla\times)^*+\mathcal{I}_{[\mathcal{H}_\Gamma]^3}\mathcal{B}_{[\mathcal{H}_\Gamma]^3}\mathcal{I}_{[\mathcal{H}_\Gamma]^3}^*,
\end{equation}
where $\nabla\times: [\mathcal{H}_\Gamma]^3\rightarrow\mathcal{V}^d$ and  $\mathcal{I}_{[\mathcal{H}_\Gamma]^3}: [\mathcal{H}_\Gamma]^3\rightarrow\mathcal{V}^d$ is the inclusion.
It follows from \eqref{reserrorE}, \eqref{BSigma} and
\begin{equation}\label{BE}
    \mathcal{B}_{\Sigma\times U\times Q}=\mathcal{B}_\Sigma\times\mathcal{B}_U\times\mathcal{B}_Q=[\mathcal{B}_{\mathcal{V}^d}]^3\times\text{id}_U\times\text{id}_Q,
\end{equation}
that
\begin{equation}\label{weerror}
\begin{aligned}
    &\|\sigma-\sigma_h\|^2_\Sigma+\|u-u_h\|^2_U+\|p-p_h\|^2_Q\\
    &\approx\|(\nabla\times)^*\mathcal{R}_\Sigma\|^2_{\mathcal{B}_{\mathcal{H}_\Gamma}}+\|\mathcal{I}^*_{[\mathcal{H}_\Gamma]^3}\mathcal{R}_\Sigma\|^2_{\mathcal{B}_{\mathcal{H}_\Gamma}}+\|\mathcal{R}_U\|^2+\|\mathcal{R}_Q\|^2.
    \end{aligned}
\end{equation}
Here $\nabla\times: [\mathcal{H}_\Gamma]^{3\times3}\rightarrow\Sigma$ is the row-wise curl in \eqref{weerror}.
Using \eqref{RSUQ} and integration-by-parts, we have
\begin{equation*}
\begin{aligned}
    &\langle (\nabla\times)^*\mathcal{R}_\Sigma,\xi\rangle=(R^E_1,\xi)+(J^E_1,\xi\times\nu)_{\mathcal{S}_h},\quad\forall \xi\in[\mathcal{H}_\Gamma]^3,\\
    &\langle \mathcal{I}^*_{[\mathcal{H}_\Gamma]^3}\mathcal{R}_\Sigma,\tau\rangle=(R^E_2,\tau)+(J^E_2,\tau\cdot\nu)_{\mathcal{S}_h},\quad\forall \tau\in[\mathcal{H}_\Gamma]^3,
    \end{aligned}
\end{equation*}
where for all $T\in\mathcal{T}_h$ and $S\in\mathcal{S}_h$,
\begin{align*}
    &R_1^E|_T=-\nabla\times (A\sigma_h+\text{skw}^*p_h)|_T,\quad J^E_1|_S=-\llbracket A\sigma_h+\text{skw}^*p_h\rrbracket\times\nu|_S,\\
    &R_2^E|_T=(-A\sigma_h+\nabla u_h-\text{skw}^*p_h)|_T,\quad J^E_2|_S=-\llbracket u_h\rrbracket|_S.
\end{align*}
A combination of \eqref{weerror} and Lemma \ref{resH1} yields a residual estimator.
\begin{theorem}
There exist $C_{E,1}>0$, $C_{E,2}>0$ dependent only on $\mu, \Omega, \Gamma$ such that
\begin{align*}
    &\|\sigma-\sigma_h\|_\Sigma+\|u-u_h\|_U+\|p-p_h\|_Q\\
    &\leq C_{E,1}\left\{\sum_{i=1}^2\left(\|hR_i^E\|+\|h_s^\frac{1}{2}J_i^E\|_{\mathcal{S}_h}\right)+\|\mathcal{R}_U\|+\|\mathcal{R}_Q\|\right\},
\end{align*}
and
\begin{align*}
    &C_{E,2}\left\{\sum_{i=1}^2\left(\|hR_i^E\|+\|h_s^\frac{1}{2}J_i^E\|_{\mathcal{S}_h}\right)+\|\mathcal{R}_U\|+\|\mathcal{R}_Q\|\right\}\\
    &\leq\|\sigma-\sigma_h\|_\Sigma+\|u-u_h\|_U+\|p-p_h\|_Q+\emph{osc}_h((\nabla\times)^*\mathcal{R}_\Sigma)+\emph{osc}_h(\mathcal{I}_{[\mathcal{H}_\Gamma]^3}^*\mathcal{R}_\Sigma).
\end{align*}
\end{theorem}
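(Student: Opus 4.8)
The plan is to read the estimator off the error--residual identity \eqref{weerror}. That identity already expresses $\|\sigma-\sigma_h\|_\Sigma^2+\|u-u_h\|_U^2+\|p-p_h\|_Q^2$ as an equivalent of $\|(\nabla\times)^*\mathcal{R}_\Sigma\|_{\mathcal{B}_{\mathcal{H}_\Gamma}}^2+\|\mathcal{I}^*_{[\mathcal{H}_\Gamma]^3}\mathcal{R}_\Sigma\|_{\mathcal{B}_{\mathcal{H}_\Gamma}}^2+\|\mathcal{R}_U\|^2+\|\mathcal{R}_Q\|^2$, with constants depending on $\mu$ but not on $\lambda$. The two $L^2$ terms $\|\mathcal{R}_U\|=\|f-\text{div}\,\sigma_h\|$ and $\|\mathcal{R}_Q\|=\|\text{skw}\,\sigma_h\|$ are already computable and pass into the estimator unchanged, so the whole task is to bound the two dual norms, from above and from below, by the local quantities built from $R^E_1,R^E_2,J^E_1,J^E_2$. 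For this I would apply Lemma~\ref{resH1} to each of $(\nabla\times)^*\mathcal{R}_\Sigma$ and $\mathcal{I}^*_{[\mathcal{H}_\Gamma]^3}\mathcal{R}_\Sigma$, whose $L^2$ representations with exactly these element residuals and face jumps are recorded just before the statement; since $[\mathcal{H}_\Gamma]^{3\times3}$ is a Cartesian product of copies of $\mathcal{H}_\Gamma$, Lemma~\ref{resH1} applies row by row, exactly as in the $H(\operatorname{curl})$ and $H(\operatorname{div})$ instances of Theorem~\ref{Hdestres}.

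Before invoking Lemma~\ref{resH1} one must verify its hypothesis that the residual annihilates the lowest-order finite element space. The first equation of \eqref{diswe} gives the Galerkin orthogonality $\langle\mathcal{R}_\Sigma,\tau\rangle=0$ for all $\tau\in\Sigma_h=[\mathcal{V}_h^d]^3$. Hence $\langle(\nabla\times)^*\mathcal{R}_\Sigma,\xi_h\rangle=\langle\mathcal{R}_\Sigma,\nabla\times\xi_h\rangle=0$ whenever $\nabla\times\xi_h\in\Sigma_h$, which holds for $\xi_h$ in the row-wise lowest-order edge space because $\mathcal{V}_h^{1,0}\subseteq\mathcal{V}_h^c$ and $\nabla\times\mathcal{V}_h^c\subseteq\mathcal{V}_h^d$; and $\langle\mathcal{I}^*_{[\mathcal{H}_\Gamma]^3}\mathcal{R}_\Sigma,\tau_h\rangle=\langle\mathcal{R}_\Sigma,\tau_h\rangle=0$ for $\tau_h$ in the row-wise lowest-order Raviart--Thomas space $[\mathcal{V}_h^{2,0}]^3\subseteq\Sigma_h$. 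This reproduces, in the present mixed setting, the orthogonality argument used in the proof of Theorem~\ref{Hdestres}.

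With the hypothesis in hand, Lemma~\ref{resH1} yields, for $i=1,2$, the two-sided bound $\|hR^E_i\|+\|h_s^{1/2}J^E_i\|_{\mathcal{S}_h}-\text{osc}_h(\cdot)\preccurlyeq\|\cdot\|_{\mathcal{B}_{\mathcal{H}_\Gamma}}\preccurlyeq\|hR^E_i\|+\|h_s^{1/2}J^E_i\|_{\mathcal{S}_h}$, where the oscillation is $\text{osc}_h((\nabla\times)^*\mathcal{R}_\Sigma)$ for $i=1$ and $\text{osc}_h(\mathcal{I}^*_{[\mathcal{H}_\Gamma]^3}\mathcal{R}_\Sigma)$ for $i=2$, and the oscillations appear only in the lower bound. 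Squaring these, adding $\|\mathcal{R}_U\|^2+\|\mathcal{R}_Q\|^2$, inserting into \eqref{weerror}, and using the elementary equivalence $a+b\approx(a^2+b^2)^{1/2}$ to pass between the sum-of-squares form and the sum-of-norms form, gives both displayed inequalities; the constants $C_{E,1}$ and $C_{E,2}$ collect the $\lambda$-independent, $\mu$-dependent constants of \eqref{reserrorE} together with the constants hidden in $\preccurlyeq$, which depend only on $\Omega$, $\Gamma$, $C_{\text{shape}}$.

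The only genuinely nontrivial step is the middle one: matching the abstract lowest-order spaces $\mathcal{V}_h^{1,0}$ and $\mathcal{V}_h^{2,0}$ that enter the hypothesis of Lemma~\ref{resH1} with concrete subspaces of the chosen stable pair (e.g.\ Brezzi--Douglas--Marini or Raviart--Thomas--N\'ed\'elec for $\mathcal{V}_h^d$), and checking the attendant discrete de Rham inclusions; the remainder is the bookkeeping already carried out for Theorem~\ref{Hdestres}.
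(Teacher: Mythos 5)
Your proposal is correct and follows essentially the same route as the paper: the paper's proof is precisely the combination of the error--residual equivalence \eqref{weerror}, the $L^2$ representations of $(\nabla\times)^*\mathcal{R}_\Sigma$ and $\mathcal{I}^*_{[\mathcal{H}_\Gamma]^3}\mathcal{R}_\Sigma$ via element-wise integration by parts, and Lemma \ref{resH1} applied componentwise, with the Galerkin-orthogonality hypothesis checked exactly as in Theorem \ref{Hdestres}. Your explicit verification that the two residuals annihilate the lowest-order edge and Raviart--Thomas subspaces of $\Sigma_h$ is the one step the paper leaves implicit, and you carry it out correctly.
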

The same estimator could be found in \cite{LonsingVerfurth2004} under the assumption that the domain is convex.  The work \cite{Kim2011} derives an equilibrated estimator for \eqref{diswe} with guaranteed upper bound in two dimension.

Recall that $\mathcal{H}_h^1=\mathcal{H}_h^{0,1}\subset\mathcal{H}_\Gamma$ is the subspace of continuous and scalar-valued piecewise linear polynomials. Finally we present an implicit error estimator using \eqref{weerror} and a vector-valued version of Lemma \ref{esteta}. 
\begin{theorem}
Assume $\Sigma_h\supseteq[\mathcal{H}_h^1]^{3\times3}$. Then we have
\begin{align*}
    &\|\sigma-\sigma_h\|^2_\Sigma+\|u-u_h\|^2_U+\|p-p_h\|^2_Q\\
    &\approx\sum_{i=1}^N\left(\|\eta_i\|^2_{H^1(\Omega_i)}+\|\zeta_i\|^2_{H^1(\Omega_i)}\right)+\|f-\emph{div}\sigma_h\|^2+\|\emph{skw}\sigma_h\|^2,
\end{align*}
where $\eta_i\in[\mathcal{H}_i]^{3\times3}$, $\zeta_i\in[\mathcal{H}_i]^{3\times3}$ solve
\begin{align*}
    (\eta_i,\xi)+(\nabla \eta_i,\nabla\xi)&=-(A\sigma_h,\nabla\times\xi)-(\emph{skw}\nabla\times\xi,p_h),\quad\forall\xi\in[\mathcal{H}_i]^{3\times3},\\
    (\zeta_i,\tau)+(\nabla \zeta_i,\nabla\tau)&=-(A\sigma_h,\tau)-(\emph{div}\tau,u_h)-(\emph{skw}\tau,p_h),\quad\forall\tau\in[\mathcal{H}_i]^{3\times3}.
\end{align*}
\end{theorem}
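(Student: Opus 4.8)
The plan is to mimic exactly the proof of Theorem~\ref{Hodgeesteta} for the mixed Hodge Laplacian, but now on the product space $\Sigma\times U\times Q=[\mathcal V^d]^3\times[L^2(\Omega)]^3\times[L^2(\Omega)]^3$, using the decoupling~\eqref{BE} of $\mathcal B_{\Sigma\times U\times Q}$ together with the regular decomposition~\eqref{BSigma} of $\mathcal B_{\mathcal V^d}$ applied row-wise (so that $\mathcal B_\Sigma=[\mathcal B_{\mathcal V^d}]^3$ is controlled by $\nabla\times\mathcal B_{[\mathcal H_\Gamma]^{3\times3}}(\nabla\times)^*+\mathcal I\,\mathcal B_{[\mathcal H_\Gamma]^{3\times3}}\mathcal I^*$). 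Starting from the error--residual relation~\eqref{reserrorE} and~\eqref{weerror}, the true error squared is equivalent to $\|(\nabla\times)^*\mathcal R_\Sigma\|^2_{\mathcal B_{\mathcal H_\Gamma}}+\|\mathcal I^*_{[\mathcal H_\Gamma]^3}\mathcal R_\Sigma\|^2_{\mathcal B_{\mathcal H_\Gamma}}+\|\mathcal R_U\|^2+\|\mathcal R_Q\|^2$ (the $U,Q$ blocks contribute the identity operators, hence the clean $L^2$ terms $\|f-\mathrm{div}\,\sigma_h\|^2$ and $\|\mathrm{skw}\,\sigma_h\|^2$). It then remains to replace the two dual-norm terms involving $\mathcal R_\Sigma$ by the implicit local-problem quantities $\sum_i\|\eta_i\|^2_{H^1(\Omega_i)}$ and $\sum_i\|\zeta_i\|^2_{H^1(\Omega_i)}$ via Lemma~\ref{esteta}.

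To invoke Lemma~\ref{esteta} we must check its hypothesis $\langle r,v_h\rangle=0$ for all $v_h\in\mathcal H^{k,1}_h$ (in the vector-valued version, $v_h\in[\mathcal H^1_h]^{3\times3}$), with $r=(\nabla\times)^*\mathcal R_\Sigma$ and $r=\mathcal I^*_{[\mathcal H_\Gamma]^3}\mathcal R_\Sigma$ respectively. For the first, $\langle(\nabla\times)^*\mathcal R_\Sigma,\xi\rangle=\langle\mathcal R_\Sigma,\nabla\times\xi\rangle$; since $\nabla\times[\mathcal H^1_h]^{3\times3}\subseteq\nabla\times\mathcal V_h^c$-type subspaces of $\Sigma_h$ (here we use the assumption $\Sigma_h\supseteq[\mathcal H_h^1]^{3\times3}$ so that $\nabla\times\xi\in\Sigma_h$), the first equation of~\eqref{diswe} forces this pairing to vanish. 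For the second, $\langle\mathcal I^*_{[\mathcal H_\Gamma]^3}\mathcal R_\Sigma,\tau\rangle=\langle\mathcal R_\Sigma,\tau\rangle$, which vanishes for $\tau\in[\mathcal H_h^1]^{3\times3}\subseteq\Sigma_h$ again by the first equation of~\eqref{diswe}. With these two orthogonalities, the vector-valued Lemma~\ref{esteta} gives
$\langle r,\mathcal B_{[\mathcal H_\Gamma]^{3\times3}}r\rangle\simeq\sum_{i=1}^N\|w_i\|^2_{H^1(\Omega_i)}$, where $w_i$ solves the corresponding local Dirichlet problem on $[\mathcal H_i]^{3\times3}$; matching the right-hand sides against the $L^2$ representations $\langle(\nabla\times)^*\mathcal R_\Sigma,\xi\rangle=-(A\sigma_h,\nabla\times\xi)-(\mathrm{skw}\,\nabla\times\xi,p_h)$ and $\langle\mathcal I^*_{[\mathcal H_\Gamma]^3}\mathcal R_\Sigma,\tau\rangle=-(A\sigma_h,\tau)-(\mathrm{div}\,\tau,u_h)-(\mathrm{skw}\,\tau,p_h)$ identifies $w_i$ with $\eta_i$ and $\zeta_i$ in the statement. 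Summing the four pieces and using~\eqref{weerror} finishes the equivalence.

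The main obstacle is the bookkeeping around the vector-/matrix-valued version of Lemma~\ref{esteta} and verifying the orthogonality hypotheses cleanly: one needs $\nabla\times[\mathcal H_h^1]^{3\times3}\subseteq\Sigma_h$, which is exactly where the hypothesis $\Sigma_h\supseteq[\mathcal H_h^1]^{3\times3}$ is used (for BDM-type $\Sigma_h$ this inclusion holds since the curl of a continuous piecewise-linear matrix field is a piecewise-constant, hence BDM$_1$-conforming, matrix field whose rows lie in $\mathcal V_h^d$). A secondary, purely formal point is that Lemma~\ref{esteta} as stated is for the scalar space $\mathcal H^k$; since $[\mathcal H_\Gamma]^{3\times3}$ is just a Cartesian product of $9$ copies of $\mathcal H_\Gamma$, the additive-Schwarz/fictitious-space argument of Section~\ref{secHgrad} applies verbatim component-wise, so the generalization is routine. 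The $\lambda$-independence of the hidden constants is inherited from~\eqref{reserrorE} (citing~\cite{LV2004}) and from the fact that the regular decomposition~\eqref{BSigma} and Lemma~\ref{esteta} involve only $\Omega,\Gamma,C_{\mathrm{shape}}$ and the fixed weight $\mu$, not $\lambda$.
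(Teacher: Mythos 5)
Your proposal is correct and follows essentially the same route as the paper, which proves this theorem only by the one-line remark that it follows from \eqref{weerror} together with a vector-valued version of Lemma \ref{esteta}; you supply exactly the missing details (the decoupling \eqref{BE}, the identification of $\|\mathcal{R}_U\|$, $\|\mathcal{R}_Q\|$ with the two $L^2$ terms, and the verification of the orthogonality hypotheses of Lemma \ref{esteta}). Your only slightly loose phrase is the parenthetical suggesting that $\Sigma_h\supseteq[\mathcal{H}_h^1]^{3\times3}$ by itself yields $\nabla\times\xi_h\in\Sigma_h$ — it does not, since $\nabla\times\xi_h$ is only normally continuous — but you correct this yourself in the final paragraph with the right argument (the row-wise curl of a continuous piecewise linear field is piecewise constant with continuous normal traces, hence lies in the Raviart--Thomas/BDM space $[\mathcal{V}_h^d]^3=\Sigma_h$), which is consistent with the discrete-complex inclusions the paper uses elsewhere.
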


\begin{remark}
{In this section, a posteriori error estimates are derived from localized continuous Riesz representation in product form, while block diagonal preconditioners for solving saddle point systems are based on discrete product Riesz representation (cf.~\cite{MardalWinther2011}). In many practical applications, it is known that block  triangular preconditioners are superior to their diagonal counterparts. The mathematical theory of triangular preconditioners is based on the Field-of-Value (FOV) analysis (cf.~\cite{Elman1982,LoghinWathen2004,Wathen2015,Ma2016,LiZikatanovZuo2024arXiv}), which  is applicable to non-symmetric system of PDEs. Therefore, another future research direction is  relating a posteriori error analysis of linear PDEs, e.g., convection-diffusion and Helmholtz equations, with FOV preconditioning on the continuous level.}
\end{remark}

\section*{Acknowledgments}
The authors would like to thank Han Shui for the help with the numerical experiment and the anonymous referees for helpful remarks that improved the quality of this paper. The work of Li was supported by the National Natural Science Foundation of China under grant 12471346 and the Fundamental Research Funds for the Central Universities under grant 226-2023-00039. The work of Zikatanov was supported in part by the U.~S.~National Science Foundation (DMS-2208249) and the U.~S.-Norway Fulbright Foundation.

\appendix
%% \section*{Appendix}
\section*{Appendix}\setcounter{section}{1}\setcounter{equation}{0}
{\normalsize 
%%\begin{proof}[Proof of Lemma \ref{Pik}]
We construct $\Pi_h^k$ in Lemma \ref{Pik} following the same idea in \cite{Clement,ScottZhang1990}. However, we regularize $v\in H^1\Lambda^k(\Omega)$ on $n$-dimensional elements except for degrees of freedom related to $\Gamma.$ As a consequence, the interpolation is $L^2$ bounded in $\mathcal{H}^k$, which is essential for deriving error estimators for singularly perturbed problems.
\subsection*{Proof of Lemma \ref{Pik}}
Let $\{\Delta_i\}_{i=1}^M$ be the set of $k$-dimensional simplexes in $\mathcal{T}_h$. Given a sufficiently smooth $k$-form $v$, the degrees of freedom of $\mathcal{P}_1^-\Lambda^k(\mathcal{T}_h)$ consists of $\int_{\Delta_i}\text{tr}v$ with $1\leq i\leq M.$ Let $\{\phi_i\}_{i=1}^M$ be the corresponding dual basis of $\mathcal{V}_h^{k,0}.$ Each $\Delta_i$ is assigned with an $n$- or $(n-1)$-dimensional simplex $\sigma_i\supseteq\Delta_i$ in $\mathcal{T}_h.$ In particular, let $\sigma_i$ be an element in $\mathcal{T}_h$ for $\Delta_i\not\subset\Gamma$ and be a face in $\Gamma$ for $\Delta_i\subset\Gamma$. Let $Q_{\sigma_i}$ denote the $L^2$ projection onto $\mathcal{P}_0\Lambda^k(\sigma_i).$ The interpolation operator $\Pi_h^k: H^1\Lambda^k(\Omega)\rightarrow\mathcal{P}^-_1\Lambda^k(\mathcal{T}_h)$ is defined as
\begin{equation*}
    \Pi_h^kv=\sum_{i=1}^M\left(\int_{\Delta_i}\text{tr}Q_{{\sigma_i}}v\right)\phi_i.
\end{equation*}
By construction, we have for $T\in\mathcal{T}_h$,
\begin{enumerate}
    \item $v\in\mathcal{P}_0\Lambda^k(\Omega_T^k)\Longrightarrow$ $\Pi_h^kv=v$ on $T$,
    \item  $\text{tr}|_\Gamma v=0\Longrightarrow$ $\text{tr}|_\Gamma\Pi_h^kv=0$ and $\|\Pi_h^kv\|_T\preccurlyeq\|v\|_{\Omega_T^k}$.
\end{enumerate}
The approximation in Lemma \ref{approxPik} follows from the Bramble--Hilbert lemma and that $\Pi_h^k$ preserves constants locally.\qed
%%\qed\end{proof}

%%\begin{proof}[Proof of Theorem \ref{robustHodge}]
\subsection*{Proof of Theorem \ref{robustHodge}}
Using the Hodge decomposition \eqref{Hodge} and the Poincar\'e inequality \eqref{Poincare}, we have 
\begin{equation}\label{Hodgev}
    v={\rm d}_{k-1}\varphi_0+z_0, 
\end{equation}
where $\varphi_0\in\mathfrak{Z}^{k-1\perp}\subset\mathcal{V}^{k-1}$,  $z_0\in\mathfrak{H}^k\oplus\mathfrak{Z}^{k\perp}$, and 
\begin{equation}\label{bdphiz0}
\begin{aligned}
&\|\varphi_0\|\preccurlyeq\|{\rm d}_{k-1}\varphi_0\|\leq\|v\|,\\
    &\|z_0\|\leq\|v\|,\quad\|{\rm d}_kz_0\|=\|{\rm d}_kv\|.
\end{aligned}
\end{equation}

We shall extend $z_0$ to a slightly larger domain with smooth boundary.  
In fact, there exist a Lipschitz domain $\Omega^e\supset\Omega$, an extension operator $E^k: \mathcal{V}^k\rightarrow H\Lambda^k(\Omega^e)$, and an outer Lipschitz neighborhood $\Omega_\Gamma\subset\Omega^e$ of $\Gamma$ (see, e.g., \cite{GopQiu2012,Licht2019,Schoberl2008}), such that for all $v\in\mathcal{V}^k$,
\begin{equation}\label{Ek}
    \begin{aligned}
    & E^kv|_\Omega=v,\quad
    E^kv|_{\Omega_\Gamma}=0,\\
    & \|E^kv\|_{\Omega^e}\lesssim\|v\|,\quad    {\rm d}_kE^kv =E^{k+1}{\rm d}_{k}v.
\end{aligned}
\end{equation}
Without loss of generality, we assume the boundary of $\Omega^e$ is smooth by restricting $E^kv$ to a slightly smaller but smooth subdomain of $\Omega^e$.
We refer the reader to Figure~\ref{fig:extend} for sketch of the domains involved in the construction.

Now, let $\tilde{z}=E^kz_0$. It follows from \eqref{Ek} that
\begin{subequations}
\begin{align}
  &\|\tilde{z}\|_{\Omega^e}\preccurlyeq\|z_0\|,\quad\|{\rm d}\tilde{z}\|_{\Omega^e}\preccurlyeq\|{\rm d}_kz_0\|,\label{bdztilde}\\
  &\tilde{z}=0\quad\text{ on } \Omega_\Gamma.\label{ztilde0}
\end{align}
\end{subequations}
Using the Hodge decomposition on $\Omega^e$ gives that
\begin{equation}\label{Hodgeztilde}
    \tilde{z}={\rm d}_{k-1}\tilde{\varphi}+\tilde{w}\quad\text{ on }\Omega^e,
\end{equation}
where $\tilde{\varphi}\in N({\rm d}_{k-1}|_{\Omega^e})^\perp$ and $\tilde{w}\in R({\rm d}_{k-1}|_{\Omega^e})^\perp$.
Since $\partial\Omega^e$ is smooth and \eqref{qzH1}, we have
$\tilde{\varphi}\in H^1\Lambda^{k-1}(\Omega^e)$,
$\tilde{w}\in H^1\Lambda^k(\Omega^e)$, and
\begin{equation}\label{bdphiwtilde}
\begin{aligned}
    \|\tilde{\varphi}\|_{H^1(\Omega^e)}&\preccurlyeq\|\tilde{z}\|_{\Omega^e},\\
    \|\tilde{w}\|_{\Omega^e}&\preccurlyeq\|\tilde{z}\|_{\Omega^e},\\
    |\tilde{w}|_{H^1(\Omega^e)}&\preccurlyeq\|\tilde{z}\|_{\Omega^e}+\|{\rm d}_k\tilde{z}\|_{\Omega^e}.
    \end{aligned}
\end{equation}

\begin{figure}[!htb]
  \centering
\includegraphics[width=0.4\textwidth]{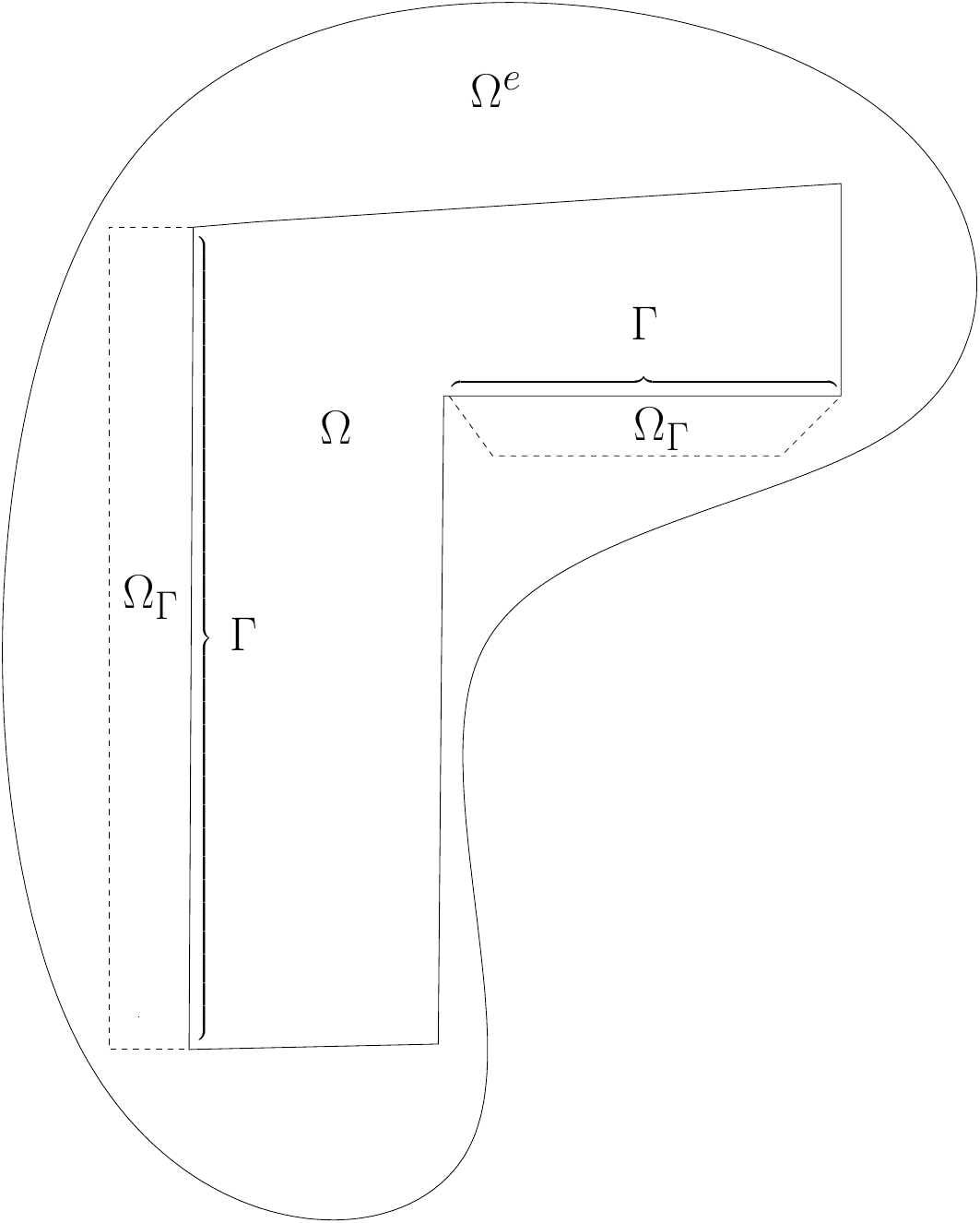}
  \caption{An extended domain $\Omega^e\supset\Omega$ and an outer neighborhood $\Omega_\Gamma$ of $\Gamma$.\label{fig:extend}}
\end{figure}

If $\Gamma=\emptyset$, then we complete the proof by taking $\varphi=\varphi_0+\tilde{\varphi}|_\Omega\in\mathcal{V}^{k-1}$ and $z=\tilde{z}|_\Omega\in\mathcal{H}^k$. In general, $\tilde{\varphi}$, $\tilde{w}$ must be modified to satisfy the homogeneous boundary condition on $\Gamma.$ Using \eqref{ztilde0}, it follows that ${\rm d}_{k-1}\tilde{\varphi}=-\tilde{w}\in H^1\Lambda^k(\Omega_\Gamma)$ in the subdomain $\Omega_\Gamma$. Then by the universal extension theorem in \cite{HipLiZou2012}, we can extend {$\tilde{\varphi}|_{\Omega_\Gamma}\in H^1\Lambda^{k-1}(\Omega_\Gamma)$} with ${\rm d}_{k-1}\tilde{\varphi}|_{\Omega_\Gamma}\in H^1\Lambda^k(\Omega_\Gamma)$ to obtain $\bar{\varphi}\in H^1\Lambda^{k-1}(\mathbb{R}^n)$ with {${\rm d}_{k-1}\bar{\varphi}\in H^1\Lambda^k(\mathbb{R}^n)$}, and
\begin{equation}\label{extphibar}
\begin{aligned}
    \|\bar{\varphi}\|+\|{\rm d}_{k-1}\bar{\varphi}\|&\preccurlyeq\|\tilde{\varphi}\|_{\Omega_\Gamma}+\|\tilde{w}\|_{\Omega_\Gamma},\\
    \|\bar{\varphi}\|_{H^1}+\|{\rm d}_{k-1}\bar{\varphi}\|_{H^1}&\preccurlyeq\|\tilde{\varphi}\|_{H^1(\Omega_\Gamma)}+\|\tilde{w}\|_{H^1(\Omega_\Gamma)}.
    \end{aligned}
\end{equation}
Using \eqref{Hodgev} and \eqref{Hodgeztilde}, we rewrite  $v$ for  $x\in \Omega$ as
\begin{equation}\label{almost}
    v=
    {\rm d}_{k-1}\varphi_1+z,
\end{equation}
where $\varphi_1=\varphi_0+\tilde{\varphi}|_\Omega-\bar{\varphi}|_\Omega\in\mathcal{V}^{k-1}$, $z={\rm d}_{k-1}\bar{\varphi}|_\Omega+\tilde{w}|_\Omega\in\mathcal{H}^k$. Here we have $z=0$ on $\Gamma$ because ${\rm d}_{k-1}\tilde{\varphi}+\tilde{w}=0$ on $\Omega_\Gamma$, $\Gamma\subset\partial\Omega_\Gamma$, and ${\rm d}_{k-1}\bar{\varphi}={\rm d}_{k-1}\tilde{\varphi}$ on $\Gamma$.
Collecting previous bounds \eqref{bdphiz0},  \eqref{bdztilde},  \eqref{bdphiwtilde}, \eqref{extphibar} then shows that
\begin{equation*}
\begin{aligned}
    &\|\varphi_1\|_{\mathcal{V}^{k-1}}+\|z\|\preccurlyeq\|v\|,\quad\\
    &|z|_{H^1}\preccurlyeq\|v\|+\|{\rm d}_kv\|. 
    \end{aligned}
\end{equation*}
Finally, the regular decomposition of $\varphi_1$ implies that there
exists $\varphi\in\mathcal{H}^{k-1}$ satisfying
\begin{equation*}
    {\rm d}_{k-1}\varphi={\rm d}_{k-1}\varphi_1,\quad\|\varphi\|_{H^1}\preccurlyeq\|{\rm d}_{k-1}\varphi_1\|\preccurlyeq\|v\|.
\end{equation*}
Replacing $\varphi_1$ with $\varphi$ in \eqref{almost} completes the proof.
\qed
%% \end{proof}

\bigskip
If the space of harmonic forms $\mathfrak{H}^k=\{0\}$ is trivial, the component $z$ in Theorem \ref{robustHodge} can be chosen such that
\begin{equation*}
    \|z\|\preccurlyeq\|v\|,\quad|z|_{H^1}\preccurlyeq\|{\rm d}_kv\|. 
\end{equation*}
In the previous proof for
$z_0\in\mathfrak{H}^k\oplus\mathfrak{Z}^{k\perp}$, let us consider the
decomposition $z_0=q+w_0$ where $q\in\mathfrak{H}^k$,
$w_0\in\mathfrak{Z}^{k\perp}$. If, in the arguments in this proof, we replace $z_0$ with $w_0$, then we arrive at the following
corollary, which might be useful for the analysis of other methods and
techniques.
\begin{corollary}
For any $v\in\mathcal{V}^k$, there exist $z\in \mathcal{H}^k$ such that 
\begin{equation*}
    {\rm d}_kv={\rm d}_kz,\quad
    \|z\|\preccurlyeq\|v\|,\quad\mbox{and}\quad    |z|_{H^1}\preccurlyeq \|{\rm d}_kv\|.
\end{equation*}
% \begin{align*}
%     d_kv&=d_kz,\\
%     \|z\|&\preccurlyeq\|v\|,\\
%     |z|_{H^1}&\preccurlyeq \|d_kv\|.
% \end{align*}
\end{corollary}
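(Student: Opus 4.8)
The plan is to re-run the proof of Theorem~\ref{robustHodge}, feeding in the $L^2$-orthogonal complement part of $z_0$ rather than $z_0$ itself, and discarding the harmonic part. Starting from the Hodge decomposition \eqref{Hodgev}, namely $v=d_{k-1}\varphi_0+z_0$ with $z_0\in\mathfrak{H}^k\oplus\mathfrak{Z}^{k\perp}$, I would write $z_0=q+w_0$ with $q\in\mathfrak{H}^k$ and $w_0\in\mathfrak{Z}^{k\perp}$. Since $d_kq=0$ and $d_kd_{k-1}=0$, we have $d_kv=d_kz_0=d_kw_0$. Two estimates on $w_0$ are then available: first $\|w_0\|\le\|z_0\|\le\|v\|$, because the splitting $z_0=q+w_0$ is $L^2$-orthogonal and $\|z_0\|\le\|v\|$ by \eqref{bdphiz0}; second, the Poincar\'e inequality \eqref{Poincare} on $\mathfrak{Z}^{k\perp}$ gives $\|w_0\|\le c_P\|d_kw_0\|=c_P\|d_kv\|$.

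Next I would apply the extension--correction machinery of the proof of Theorem~\ref{robustHodge} verbatim, with $w_0$ in the role of $z_0$: set $\tilde z=E^kw_0$, use \eqref{Ek} to obtain the analogues of \eqref{bdztilde}--\eqref{ztilde0}, Hodge-decompose $\tilde z=d_{k-1}\tilde\varphi+\tilde w$ on the smooth extended domain $\Omega^e$ to get $\tilde\varphi\in H^1\Lambda^{k-1}(\Omega^e)$, $\tilde w\in H^1\Lambda^k(\Omega^e)$ with the bounds \eqref{bdphiwtilde}, use $\tilde z=0$ on $\Omega_\Gamma$ together with the universal extension theorem of \cite{HipLiZou2012} to extend $\tilde\varphi|_{\Omega_\Gamma}$ to $\bar\varphi$ with the estimates \eqref{extphibar}, and finally set $z:=d_{k-1}\bar\varphi|_\Omega+\tilde w|_\Omega\in\mathcal{H}^k$. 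As in that proof, $z=0$ on $\Gamma$ because $d_{k-1}\tilde\varphi+\tilde w=0$ on $\Omega_\Gamma\supset\Gamma$ and $d_{k-1}\bar\varphi=d_{k-1}\tilde\varphi$ there; and $d_kz=d_k\tilde w|_\Omega=(d_k\tilde z)|_\Omega=d_kw_0=d_kv$, which is the claimed identity.

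It remains to assemble the norm estimates. Chaining \eqref{bdztilde}, \eqref{bdphiwtilde}, \eqref{extphibar} exactly as in the proof of Theorem~\ref{robustHodge} yields $\|z\|\preccurlyeq\|w_0\|$ and $|z|_{H^1}\preccurlyeq\|w_0\|+\|d_kw_0\|$. For the $L^2$ bound I would invoke $\|w_0\|\le\|v\|$ to conclude $\|z\|\preccurlyeq\|v\|$; for the $H^1$ bound I would instead invoke the Poincar\'e estimate $\|w_0\|\preccurlyeq\|d_kw_0\|=\|d_kv\|$, so that $|z|_{H^1}\preccurlyeq\|d_kv\|$. This completes the argument.

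The proof is essentially bookkeeping: the only new input relative to Theorem~\ref{robustHodge} is the observation that the $\mathfrak{Z}^{k\perp}$-component $w_0$ enjoys the Poincar\'e bound $\|w_0\|\preccurlyeq\|d_kv\|$, which upgrades the $H^1$-seminorm control from $\|v\|+\|d_kv\|$ to $\|d_kv\|$ alone, while the harmonic part $q$ is simply thrown away, so no triviality assumption on $\mathfrak{H}^k$ is needed. I do not expect a genuine obstacle; the one place requiring care is the boundary-correction step via \cite{HipLiZou2012}, but it relies only on $\tilde z=0$ on $\Omega_\Gamma$, a property $w_0$ inherits from $z_0$ through the extension operator $E^k$, so it carries over unchanged.
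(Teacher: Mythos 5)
Your proposal is correct and follows exactly the route the paper indicates: the paper's own justification of this corollary is precisely to take the splitting $z_0=q+w_0$ with $q\in\mathfrak{H}^k$, $w_0\in\mathfrak{Z}^{k\perp}$ and rerun the proof of Theorem~\ref{robustHodge} with $w_0$ in place of $z_0$, the Poincar\'e inequality \eqref{Poincare} supplying the upgrade $\|w_0\|\preccurlyeq\|d_kv\|$ that turns the $H^1$-seminorm bound into $|z|_{H^1}\preccurlyeq\|d_kv\|$. Your bookkeeping of the extension--correction steps and of the identity $d_kz=d_kw_0=d_kv$ is accurate, so there is nothing to add.
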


\begin{remark}
Unlike the pure Dirichlet ($\Gamma=\partial\Omega$) or Neumann ($\Gamma=\emptyset$) boundary condition, the structure of $\mathfrak{H}^k$ under the mixed boundary condition is dependent on the relative homology of the pair $(\Omega,\Gamma)$. It turns out that $\mathfrak{H}^k$ could be nontrivial even if $\Omega$ is star-shaped; see, e.g., \cite{GolMM2011,Licht2019}.
\end{remark}
}

\bibliographystyle{amsplain}   

\begin{thebibliography}{10}

\bibitem{AinsworthBabuska1999}
Mark Ainsworth and Ivo Babu\v{s}ka, \emph{Reliable and robust a posteriori error estimating for singularly perturbed reaction-diffusion problems}, SIAM J. Numer. Anal. \textbf{36} (1999), no.~2, 331--353. \MR{1668250}

\bibitem{AinsworthOden2000}
Mark Ainsworth and J.~Tinsley Oden, \emph{A posteriori error estimation in finite element analysis}, Pure and Applied Mathematics (New York), Wiley-Interscience [John Wiley \& Sons], New York, 2000. \MR{1885308}

\bibitem{Alonso1996}
A.~Alonso, \emph{Error estimators for a mixed method}, Numer. Math. \textbf{74} (1996), no.~4, 385--395.

\bibitem{ABDG1998}
C.~Amrouche, Christine. Bernardi, M.~Dauge, and V.~Girault, \emph{Vector potentials in three-dimensional non-smooth domains}, Math. Methods Appl. Sci. \textbf{21} (1998), no.~9, 823--864. \MR{1626990}

\bibitem{ABD1984}
Douglas~N. Arnold, Franco Brezzi, and Jim Douglas, Jr., \emph{P{EERS}: a new mixed finite element for plane elasticity}, Japan J. Appl. Math. \textbf{1} (1984), no.~2, 347--367. \MR{840802}

\bibitem{ArnoldFalkWinther2000}
Douglas~N. Arnold, Richard~S. Falk, and Ragnar Winther, \emph{Multigrid in {H}(\text{div}) and {H}(\text{curl})}, Numer. Math. \textbf{85} (2000), no.~2, 197--217.

\bibitem{ArnoldFalkWinther2006}
\bysame, \emph{Finite element exterior calculus, homological techniques, and applications}, Acta Numer. \textbf{15} (2006), 1--155.

\bibitem{ArnoldFalkWinther2010}
\bysame, \emph{Finite element exterior calculus: from {H}odge theory to numerical stability}, Bull. Amer. Math. Soc. (N.S.) \textbf{47} (2010), no.~2, 281--354. \MR{2594630}

\bibitem{BabuskaRheinboldt1978}
I.~Babu\v{s}ka and W.~C. Rheinboldt, \emph{Error estimates for adaptive finite element computations}, SIAM J. Numer. Anal. \textbf{15} (1978), no.~4, 736--754. \MR{483395}

\bibitem{BabuskaStrouboulis2001}
Ivo Babu\v{s}ka and Theofanis Strouboulis, \emph{The finite element method and its reliability}, Numerical Mathematics and Scientific Computation, The Clarendon Press, Oxford University Press, New York, 2001. \MR{1857191}

\bibitem{BankWeiser1985}
R.~E. Bank and A.~Weiser, \emph{Some a posteriori error estimators for elliptic partial differential equations}, Math. Comp. \textbf{44} (1985), no.~170, 283--301. \MR{777265}

\bibitem{Bank1996}
Randolph~E. Bank, \emph{Hierarchical bases and the finite element method}, Acta numerica, 1996, Acta Numer., vol.~5, Cambridge Univ. Press, Cambridge, 1996, pp.~1--43. \MR{1624587}

\bibitem{BankXu2003b}
Randolph~E. Bank and Jinchao Xu, \emph{Asymptotically exact a posteriori error estimators. {II}. {G}eneral unstructured grids}, SIAM J. Numer. Anal. \textbf{41} (2003), no.~6, 2313--2332. \MR{2034617}

\bibitem{BHHW2000}
Rudi Beck, Ralf Hiptmair, Ronald H.~W. Hoppe, and Barbara Wohlmuth, \emph{Residual based a posteriori error estimators for eddy current computation}, M2AN Math. Model. Numer. Anal. \textbf{34} (2000), no.~1, 159--182. \MR{1735971}

\bibitem{BS1987}
M.~Sh. Birman and M.~Z. Solomyak, \emph{{$L_2$}-theory of the {M}axwell operator in arbitrary domains}, Uspekhi Mat. Nauk \textbf{42} (1987), no.~6(258), 61--76, 247. \MR{933995}

\bibitem{BV1996}
D.~Braess and R.~Verf\"urth, \emph{A posteriori error estimators for the {R}aviart--{T}homas element}, SIAM J. Numer. Anal. \textbf{33} (1996), no.~6, 2431--2444.

\bibitem{BraessSchoberl2008}
Dietrich Braess and Joachim Sch\"oberl, \emph{Equilibrated residual error estimator for edge elements}, Math. Comp. \textbf{77} (2008), no.~262, 651--672.

\bibitem{BrezziDouglasMarini1985}
Franco Brezzi, Jim {Douglas Jr.}, and L.~D. Marini, \emph{Two families of mixed finite elements for second order elliptic problems}, Numer. Math. \textbf{2} (1985), no.~47, 217--235.

\bibitem{Carstensen2005}
C.~Carstensen, \emph{A unifying theory of a posteriori finite element error control}, Numer. Math. \textbf{100} (2005), no.~4, 617--637. \MR{2194587}

\bibitem{Carstensen1997}
Carsten Carstensen, \emph{A posteriori error estimate for the mixed finite element method}, Math. Comp. \textbf{66} (1997), no.~218, 465--476.

\bibitem{CNS2007}
J.~Manuel Cascon, Ricardo~H. Nochetto, and Kunibert~G. Siebert, \emph{Design and convergence of {AFEM} in {$H({\rm div})$}}, Math. Models Methods Appl. Sci. \textbf{17} (2007), no.~11, 1849--1881. \MR{2372340}

\bibitem{ChenWu2017}
Long Chen and Yongke Wu, \emph{Convergence of adaptive mixed finite element methods for the {H}odge {L}aplacian equation: without harmonic forms}, SIAM J. Numer. Anal. \textbf{55} (2017), no.~6, 2905--2929.

\bibitem{Clement}
Ph. Cl\'{e}ment, \emph{Approximation by finite element functions using local regularization}, Rev. Fran\c{c}aise Automat. Informat. Recherche Op\'{e}rationnelle S\'{e}r. \textbf{9} (1975), no.~{\rm R}-2, 77--84. \MR{0400739}

\bibitem{DemlowHirani2014}
Alan Demlow and Anil~N. Hirani, \emph{A posteriori error estimates for finite element exterior calculus: the de {R}ham complex}, Found. Comput. Math. \textbf{14} (2014), no.~6, 1337--1371.

\bibitem{DHL1999}
Anne-Sophie Bonnet-Ben Dhia, Christophe Hazard, and Stephanie Lohrengel, \emph{A singular field method for the solution of {M}axwell's equations in polyhedral domains}, SIAM J. Appl. Math. \textbf{59} (1999), no.~6, 2028--2044. \MR{1709795}

\bibitem{Elman1982}
Howard Elman, \emph{Iterative methods for large sparse non-symmetric systems of linear equations}, Ph.D. thesis, Yale University, 1982.

\bibitem{ErnVohralik2015}
Alexandre Ern and Martin Vohral\'ik, \emph{Polynomial-degree-robust a posteriori estimates in a unified setting for conforming, nonconforming, discontinuous galerkin, and mixed discretizations}, SIAM J. Numer. Anal. \textbf{53} (2015), no.~2, 1058--1081.

\bibitem{Gaffney1951}
Matthew~P. Gaffney, \emph{The harmonic operator for exterior differential forms}, Proc. Nat. Acad. Sci. U.S.A. \textbf{37} (1951), 48--50. \MR{48138}

\bibitem{GolMM2011}
V.~Gol'dshtein, I.~Mitrea, and M.~Mitrea, \emph{Hodge decompositions with mixed boundary conditions and applications to partial differential equations on {L}ipschitz manifolds}, J. Math. Sci. (N.Y.) \textbf{172} (2011), no.~3, 347--400. \MR{2839867}

\bibitem{GopQiu2012}
Jay Gopalakrishnan and Weifeng Qiu, \emph{Partial expansion of a {L}ipschitz domain and some applications}, Front. Math. China \textbf{7} (2012), no.~2, 249--272. \MR{2897704}

\bibitem{GriebelOswald1995}
Michael Griebel and Peter Oswald, \emph{On the abstract theory of additive and multiplicative {S}chwarz algorithms}, Numer. Math. \textbf{70} (1995), no.~2, 163--180. \MR{1324736}

\bibitem{Hiptmair1999MCOM}
R.~Hiptmair, \emph{Canonical construction of finite elements}, Math. Comp. \textbf{68} (1999), no.~228, 1325--1346.

\bibitem{Hiptmair2002}
Ralf Hiptmair, \emph{Finite elements in computational electromagnetism}, Acta Numer. \textbf{11} (2002), 237--339. \MR{2009375}

\bibitem{HipLiZou2012}
Ralf Hiptmair, Jingzhi Li, and Jun Zou, \emph{Universal extension for {S}obolev spaces of differential forms and applications}, J. Funct. Anal. \textbf{263} (2012), no.~2, 364--382. \MR{2923416}

\bibitem{HiptmairXu2007}
Ralf Hiptmair and Jinchao Xu, \emph{Nodal auxiliary space preconditioning in {${\bf H}({\bf curl})$} and {${\bf H}({\rm div})$} spaces}, SIAM J. Numer. Anal. \textbf{45} (2007), no.~6, 2483--2509. \MR{2361899}

\bibitem{HolstLiMihalikSzypowski2020}
Michael Holst, Yuwen Li, Adam Mihalik, and Ryan Szypowski, \emph{Convergence and optimality of adaptive mixed methods for {P}oisson's equation in the {FEEC} framework}, J. Comput. Math. \textbf{38} (2020), no.~5, 748--767.

\bibitem{HuangXu2012}
Jianguo Huang and Yifeng Xu, \emph{Convergence and complexity of arbitrary order adaptive mixed element methods for the poisson equation}, Sci. China Math. \textbf{55} (2012), no.~5, 1083--1098.

\bibitem{IHV2008}
Ferenc Izs\'{a}k, Davit Harutyunyan, and Jaap J.~W. van~der Vegt, \emph{Implicit a posteriori error estimates for the {M}axwell equations}, Math. Comp. \textbf{77} (2008), no.~263, 1355--1386. \MR{2398772}

\bibitem{Kim2011}
Kwang-Yeon Kim, \emph{Guaranteed a posteriori error estimator for mixed finite element methods of linear elasticity with weak stress symmetry}, SIAM J. Numer. Anal. \textbf{49} (2011), no.~6, 2364--2385. \MR{2854600}

\bibitem{Li2019SINUM}
Yuwen Li, \emph{Some convergence and optimality results of adaptive mixed methods in finite element exterior calculus}, SIAM J. Numer. Anal. \textbf{57} (2019), no.~4, 2019--2042. \MR{3995302}

\bibitem{Li2024FoCM}
\bysame, \emph{Nodal auxiliary space preconditioning for the surface de {R}ham complex}, Found. Comput. Math. \textbf{24} (2024), 1019--1048.

\bibitem{LiZikatanov2021CAMWA}
Yuwen Li and Ludmil Zikatanov, \emph{A posteriori error estimates of finite element methods by preconditioning}, Comput. Math. Appl. \textbf{91} (2021), 192--201. \MR{4253884}

\bibitem{LiZikatanovZuo2024arXiv}
Yuwen Li, Ludmil Zikatanov, and Cheng Zuo, \emph{Reduced {K}rylov basis methods for parametric partial differential equations}, to appear in SIAM J. Numer. Anal. (2024), arXiv: 2405.07139.

\bibitem{Licht2019}
Martin~W. Licht, \emph{Smoothed projections and mixed boundary conditions}, Math. Comp. \textbf{88} (2019), no.~316, 607--635. \MR{3882278}

\bibitem{LoghinWathen2004}
D.~Loghin and A.~J. Wathen, \emph{Analysis of preconditioners for saddle-point problems}, SIAM J. Sci. Comput. \textbf{25} (2004), no.~6, 2029--2049. \MR{2086829}

\bibitem{LonsingVerfurth2004}
Marco Lonsing and R\"{u}diger Verf\"{u}rth, \emph{A posteriori error estimators for mixed finite element methods in linear elasticity}, Numer. Math. \textbf{97} (2004), no.~4, 757--778. \MR{2127931}

\bibitem{Ma2016}
Yicong Ma, \emph{Fast solvers for incompressible {MHD} systems}, Ph.D. thesis, The Pennsylvania State University, 2016.

\bibitem{MardalWinther2011}
Kent-Andre Mardal and Ragnar Winther, \emph{Preconditioning discretizations of systems of partial differential equations}, Numer. Linear Algebra Appl. \textbf{18} (2011), no.~1, 1--40. \MR{2769031}

\bibitem{Monk2003}
Peter Monk, \emph{Finite element methods for {M}axwell's equations}, Numerical Mathematics and Scientific Computation, Oxford University Press, New York, 2003.

\bibitem{MNS2003}
Pedro Morin, Ricardo~H. Nochetto, and Kunibert~G. Siebert, \emph{Local problems on stars: a posteriori error estimators, convergence, and performance}, Math. Comp. \textbf{72} (2003), no.~243, 1067--1097. \MR{1972728}

\bibitem{Nedelec1980}
J.-C. N\'ed\'elec, \emph{Mixed finite elements in $\mathbf{R}^3$}, Numer. Math. \textbf{35} (1980), no.~3, 315--341.

\bibitem{Nedelec1986}
\bysame, \emph{A new family of mixed finite elements in $\mathbf{R}^3$}, Numer. Math. \textbf{50} (1986), no.~1, 57--81.

\bibitem{Nepomnyaschikh1992}
S.~V. Nepomnyaschikh, \emph{Decomposition and fictitious domains methods for elliptic boundary value problems}, Fifth {I}nternational {S}ymposium on {D}omain {D}ecomposition {M}ethods for {P}artial {D}ifferential {E}quations ({N}orfolk, {VA}, 1991), SIAM, Philadelphia, PA, 1992, pp.~62--72. \MR{1189564}

\bibitem{NochettoSiebertVeeser2009}
Ricardo~H. Nochetto, Kunibert~G. Siebert, and Andreas Veeser, \emph{Theory of adaptive finite element methods: an introduction}, pp.~409--542, Springer, Berlin, 2009.

\bibitem{PasciakZhao2002}
J.~E. Pasciak and J.~Zhao, \emph{Overlapping schwarz methods in {H(curl)} on polyhedral domains}, J. Numer. Math. \textbf{10} (2002), no.~3, 221--234.

\bibitem{RZ2005}
W.~Rachowicz and A.~Zdunek, \emph{An {$hp$}-adaptive finite element method for scattering problems in computational electromagnetics}, Internat. J. Numer. Methods Engrg. \textbf{62} (2005), no.~9, 1226--1249. \MR{2120293}

\bibitem{RaviartThomas1977}
P.-A. Raviart and J.~M. Thomas, \emph{A mixed finite element method for 2nd order elliptic problems}, Mathematical aspects of finite element methods ({R}ome), ({P}roc. {C}onf., {C}onsiglio {N}az. delle {R}icerche ({C}.{N}.{R}.), 1977, pp.~292--315. Lecture Notes in Math., Vol. 606. \MR{0483555}

\bibitem{Repin2008}
Sergey Repin, \emph{A posteriori estimates for partial differential equations}, Radon Series on Computational and Applied Mathematics, vol.~4, Walter de Gruyter GmbH \& Co. KG, Berlin, 2008. \MR{2458008}

\bibitem{Schoberl2008}
Joachim Sch\"{o}berl, \emph{A posteriori error estimates for {M}axwell equations}, Math. Comp. \textbf{77} (2008), no.~262, 633--649. \MR{2373173}

\bibitem{ScottZhang1990}
L.~Ridgway Scott and Shangyou Zhang, \emph{Finite element interpolation of nonsmooth functions satisfying boundary conditions}, Math. Comp. \textbf{54} (1990), no.~190, 483--493.

\bibitem{SmearsVohralik2020}
Iain Smears and Martin Vohral\'ik, \emph{Simple and robust equilibrated flux a posteriori estimates for singularly perturbed reaction-diffusion problems}, ESAIM: M2AN \textbf{56} (2020), no.~6, 1951--1973.

\bibitem{Stenberg1988}
Rolf Stenberg, \emph{A family of mixed finite elements for the elasticity problem}, Numer. Math. \textbf{53} (1988), no.~5, 513--538. \MR{954768}

\bibitem{ToselliWidlund2005}
Andrea Toselli and Olof Widlund, \emph{Domain decomposition methods---algorithms and theory}, Springer Series in Computational Mathematics, vol.~34, Springer-Verlag, Berlin, 2005.

\bibitem{Verfurth1996}
R\"udiger Verf\"{u}rth, \emph{A review of a posteriori error estimation and adaptive mesh-refinement techniques}, advances in numerical mathematics, Wiley-Teubner, 1996.

\bibitem{Verfurth2013}
R\"{u}diger Verf\"{u}rth, \emph{A posteriori error estimation techniques for finite element methods}, Numerical Mathematics and Scientific Computation, Oxford University Press, Oxford, 2013. \MR{3059294}

\bibitem{Wathen2015}
A.~J. Wathen, \emph{Preconditioning}, Acta Numer. \textbf{24} (2015), 329--376. \MR{3349311}

\bibitem{Xu1992}
Jinchao Xu, \emph{Iterative methods by space decomposition and subspace correction}, SIAM Rev. \textbf{34} (1992), no.~4, 581--613. \MR{1193013}

\bibitem{XuZikatanov2002}
Jinchao Xu and Ludmil Zikatanov, \emph{The method of alternating projections and the method of subspace corrections in {H}ilbert space}, J. Amer. Math. Soc. \textbf{15} (2002), no.~3, 573--597. \MR{1896233}

\end{thebibliography}

\providecommand{\bysame}{\leavevmode\hbox to3em{\hrulefill}\thinspace}
\providecommand{\MR}{\relax\ifhmode\unskip\space\fi MR }
% \MRhref is called by the amsart/book/proc definition of \MR.
\providecommand{\MRhref}[2]{%
  \href{http://www.ams.org/mathscinet-getitem?mr=#1}{#2}
}
\providecommand{\href}[2]{#2}

\end{document}